\newcommand{\E}{{\mathbb{E}}}
\newcommand{\bqan}{\begin{eqnarray}}
\newcommand{\eqan}{\end{eqnarray}}
\newcommand{\vepsilon}{\vep}
\newcommand{\diag}{\operatorname{diag}}
\newcommand{\tr}{\operatorname{tr}}
\newcommand{\va}{\boldsymbol{a}}
\newcommand{\vb}{\boldsymbol{b}}
\newcommand{\ve}{\boldsymbol{e}}
\newcommand{\vf}{\boldsymbol{f}}
\newcommand{\vh}{\boldsymbol{h}}
\newcommand{\vr}{\boldsymbol{r}}
\newcommand{\vx}{\boldsymbol{x}}
\newcommand{\vv}{\boldsymbol{v}}
\newcommand{\vA}{\boldsymbol{A}}
\newcommand{\vC}{\boldsymbol{C}}
\newcommand{\vE}{\boldsymbol{E}}
\newcommand{\vH}{\boldsymbol{H}}
\newcommand{\vI}{\boldsymbol{I}}
\newcommand{\vJ}{\boldsymbol{J}}
\newcommand{\vL}{\boldsymbol{L}}
\newcommand{\vM}{\boldsymbol{M}}
\newcommand{\vN}{\boldsymbol{N}}
\newcommand{\vP}{\boldsymbol{P}}
\newcommand{\vR}{\boldsymbol{R}}
\newcommand{\vT}{\boldsymbol{T}}
\newcommand{\vU}{\boldsymbol{U}}
\newcommand{\vV}{\boldsymbol{V}}
\newcommand{\vX}{\boldsymbol{X}}
\newcommand{\vY}{\boldsymbol{Y}}
\newcommand{\vZ}{\boldsymbol{Z}}
\newcommand{\vGamma}{\boldsymbol{\Gamma}}
\newcommand{\vep}{\boldsymbol{\epsilon}}
\newcommand{\vmu}{\boldsymbol{\mu}}
\newcommand{\vSigma}{\boldsymbol{\Sigma}}
\newcommand{\vDelta}{\boldsymbol{\Delta}}
\newcommand{\vLambda}{\boldsymbol{\Lambda}}
\newcommand{\vzeta}{\boldsymbol{\zeta}}
\newcommand{\veins}{{\bf 1}}
\newcommand{\vnull}{{\bf 0}}
\newcommand{\vUpsilon}{\boldsymbol{\Upsilon}}
\newcommand{\To}{\longrightarrow}            
\newcommand{\ind}{1\hspace{-0.7ex}1}
\newcommand{\Cdot}{\cdot}
\DeclareMathOperator{\rank}{rank}
\DeclareMathOperator{\Cov}{Cov}
\DeclareMathOperator{\Corr}{Corr}
\DeclareMathOperator{\Var}{Var}
\DeclareMathOperator{\vech}{vech}
\newtheoremstyle{Test1}
  {2 \baselineskip}
  {1.5 \baselineskip}
  {\itshape}
  {-0.0ex}
  {\fontfamily{ppl}\fontseries{l}\fontshape{n}}
  {:}
  {\newline}
   {}
\theoremstyle{Test1}
\newtheorem{Sa}{Theorem}[section]
\newtheorem{theorem}{Theorem}[section]
\newtheorem{re}[Sa]{Remark}
\newtheorem{Le}[Sa]{Lemma}
\newtheorem{Ko}[Sa]{Corollary}
\newcommand{\Lan}{\mathcal{O}}
\newcommand{\lan}{ \scriptstyle \mathcal{O}\textstyle}
\newcolumntype{x}[1]{!{\centering\arraybackslash\vrule width #1}}
\begin{document}

\title{Testing  Hypotheses about Correlation Matrices in General MANOVA Designs}
\author*[1]{\fnm{Paavo} \sur{Sattler}}\email{paavo.sattler@tu-dortmund.de}
\author[2]{\fnm{Markus} \sur{Pauly}}\email{markus.pauly@tu-dortmund.de}
\affil[1]{Institute for Mathematical Statistics and Industrial Applications, Department of Statistics, TU Dortmund University, Joseph-von-Fraunhofer-Straße 2-4, 44221 Dortmund, Germany}
\affil[2]{Institute for Mathematical Statistics and Industrial Applications, Department of Statistics, TU Dortmund University, Joseph-von-Fraunhofer-Straße 2-4, 44221 Dortmund, Germany; {Research Center Trustworthy Data Science and Security, UA Ruhr, Joseph-von-Fraunhofer-Straße 25}}

%

%

\abstract{
Correlation matrices are an essential tool for investigating the dependency structures of random vectors or comparing them. We introduce an approach for testing a variety of null hypotheses that can be formulated based upon the correlation matrix. Examples cover 
MANOVA-type hypothesis of equal correlation matrices as well as testing 
for special correlation structures such as, e.g., sphericity. Apart from existing fourth moments, our approach requires  no other assumptions, allowing applications in various settings. To improve the
small sample performance, a bootstrap technique is proposed and theoretically justified. {Based on this, we also present a procedure to simultaneously test the hypotheses of equal correlation and equal covariance matrices. The performance of all new} test statistics is compared with existing procedures through extensive simulations.}

\keywords{ Bootstrap, Multivariate Data,  Nonparametric Testing, Resampling, Correlation Matrices, Quadratic Forms.
}

\maketitle
\section{\protect{Motivation and Introduction}}\label{int}
Covariance matrices contain 
a multitude of information about a random vector. Therefore, they were the topic of manifold investigations.
For testing, an important hypothesis is the equality of covariance matrices from different groups. This was, e.g., investigated in \cite{bartlett1953} as well as \cite{boos2004}. Moreover, \cite{gupta2006} proposed tests for a given covariance matrix. 
Extending on both, \cite{sattler2022} proposed a unifying framework that allows for investigations of various hypotheses about the covariance. Additional examples, e.g., cover 
testing equality of traces or comparing diagonal elements of the covariance matrix.
However, covariance matrices are not scale-invariant. This entails some disadvantages using them in the analysis of random vectors' dependency structure. For example, a simple change of a measuring unit can completely change the matrix. For this reason alone, it is more useful to consider the correlation matrix instead when inferring dependency structures.\\
This already starts in the bivariate case, i.e. the investigation of correlations.
{For ordinal data, rank-based correlation measures are most common. For example, \cite{perreault2022} provides an approach for general hypotheses testing 
while \cite{nowak2021} focused on simultaneous confidence intervals and multiple contrast tests for Kendall's $\tau$. Spearman's rank-correlation coefficient  $\rho$ was investigated in \cite{gaißer2010} for the hypothesis of the correlation matrix being an equicorrelation matrix.
For metric data, the most common measure of correlation is the Pearson correlation coefficient.
} Here, 
tests and confidence intervals for investigating or comparing correlation coefficients have been discussed for the case of one, two or multiple group(s), see (\cite{fisher1921}; \cite{efron1988}, \cite{sakaori2002}, \cite{gupta2006},  \cite{tian2008}, \cite{omPa:2012}, \cite{welz2022}) and the references cited therein.
For larger dimensions, equality of correlation matrices was investigated, for example, in \cite{jennrich1970}. Moreover, different hypotheses regarding the structure of correlation matrices were treated in   \cite{joereskog}, \cite{steiger1980} and \cite{wu2018}.
However, the above approaches usually require strong prerequisites on the distribution (such as multivariate normal distribution or particular properties of the moments), the components or the setting (such as bivariate or special structures). Moreover, most can be used only for a few specific hypotheses.
Thus, to obtain an approach with fewer assumptions, which is at the same time applicable for a multitude of hypotheses, we expand the approach of \cite{sattler2022} to the treatment of correlation matrices.\\
In the following section, the statistical model will be introduced together with examples of different null hypotheses that can be investigated using the proposed approach. Afterwards, the asymptotic distributions of the proposed test statistics are derived (Section~\ref{Asymptotics regarding the vectorized correlation}). In Section~\ref{Resampling Procedures} and \ref{Taylor}, a resampling strategy {and a Taylor-based Monte-Carlo approach are} used to generate critical values and improve our tests' small sample behaviour. { A combined testing procedure 
which simultaneously checks the hypothesis of equal correlation matrices and covariance matrices is presented in \Cref{combined}.} The simulation results regarding type-I-error control and power are discussed in Section~\ref{Simulations}, while an illustrative data analysis of EEG-data is conducted in Section~\ref{Illustrative Data Analysis}. All proofs are deferred to a technical supplement, where also more simulations can be found.
\section{Statistical Model and Hypotheses} \label{mod}
To allow {for broad applicability}, we use a general semiparametric model, given by independent $d$-dimensional random vectors 
\[\vX_{ik}= \vmu_i + \vep_{ik},\]
where the index $i=1, \dots, a$ refers  to the treatment group and $k=1, \dots,n_i$ to the individual, on which $d$-variate observations are measured. 
As analyzing a scalar's correlation is useless,  we assume $d\geq 2$. {This model as well as all subsequent results can in principle be expanded for different dimensions of the groups as done in \cite{friedrich2017permuting}. However, as the notation would be a bit cumbersome, we did not follow this approach to increase the readability.}
The residuals $\vep_{i1},\dots,\vep_{in_i}$ are assumed to be centred $\E(\vep_{i1}) = \vnull_d$ and i.i.d. within each group, 
with finite fourth moment $\E(||\vep_{i1}||^4) < \infty$, {while  across groups they are only independent}. Thus, different distributions per group are possible. For our asymptotic considerations, we additionally assume
\setitemize{leftmargin=0.9cm}
\begin{itemize}
\vspace*{0.2cm}

  \item[(A1)] $\frac{n_i}{N}\to \kappa_i\in (0,1],~i= 1,...,a$ for $\min(n_1,\dots,n_a)\to\infty$ \vspace*{0.2cm}
\end{itemize}
with $N = \sum_{i=1}^a n_i$, to preclude that a single group dominates the setting. {In the following we use $\stackrel{\mathcal{P}}{\To}$ to  denote convergence in probability and $\stackrel{\mathcal{D}}{\To}$ for convergence in distribution as sample sizes increase. It follows from the context whether this means $N\to \infty$ or $n_i\to \infty$, respectively.}\\
{Moreover, to define correlation matrices we additionaly assume that the covariance matrices $\Cov(\vep_{i1})=\vV_i=(V_{ijk})_{j,k\leq d}$ have positive diagonals $V_{i11},...,V_{idd} > 0$. 
}
{Throughout}, the so-called half-vectorization operation $\vech$ is used for the covariance {matrices}, which {puts} the upper triangular elements of a $d\times d$ matrix into a $p=d(d+1)/2 $ dimensional vector. But, for a correlation matrix, the diagonal elements are always one and therefore contain no information, so this is not the best choice here.
Hence, a new vectorization operation $\vech^-$ is defined, which we will call the upper-half-vectorization. With $\vR_i$ as the correlation matrix for the i-th group,  this vectorization operation allows us to define \[\vr_i=\vech^-(\vR_i)= (r_{i12},\dots,r_{i1d},r_{i23},\dots,r_{i2d},\dots,r_{i(d-1)d})^\top, \  i=1,\dots,a,\] containing just the upper triangular entries of  $\vR_i$ which are not on the diagonal. The resulting vector has dimension $p_u=d(d-1)/2$, which is substantially smaller than $p$.
We now formulate hypotheses in terms of the pooled correlation vector $\vr= (\vr_1^\top,\dots,\vr_a^\top)^\top$ as 
\bqan\label{eq:hypo corr}
 \mathcal  H_0^{\vr}: \vC \vr = \vzeta,
\eqan
with a proper hypothesis matrix $\vC\in\mathbb{R}^{m\times a p_{u}}$ and a vector $\vzeta\in \mathbb{R}^{m}$. Hereby, we allow $m$ to be much smaller than $ap_u$, which is often useful (e.g. for computational reasons) for hypotheses matrices without full rank. {As explained in \cite{sattler2022} for the case of covariance matrices, $\vC$ does not have to be a projection matrix as $\vzeta$ is allowed to be different from the zero vector, see, for example, hypothesis (c) below}.\\

Hypotheses which are part of the setting \eqref{eq:hypo corr} are, among others:\\

(a)  {\bf Testing Homogeneity of correlation matrices:}
 $$
\mathcal H_0^{\vr}: \vR_1 = \dots = \vR_a, \text{ resp. } {\mathcal H_0^{\vr}:(\vP_a\otimes \vI_{p_u}) \vr=\vnull_{ap_u}},
 $$
{with $\vP_a:=\veins_a \veins_a^\top-\vI_a/a$ and $\otimes$ denoting the Kronecker product. This hypothesis  was, for example, investigated in \cite{jennrich1970}, while }for $d=2$, this includes the problem of testing the null hypothesis 
  $$
 \mathcal  H_0^{\vr}: \rho_{1} = \rho_2 = \dots = \rho_a
 $$
 of equal correlations $\rho_i = \Corr(X_{i11},X_{i12}), i=1,\dots,a$ within \eqref{eq:hypo corr}. This again contains testing equality of correlations between two groups, see, e.g. \cite{sakaori2002}, \cite{gupta2006}  or \cite{omPa:2012}.\\\\
(b){ \bf Testing a diagonal correlation matrix:}
$$ \mathcal  H_0^{\vr}: \vR_1 = \vI_d, \text{ resp. } \mathcal H_0^{\vr}:{ \vI_{p_u}\vr_1=\vnull_{p_u}}.$$
A test procedure for zero correlations was introduced by \cite{bartlett1951} for the one-group setting. More hypotheses on the structure of the covariance matrix can be found, for example, in  \cite{joereskog}, \cite{steiger1980} and \cite{wu2018}. \\
 
 (c)  {\bf Testing for a given correlation:} Let $\vR$ be a given correlation matrix, like, e.g., an autoregressive or compound symmetry matrix. For $a=1$, we then also cover testing the null  hypothesis 
 $$
 \mathcal  H_0^{\vr}: \vR_1 = \vR\text{ resp. }{\mathcal  H_0^{\vr}: \vI_{p_u}\vr_1=\vech^-(\vR) 
 }.
 $$ For $d=2$, this  also contains the issue of testing the null  hypothesis 
  $$
 \mathcal  H_0^{\vr}: \rho_{1} = 0
 $$
 of uncorrelated random variables with  $\rho_1 = \Corr(X_{111},X_{112})$, see  e.g. \cite{aitkin1968}. \\\\
 (d)  {\bf Testing for equal correlations:} For $a=1$, we are interested whether the correlation between all components is the same, i.e. 
 $$
\mathcal H_0^{\vr}: \vr_{11}=\vr_{12}=...=\vr_{1p_u}{\text{ resp. } \mathcal H_0^{\vr}: \vP_{p_u}\vr_1=\vnull_{p_u}.}$$
This kind of hypothesis is connected with {a} compound symmetry matrix and was, e.g. investigated in \cite{wilks1946} and \cite{box1950} for special settings.

\section{Asymptotics regarding the vectorized correlation}\label{Asymptotics regarding the vectorized correlation}
To infer null hypotheses of the kind $ \mathcal H_0^{\vr}: \vC \vr = \vzeta,$ it is necessary first to investigate the asymptotic distribution of $\vC \widehat \vr$, while $\widehat \vr$ is the pooled vector of upper-half-vectorized empirical correlation matrices. { Thereto Theorem 3.1 from \cite{sattler2022} is shortly repeated first.
Therein, for $\vv=(\vv_1^\top,...,\vv_a^\top)^\top=(\vech(\vV_1)^\top,...,\vech(\vV_a)^\top)^\top$ and  with empirical covariance matrices $\widehat \vV_i$ and $\widehat{\vv}=(\widehat\vv_1^\top,...,\widehat\vv_a^\top)^\top=(\vech(\widehat\vV_1)^\top,...,\vech(\widehat\vV_a)^\top)^\top$ it holds }
\begin{equation}\sqrt{N} \vC(\widehat \vv -\vv)\stackrel{\mathcal D}{\longrightarrow} \mathcal{N}_{ap}\left(\vnull_{ap},\vC\vSigma \vC^\top\right).\label{Konvergenz}\end{equation}
{Here, the covariance matrix is defined as $\vSigma=\bigoplus_{i=1}^a \frac{1}{k_i} \vSigma_i$,
where $\bigoplus$ denotes the direct sum and   $\vSigma_i=\Cov(\vech(\vepsilon_{i1}\vepsilon_{i1}^\top)^\top) $ for $i=1,\dots, a$.} First, some additional matrices have to be defined to use this result for correlation matrices. Let $\ve_{k,p}=(\delta_{k \ell})_{\ell=1}^p$ define the $p$-dimensional vector, which contains a one in the k-th component and zeros elsewhere. 
 
 Moreover, we need a $d$-dimensional auxiliary  vector $\va=(a_1,...,a_d)$, given through $a_k=1+\sum_{j=1}^{k-1} (d+1-j)$, $k=1,...,d$. It contains the position of components in the half-vectorized matrix, which are the diagonal elements of the original matrix.
 In accordance with this, we define the $p_u$-dimensional vector $\vb$, which contains the numbers from one to $p$ in ascending order without the elements from $\va$. This vector $\vb$   contains the position of components in the half-vectorized matrix, which are {non-diagonal} elements.\\
 With these vectors we are able to define a  $d\times d$ matrix $\vH=\boldsymbol{1}_{d}\va$ and the vectors $\vh_1=\vech^-(\vH)$ and $\vh_2=\vech^-(\vH^\top)$. Finally, we can define the matrices
\[
\vM_1=\sum_{\ell=1}^{p_u} \ve_{\ell,p_u}(\ve_{{\vh_1}_{\ell},p}+\ve_{{\vh_2}_\ell,p})^\top\quad \text{ and } \quad \vL_p^u=\sum_{\ell=1}^{p_u} \ve_{\ell,p_u}\ \ve_{{\vb}_{\ell},p}^\top.\]
This allows us to formulate a connection between the $\vech$ operator and the $\vech^-$ operator, since the matrix $\vL_p^u$ fulfils $\vL_p^u \vech(\vA)=\vech^-(\vA)$ for each arbitrary matrix $\vA\in \mathbb{R}^{d\times d}$. This matrix is comparable to the elimination matrix from \cite{magnus1980} and adapted to this special kind of half-vectorization.\\
With all these matrices, a connection can be found between $\sqrt{n_i}(\widehat \vv_i-\vv_i)$ and $\sqrt{n_i}(\widehat \vr_i-\vr_i)$, which 
allows {getting} the requested result by applying \eqref{Konvergenz}. The approach to connect vectorized correlation and vectorized covariance is based on \cite{browne} and \cite{nel1985}, and adapted to the current more general setting.
\begin{theorem}\label{CorTheorem1}
With the previously defined matrices $\vL_{p}^u$, $\vM_1$ and 
\[\vM(\vv_i,\vr_i):=\left[\vL_p^u-\frac 1 2\diag(\vr_i)\vM_1   \right] \diag(\vech((v_{i11},...,v_{i dd})^\top (v_{i11},...,v_{i dd})))^{-\frac 1 2},\]
it holds 
\[\sqrt{n_i}(\widehat \vr_i-\vr_i)=\vM(\vv_i,\vr_i)\sqrt{n_i} (\widehat \vv_i-\vv_i)+\lan_P(1).\] 
Thus
\[\sqrt{n_i}(\widehat \vr_i-\vr_i)\stackrel{\mathcal D}{\To}\mathcal{N}_{p_u}\large(\vnull_{p_u},\underbrace{\vM(\vv_i,\vr_i)\vSigma_i\vM(\vv_i,\vr_i)^\top}_{=:\vUpsilon_i}\large)\]
and { if Assumption (A1) is fulfilled also}
\[\sqrt{N}(\widehat \vr-\vr)\stackrel{\mathcal D}{\To}\mathcal{N}_{a p_u}\left(\vnull_{a p_u},\bigoplus\nolimits_{i=1}^a {\kappa_i}^{-1}\vUpsilon_i\right)=\mathcal{N}_{a p_u}(\vnull_{ap_u},\vUpsilon). \]
\end{theorem}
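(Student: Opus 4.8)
\emph{The plan} is to recognise $\vM(\vv_i,\vr_i)$ as the Jacobian matrix of the deterministic map that turns a covariance matrix into its correlation matrix, and then to combine a first-order Taylor expansion of that map with the central limit theorem \eqref{Konvergenz} for the vectorised empirical covariances; this is the route of \cite{browne} and \cite{nel1985}, the only new point being to carry it out in the present group structure and for the $\vech^-$-vectorisation. Concretely, on the open set $\mathcal U=\{\vech(\vA):\vA\in\mathbb R^{d\times d}\text{ symmetric with }A_{11},\dots,A_{dd}>0\}$ I would define $g\colon\mathcal U\to\mathbb R^{p_u}$ by letting the $\ell$-th coordinate of $g(\vech(\vA))$ be $A_{jk}(A_{jj}A_{kk})^{-1/2}$, where $(j,k)$ with $j<k$ is the index pair that position $\ell$ of $\vech^-$ refers to. By construction $g$ builds the correlation matrix of $\vA$ and then applies $\vech^-$, so $g(\vv_i)=\vr_i$; and since $\widehat\vR_i$ is obtained from $\widehat\vV_i$ by the same formula, $g(\widehat\vv_i)=\widehat\vr_i$ on the event $\{\widehat V_{i11},\dots,\widehat V_{idd}>0\}$, whose probability tends to one because $V_{ijj}>0$ for every $j$ and $\widehat\vv_i\stackrel{\mathcal P}{\To}\vv_i$ (a weak law of large numbers, or a special case of \eqref{Konvergenz}).

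Next I would compute the derivative of $g$ and match it to $\vM(\vv_i,\vr_i)$. On $\mathcal U$ the map $g$ is a rational function of the entries of $\vA$ with non-vanishing denominators, hence continuously differentiable, and elementary differentiation gives, for the pair $(j,k)$ attached to coordinate $\ell$,
\[
\frac{\partial g_\ell}{\partial A_{jk}}=\frac{1}{\sqrt{A_{jj}A_{kk}}},\qquad
\frac{\partial g_\ell}{\partial A_{jj}}=-\frac12\,\frac{A_{jk}(A_{jj}A_{kk})^{-1/2}}{A_{jj}},\qquad
\frac{\partial g_\ell}{\partial A_{kk}}=-\frac12\,\frac{A_{jk}(A_{jj}A_{kk})^{-1/2}}{A_{kk}},
\]
all other partials being zero. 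It then remains to verify that arranging these numbers, evaluated at $\vA=\vV_i$, into a $p_u\times p$ matrix reproduces $\vM(\vv_i,\vr_i)$: reading the claimed formula from the right, the diagonal matrix $\diag(\vech((v_{i11},\dots,v_{idd})^\top(v_{i11},\dots,v_{idd})))^{-1/2}$ supplies the weight $(v_{ijj}v_{ikk})^{-1/2}$ at the off-diagonal position $v_{ijk}$ and the weights $v_{ijj}^{-1},v_{ikk}^{-1}$ at the diagonal positions $a_j,a_k$; the factor $\vL_p^u$ extracts, in row $\ell$, exactly the off-diagonal coordinate $v_{ijk}$ (giving the first partial); and $-\tfrac12\diag(\vr_i)\vM_1$ puts the factor $-\tfrac12 r_{ijk}$ precisely at positions $a_j$ and $a_k$, because $\vH=\veins_d\va$ forces $\vh_{1\ell}=a_k$ and $\vh_{2\ell}=a_j$ (giving the other two partials). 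I expect this index bookkeeping — checking that the combinatorics encoded in $\va$, $\vb$, $\vh_1$, $\vh_2$ line up with the three partial derivatives above — to be the one genuinely fiddly part of the proof; everything else is routine.

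With the identification in hand, I would apply \eqref{Konvergenz} with a single group and $\vC=\vI_p$ to get $\sqrt{n_i}(\widehat\vv_i-\vv_i)\stackrel{\mathcal D}{\To}\mathcal N_p(\vnull_p,\vSigma_i)$, hence $\sqrt{n_i}(\widehat\vv_i-\vv_i)=\Lan_P(1)$ and $\widehat\vv_i\stackrel{\mathcal P}{\To}\vv_i$. Since $g$ is differentiable at $\vv_i$, a first-order Taylor expansion of $g$ at $\vv_i$ together with $\widehat\vv_i\stackrel{\mathcal P}{\To}\vv_i$ yields $\widehat\vr_i-\vr_i=g(\widehat\vv_i)-g(\vv_i)=\vM(\vv_i,\vr_i)(\widehat\vv_i-\vv_i)+R_i$ with $\|R_i\|=\lan_P(\|\widehat\vv_i-\vv_i\|)=\lan_P(n_i^{-1/2})$; multiplying by $\sqrt{n_i}$ gives $\sqrt{n_i}(\widehat\vr_i-\vr_i)=\vM(\vv_i,\vr_i)\sqrt{n_i}(\widehat\vv_i-\vv_i)+\lan_P(1)$, and Slutsky's lemma turns the covariance CLT into $\sqrt{n_i}(\widehat\vr_i-\vr_i)\stackrel{\mathcal D}{\To}\mathcal N_{p_u}(\vnull_{p_u},\vM(\vv_i,\vr_i)\vSigma_i\vM(\vv_i,\vr_i)^\top)=\mathcal N_{p_u}(\vnull_{p_u},\vUpsilon_i)$.

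For the pooled statement I would stack the $a$ per-group linearisations; under (A1) the factors $\sqrt{N/n_i}\to\kappa_i^{-1/2}$ are bounded, so $\sqrt{N}(\widehat\vr-\vr)=\bigl(\bigoplus_{i=1}^a\vM(\vv_i,\vr_i)\bigr)\sqrt{N}(\widehat\vv-\vv)+\lan_P(1)$, and \eqref{Konvergenz} with $\vC=\bigoplus_{i=1}^a\vM(\vv_i,\vr_i)$ gives a centred Gaussian limit with covariance $\bigl(\bigoplus_i\vM(\vv_i,\vr_i)\bigr)\bigl(\bigoplus_i\kappa_i^{-1}\vSigma_i\bigr)\bigl(\bigoplus_i\vM(\vv_i,\vr_i)\bigr)^\top=\bigoplus_{i=1}^a\kappa_i^{-1}\vUpsilon_i=\vUpsilon$, the independence across groups being already encoded in the block-diagonal form of $\vSigma$.
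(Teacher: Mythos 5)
Your proposal is correct, and it takes the route the paper explicitly declines: the multivariate delta method applied to the coordinatewise map $g_\ell(\vech(\vA))=A_{jk}(A_{jj}A_{kk})^{-1/2}$, rather than the Browne--Nel stepwise expansion of $\widehat\vV_{i,0}^{-1/2}\widehat\vV_i\widehat\vV_{i,0}^{-1/2}$ that the authors actually carry out. The paper instead Taylor-expands the diagonal matrix $(\vI_p+n_i^{-1/2}\vU_{i,0})^{-1/2}$ around $\vI_p$, multiplies out the resulting matrix products, controls the remainders via Slutsky, and only then vectorizes, using the auxiliary matrices $\vM_2,\vM_3,\vM_4,\vM_5$ to convert identities like $\vech(\vU_{i,0}\vR_i)=\diag(\vech(\vR_i))\vM_2\vech(\vU_{i,0})$ into the final form $[\vL_p^u-\tfrac12\diag(\vr_i)\vM_1]$; your Jacobian computation arrives at the same matrix by direct differentiation and index bookkeeping, which you verify correctly (the weights $(v_{ijj}v_{ikk})^{-1/2}$ at position $b_\ell$ and $-\tfrac12 r_{ijk}v_{ijj}^{-1}$, $-\tfrac12 r_{ijk}v_{ikk}^{-1}$ at positions $a_j$, $a_k$ match $\partial g_\ell/\partial A_{jk}$, $\partial g_\ell/\partial A_{jj}$, $\partial g_\ell/\partial A_{kk}$). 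The authors themselves remark at the end of their proof that the delta method would give the same result; they prefer the stepwise expansion because its second-order terms are exactly what is harvested later, in Lemma A.2 and Section 5, to build the Taylor-based Monte-Carlo correction $f_{\widehat\vv_i,\widehat\vR_i}$. So your argument buys brevity and an appeal to standard theory (van der Vaart's delta method), while the paper's buys the explicit higher-order remainder needed downstream; as a proof of \Cref{CorTheorem1} alone, yours is complete, including the correct handling of the event $\{\widehat V_{i11},\dots,\widehat V_{idd}>0\}$ and the pooled block-diagonal limit. One cosmetic quibble: $g$ is not a rational function of the entries of $\vA$ (it involves square roots), but it is smooth on the set where the diagonal entries are positive, which is all the delta method needs.
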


\begin{re}
{The asymptotic normality of $\sqrt{n_i}(\widehat \vr_i-\vr_i)$ was also shown for similar statistics in the past. But through the existing relation between  $\sqrt{n_i}(\widehat \vv_i-\vv_i)$ and $\sqrt{n_i}(\widehat \vr_i-\vr_i)$ it is, for example, possible to express $\vUpsilon$ as a function of $\vSigma$, which allows to construct an estimator $\widehat \vUpsilon$ using $\widehat \vSigma$. In Sections \ref{Taylor} and \ref{combined} we show that this relation provides further 
opportunities.}

\end{re}
To use this result, we have to estimate the matrices $\vUpsilon_1,...,\vUpsilon_a$, which is done with
\[\widehat \vUpsilon_i=\vM(\widehat\vv_i,\widehat\vr_i)\widehat \vSigma_i\vM(\widehat\vv_i,\widehat\vr_i)^\top,\]
and $\widehat \vUpsilon:=\bigoplus_{i=1}^a \frac{n_i}{N}\widehat \vUpsilon_i$. {To this end, $\widehat \vSigma_i$ has to be a consistent estimator for  $\vSigma_i$. This is fulfilled, e.g., for the estimator from \cite{sattler2022},  given through}
\[\frac{1}{n_i-1} \sum\limits_{k=1}^{n_i}\left[\vech\left(\widetilde \vX_{ik}\widetilde \vX_{ik}^\top- \sum\limits_{\ell=1}^{n_i}\frac{\widetilde \vX_{i\ell}\widetilde \vX_{i\ell}^\top}{n_i}\right)\right]\left[\vech\left(\widetilde \vX_{ik}\widetilde \vX_{ik}^\top- \sum\limits_{\ell=1}^{n_i}\frac{\widetilde \vX_{i\ell}\widetilde \vX_{i\ell}^\top}{n_i}\right)\right]^\top,\]
{with $\widetilde \vX_{ik}:=\vX_{ik}-\overline \vX_i$.
 As continuous functions of consistent estimators the estimators $\widehat \vUpsilon_i$ are consistent.}  With this asymptotic result, test statistics based on quadratic forms can be formulated  through: 
%

 \begin{theorem}\label{Verteilung}  
{Let $\vE_N \in \mathbb{R}^{m\times m}$ fulfilling $\vE_N \stackrel{\mathcal{P}}{\To} \vE$, where $\vE\in \mathbb{R}^{m\times m}$ is symmetric. 
Then, under the null  hypothesis $\mathcal{H}_0^r:\vC\vr=\vzeta$, the quadratic form $\widehat Q_{\vr} $ defined by
\[\widehat Q_{\vr}=N\left[ \vC\widehat \vr - \vzeta\right]^\top \vE_N\left[  \vC\widehat \vr - 
\vzeta\right]\]
 has asymptotically a ``weighted $\chi^2$-distribution'', i.e.  for $N\to \infty$ it holds that
\[\widehat Q_{\vr}\stackrel{\mathcal {D}}{\To} \sum\nolimits_{\ell=1}^{ap_u} \lambda_\ell B_\ell,
\]}
{where $\lambda_\ell, \ell =1,\dots, a p_u,$ are the eigenvalues of $\vUpsilon^{1/2}\vC^\top\vE\vC \vUpsilon^{1/2}$ and $B_\ell  \stackrel
{i.i.d.} { \sim} \chi_1^2$.}
\end{theorem}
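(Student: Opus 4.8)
The plan is to combine the asymptotic normality from \Cref{CorTheorem1} with Slutsky's theorem, the continuous mapping theorem, and a spectral decomposition of the limiting quadratic form; the structure mirrors the analogous statement for covariance matrices in \cite{sattler2022}.

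First I would rewrite the centred statistic under the null. Since $\mathcal H_0^r$ gives $\vC\vr=\vzeta$, we have $\vC\widehat\vr-\vzeta=\vC(\widehat\vr-\vr)$, and applying the continuous mapping theorem to the linear map $\vx\mapsto\vC\vx$ in \Cref{CorTheorem1} yields
\[
\sqrt N\,(\vC\widehat\vr-\vzeta)\stackrel{\mathcal D}{\To}\vZ:=\vC\vUpsilon^{1/2}\vY,\qquad\vY\sim\mathcal N_{ap_u}(\vnull_{ap_u},\vI_{ap_u}),
\]
so in particular $\vZ\sim\mathcal N_m(\vnull_m,\vC\vUpsilon\vC^\top)$. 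Because $\vE_N\stackrel{\mathcal P}{\To}\vE$ and $\vE$ is deterministic, the pair $\bigl(\sqrt N(\vC\widehat\vr-\vzeta),\vE_N\bigr)$ converges jointly in distribution to $(\vZ,\vE)$ by Slutsky's theorem. Next I would invoke the continuous mapping theorem once more, now for the (everywhere continuous) map $(\vx,\vM)\mapsto\vx^\top\vM\vx$ on $\mathbb R^m\times\mathbb R^{m\times m}$, to obtain
\[
\widehat Q_{\vr}=N\,(\vC\widehat\vr-\vzeta)^\top\vE_N(\vC\widehat\vr-\vzeta)\stackrel{\mathcal D}{\To}\vZ^\top\vE\vZ=\vY^\top\bigl(\vUpsilon^{1/2}\vC^\top\vE\vC\,\vUpsilon^{1/2}\bigr)\vY.
\]

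The last step is to diagonalize. The matrix $\vA:=\vUpsilon^{1/2}\vC^\top\vE\vC\,\vUpsilon^{1/2}$ is symmetric, since $\vE=\vE^\top$ and $\vUpsilon^{1/2}$ denotes the symmetric positive semidefinite square root of $\vUpsilon$; hence it admits a spectral decomposition $\vA=\vP^\top\diag(\lambda_1,\dots,\lambda_{ap_u})\vP$ with an orthogonal matrix $\vP$ and real eigenvalues $\lambda_1,\dots,\lambda_{ap_u}$ (precisely the $\lambda_\ell$ in the statement). Setting $\widetilde\vY:=\vP\vY$, orthogonal invariance of the standard Gaussian law gives $\widetilde\vY\sim\mathcal N_{ap_u}(\vnull_{ap_u},\vI_{ap_u})$, so that
\[
\vY^\top\vA\vY=\widetilde\vY^\top\diag(\lambda_1,\dots,\lambda_{ap_u})\,\widetilde\vY=\sum_{\ell=1}^{ap_u}\lambda_\ell\,\widetilde Y_\ell^{\,2}=\sum_{\ell=1}^{ap_u}\lambda_\ell B_\ell,
\]
with $B_\ell:=\widetilde Y_\ell^{\,2}\stackrel{i.i.d.}{\sim}\chi_1^2$. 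Chaining the two displays then proves the asserted weighted-$\chi^2$ limit.

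I do not anticipate a genuine obstacle; the only points needing a little care are that $\vUpsilon$ need not be invertible — so one works with the symmetric square root and simply allows some $\lambda_\ell$ to vanish — and the replacement of $\vE_N$ by $\vE$ inside the quadratic form, which is exactly what the joint Slutsky convergence and the continuous mapping theorem deliver. If one prefers to make that replacement fully explicit, one can bound $\bigl|N(\vC\widehat\vr-\vzeta)^\top(\vE_N-\vE)(\vC\widehat\vr-\vzeta)\bigr|\le\|\vE_N-\vE\|\,\bigl\|\sqrt N(\vC\widehat\vr-\vzeta)\bigr\|^2$, where the first factor is $o_P(1)$ by assumption and the second is $O_P(1)$ by \Cref{CorTheorem1}.
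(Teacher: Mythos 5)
Your proof is correct and follows essentially the same route as the paper: the paper's own proof is a one-line reduction to Theorem 2 of \cite{sattler2022}, whose argument is exactly the chain you spell out (null hypothesis recentering, asymptotic normality from \Cref{CorTheorem1}, Slutsky plus continuous mapping to replace $\vE_N$ by $\vE$, and spectral decomposition of the symmetric matrix $\vUpsilon^{1/2}\vC^\top\vE\vC\vUpsilon^{1/2}$ to obtain the weighted $\chi^2_1$ representation). Your closing remarks on the possible singularity of $\vUpsilon$ and the explicit $o_P(1)\cdot O_P(1)$ bound are sound and, if anything, make the argument more self-contained than the paper's.
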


Thus quadratic-form type test statistics can be formulated, similar to \cite{sattler2022}  also for the vectorized correlation matrix. 
{Here we often consider symmetric matrices $\vE_N$, which can be written as function of $\vC$ and $\widehat \vUpsilon$ fullfilling  $\vE_N(\vC,\widehat \vUpsilon)=\vE_N\stackrel{\mathcal{P}}{\To}\vE=\vE(\vC,\vUpsilon)$.}
Examples cover: \[\begin{array}{lrl}
&ATS_{\vr}\hspace*{-0.1cm}&={N}\left[\vC(\widehat\vr-\vr)\right]^\top \left[\vC(\widehat\vr-\vr)\right]/\tr(\vC \widehat{\vUpsilon}\vC^\top),\\
&WTS_{\vr}\hspace*{-0.1cm}&={N}\left[\vC(\widehat\vr-\vr)\right]^\top (\vC \widehat{\vUpsilon}\vC^\top)^+\left[\vC(\widehat\vr-\vr)\right] 
.\end{array}\]
 This, e.g., leads to an asymptotic valid test {$\varphi_{WTS_{\vr}}=\ind\{ {WTS_{\vr}} \notin (-\infty,\chi^2_{\rank(\vC);1-\alpha}]\}$}, in case of $\vUpsilon>0$.
 {This is, however, hard to verify due to its complex structure. 
 We therefore do not treat the WTS in the following.}

  {Conversely the ATS is no asymptotic pivot but 
the simulation results from \cite{sattler2022} suggest to use its  Monte-Carlo version.
Hereto, the according matrices from \Cref{Verteilung} were estimated and by this we also get the estimated eigenvalues. By generating $B_1,...B_{ap_u}$ the weighted sum can be calculated, and by repeating this frequently (e.g. W=10,000 times), the required $\alpha$ quantiles of the weighted $\chi_1^2$ distribution can be estimated,  denoted by $q_{\alpha}^{MC}$. For example, this gives us the test  $\varphi_{ATS_{\vr}}:=\ind\{ ATS_{\vr} \notin (-\infty,q_{1-\alpha}^{MC}]\}$, and a similar approach can be used for all quadratic forms fulfilling \Cref{Verteilung}.}\\
\section{Resampling Procedures}\label{Resampling Procedures}
A resampling procedure may be useful, on the one hand, for a better small sample approximation and, on the other hand, for quadratic forms with critical values that are difficult to calculate. Since the simulations from \cite{sattler2022} showed clear advantages of the parametric bootstrap, we focus on this approach but also present a wild bootstrap approach in the supplement.
For every group  $\vX_{i 1},..., \vX_{i n_i}$, $i=1,...,a$, we calculate the covariance matrix $\widehat \vUpsilon_i$. With this covariance matrix we generate i.i.d. random vectors $\vY_{i 1}^\dagger,...,\vY_{i n_i}^\dagger$ ${\sim} \mathcal N_{p_u}(\vnull_{p_u},\widehat\vUpsilon_i)$ which are independent of the realizations and calculate their sample covariances $\widehat\vUpsilon_i^\dagger$  respectively $\widehat\vUpsilon^\dagger:=\bigoplus_{i=1}^a\frac{N}{n_i} \widehat \vUpsilon_i^\dagger$. For these random vectors, we now consider the asymptotic distribution.

\begin{theorem}\label{PBTheorem1}If  Assumption (A1) is fulfilled, it holds:
Given the data, the conditional  distribution of
\begin{itemize}

\item[(a)]{ $\sqrt{N}\ \overline \vY_i^\dagger$, for $i=1,...,a$, converges weakly to $ \mathcal{N}_{p_u}\left(\vnull_{p_u},{\kappa_i}^{-1}\vUpsilon_i\right)$ in probability}.
\item[(b)]  { $\sqrt{N}\ \overline \vY^\dagger$ converges weakly to $ \mathcal{N}_{ap_u}\left(\vnull_{a p_u}, \vUpsilon\right)$ in probability.} 
\end{itemize}
{Moreover, we have $\widehat \vUpsilon_i^\dagger \stackrel{\mathcal{P}}{\to} \vUpsilon_i$ and $\widehat \vUpsilon^\dagger \stackrel{\mathcal{P}}{\to}  \vUpsilon$. Thus, the unknown covariance matrices can be estimated through these estimators.}
\end{theorem}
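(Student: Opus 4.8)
The plan is to exploit that, given the data, the bootstrap vectors are \emph{exactly} Gaussian, so no central limit theorem is needed and only the behaviour of the conditional covariance matrices has to be controlled. Fix a group $i$. Conditionally on $\vX_{11},\dots,\vX_{an_a}$, the vectors $\vY_{i1}^\dagger,\dots,\vY_{in_i}^\dagger$ are i.i.d.\ $\mathcal N_{p_u}(\vnull_{p_u},\widehat\vUpsilon_i)$, so
\[
\sqrt N\,\overline{\vY}_i^\dagger \;=\; \sqrt{\tfrac{N}{n_i}}\;\frac1{\sqrt{n_i}}\sum_{k=1}^{n_i}\vY_{ik}^\dagger
\]
is, conditionally, exactly $\mathcal N_{p_u}\!\bigl(\vnull_{p_u},\tfrac{N}{n_i}\widehat\vUpsilon_i\bigr)$, with conditional characteristic function $\boldsymbol{t}\mapsto\exp\!\bigl(-\tfrac12\boldsymbol{t}^\top\tfrac N{n_i}\widehat\vUpsilon_i\boldsymbol{t}\bigr)$. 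First I would note that, by Assumption~(A1), $N/n_i\to\kappa_i^{-1}$, and that $\widehat\vUpsilon_i\stackrel{\mathcal P}{\To}\vUpsilon_i$ was already recorded after \Cref{CorTheorem1}; hence $\tfrac N{n_i}\widehat\vUpsilon_i\stackrel{\mathcal P}{\To}\kappa_i^{-1}\vUpsilon_i$ and, for each fixed $\boldsymbol{t}$, the above conditional characteristic function converges in probability to $\exp\!\bigl(-\tfrac12\boldsymbol{t}^\top\kappa_i^{-1}\vUpsilon_i\boldsymbol{t}\bigr)$, the characteristic function of $\mathcal N_{p_u}(\vnull_{p_u},\kappa_i^{-1}\vUpsilon_i)$. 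To turn this into statement~(a) I would use the standard subsequence device: along an arbitrary subsequence extract a further one on which $\widehat\vUpsilon_i\to\vUpsilon_i$ almost surely, apply L\'evy's continuity theorem pathwise on a countable dense set of arguments, and conclude weak convergence of the conditional law in probability.

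For part~(b), I would use that the families $\{\vY_{ik}^\dagger\}_k$, $i=1,\dots,a$, are drawn independently, so that given the data the blocks of $\sqrt N\,\overline{\vY}^\dagger$ are independent and its conditional characteristic function factorizes as $\prod_{i=1}^a\exp\!\bigl(-\tfrac12\boldsymbol{t}_i^\top\tfrac N{n_i}\widehat\vUpsilon_i\boldsymbol{t}_i\bigr)$. This converges in probability to $\prod_{i=1}^a\exp\!\bigl(-\tfrac12\boldsymbol{t}_i^\top\kappa_i^{-1}\vUpsilon_i\boldsymbol{t}_i\bigr)=\exp\!\bigl(-\tfrac12\boldsymbol{t}^\top\vUpsilon\boldsymbol{t}\bigr)$ because $\vUpsilon=\bigoplus_{i=1}^a\kappa_i^{-1}\vUpsilon_i$, and the same subsequence argument gives weak convergence of the conditional law to $\mathcal N_{ap_u}(\vnull_{ap_u},\vUpsilon)$ in probability.

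For the final claims on the covariance estimators, I would argue conditionally again. Given the data, $\widehat\vUpsilon_i^\dagger$ is the sample covariance of $n_i$ i.i.d.\ $\mathcal N_{p_u}(\vnull_{p_u},\widehat\vUpsilon_i)$ draws, so its conditional mean equals $\widehat\vUpsilon_i$ (up to an $O(n_i^{-1})$ normalisation term) and, by the Gaussian/Wishart fourth-moment formula, each entry has conditional variance of order $n_i^{-1}$ times a polynomial in the entries of $\widehat\vUpsilon_i$. Since $\widehat\vUpsilon_i$ is bounded in probability, truncating on $\{\|\widehat\vUpsilon_i\|\le M\}$ and applying a conditional Chebyshev inequality gives $\widehat\vUpsilon_i^\dagger-\widehat\vUpsilon_i\stackrel{\mathcal P}{\To}\vnull$; combined with $\widehat\vUpsilon_i\stackrel{\mathcal P}{\To}\vUpsilon_i$ this yields $\widehat\vUpsilon_i^\dagger\stackrel{\mathcal P}{\To}\vUpsilon_i$, and then $\widehat\vUpsilon^\dagger=\bigoplus_{i=1}^a\tfrac N{n_i}\widehat\vUpsilon_i^\dagger\stackrel{\mathcal P}{\To}\bigoplus_{i=1}^a\kappa_i^{-1}\vUpsilon_i=\vUpsilon$ by (A1) and the continuous mapping theorem. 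I expect the main obstacle to be technical rather than conceptual: making the passage from in-probability, pointwise convergence of the conditional characteristic functions to weak convergence of the conditional laws in probability fully rigorous -- i.e.\ justifying the subsequence / pathwise continuity-theorem step -- together with the truncation needed in the last part to cope with the randomness of the target $\widehat\vUpsilon_i$ inside the conditional variance bound.
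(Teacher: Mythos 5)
Your proof is correct, but it takes a genuinely different route from the paper's on both halves of the statement. For the weak convergence, you exploit that, given the data, $\sqrt{N}\,\overline{\vY}_i^\dagger$ is \emph{exactly} $\mathcal N_{p_u}\bigl(\vnull_{p_u},\tfrac{N}{n_i}\widehat\vUpsilon_i\bigr)$, so the whole problem reduces to the continuity of the map from a covariance matrix to the corresponding Gaussian law together with $\tfrac{N}{n_i}\widehat\vUpsilon_i\stackrel{\mathcal P}{\To}\kappa_i^{-1}\vUpsilon_i$; the paper instead verifies the three conditions of a conditional multivariate Lindeberg--Feller theorem (zero conditional means, convergence of the summed conditional covariances, and a Lindeberg condition checked via Cauchy--Schwarz). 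Your observation makes the CLT machinery unnecessary and is arguably the more economical argument for a parametric Gaussian bootstrap — indeed, the final subsequence/characteristic-function device you worry about can be bypassed entirely by the continuous mapping theorem applied to $\vA\mapsto\mathcal N(\vnull,\vA)$ in any metric metrizing weak convergence. The paper's Lindeberg--Feller route is more generic (it would survive replacing the Gaussian resampling distribution by something else) but buys nothing here. For the consistency of $\widehat\vUpsilon_i^\dagger$, you argue directly via a conditional Chebyshev bound with truncation on $\{\|\widehat\vUpsilon_i\|\le M\}$, whereas the paper reduces to the covariance-level bootstrap of \cite{sattler2022} through the distributional identity $\vY_{ik}^\dagger\stackrel{\mathcal D}{=}\vM(\widehat\vv_i,\widehat\vr_i)\vZ_{ik}^\dagger$ with $\vZ_{ik}^\dagger\sim\mathcal N_p(\vnull_p,\widehat\vSigma_i)$ and then invokes the known consistency of the $\vZ$-sample covariance plus Slutsky; your version is self-contained, the paper's is shorter by leaning on the earlier work. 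Both parts of your argument use only inputs the paper has already established (consistency of $\widehat\vUpsilon_i$ after \Cref{CorTheorem1}, Assumption (A1), and independence across groups), so there is no gap.
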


In consequence of \Cref{PBTheorem1}, it is reasonable to calculate the bootstrap version of the previous quadratic forms, 
\[\begin{array}{cl} Q_{\vr}^\dagger &=N[ \vC\overline \vY^\dagger  ]^\top {\vE}[ \vC\ \overline \vY^\dagger  ]\quad \text{ resp. }\quad Q_{\vr}^\dagger =N[ \vC\overline \vY^\dagger  ]^\top \vE_N(\vC,\widehat \vUpsilon^\dagger)[ \vC\ \overline \vY^\dagger]. \\
\end{array}\]
{The first can be used if the limiting matrix $\vE=\vE(\vC,\vUpsilon)$ of $\vE_N(\vC,\widehat \vUpsilon)$ is known. In the second version $\vE=\vE(\vC,\vUpsilon)$ is estimated via 
$\vE_N=\vE_N(\vC,\widehat \vUpsilon^\dagger)$ which is reasonable if from $\vA_n\stackrel{\mathcal{P}}{\to}\vA$, for random matrices $\vA_n$ and $\vA$
it follows $\vE_N(\vC,\vA_n) \stackrel{\mathcal{P}}{\to}  \vE(\vC,\vA)$ (see e.g. \cite{vanderVaart1996}). This is fulfilled, if the function 
is continuous in its second component which, e.g., holds for the trace operator used in the ATS and which we assume in what follows.}
{The bootstrap versions always approximate the null distribution of $\widehat Q_{\vr}$, as established below.}
\begin{Ko}\label{KorParametric}
For each parameter vector $\vr\in \mathbb{R}^{a p_u}$ and $\vr_0\in \mathbb{R}^{a p_u}$ with $\vC\vr_0=\vzeta$,  under Assumption (A1) we have
\[\begin{array}{l}
\sup\limits_{x\in\mathbb{R}}\big\lvert P_{\vr}(Q_{\vr}^\dagger\leq x\lvert \vX)-P_{\vr_0}(\widehat Q_{\vr}\leq x)\big\lvert \stackrel{\mathcal P}{\To}0,\\[1.5ex]
\end{array}\] where $P_{\vr}$ denotes the (un)conditional distribution of the test statistic when $\vr$ is the true underlying vector.
 \end{Ko}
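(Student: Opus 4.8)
The plan is to show that, conditionally on the data, the bootstrap statistic $Q_{\vr}^\dagger$ has the same weighted-$\chi^2$ limiting law as the one obtained for $\widehat Q_{\vr}$ under the null hypothesis in \Cref{Verteilung}, and then to upgrade both distributional convergences to uniform convergence of the associated distribution functions by a P\'olya-type argument. A first observation is that $Q_{\vr}^\dagger$ is built entirely from the centred bootstrap variables $\vY_{ik}^\dagger$, so no restriction on the true vector $\vr$ is needed for its analysis --- this is precisely why the statement holds for \emph{every} $\vr\in\mathbb{R}^{ap_u}$. By \Cref{PBTheorem1}(b), $\sqrt N\,\overline\vY^\dagger$ converges weakly to $\mathcal N_{ap_u}(\vnull_{ap_u},\vUpsilon)$ in probability, and hence so does $\sqrt N\,\vC\overline\vY^\dagger$ to $\mathcal N_{m}(\vnull_{m},\vC\vUpsilon\vC^\top)$.

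For the first version of $Q_{\vr}^\dagger$ (with the fixed limit matrix $\vE$), applying the continuous mapping theorem to the quadratic form $\vx\mapsto\vx^\top\vE\vx$ gives, conditionally on the data and in probability, $Q_{\vr}^\dagger\stackrel{\mathcal D}{\To}\vZ^\top\vE\vZ$ with $\vZ\sim\mathcal N_{m}(\vnull_{m},\vC\vUpsilon\vC^\top)$. Writing $\vZ\stackrel{d}{=}\vC\vUpsilon^{1/2}\boldsymbol{W}$ with $\boldsymbol{W}\sim\mathcal N_{ap_u}(\vnull_{ap_u},\vI_{ap_u})$ and diagonalising the symmetric matrix $\vUpsilon^{1/2}\vC^\top\vE\vC\vUpsilon^{1/2}$, one obtains $\vZ^\top\vE\vZ\stackrel{d}{=}\sum_{\ell=1}^{ap_u}\lambda_\ell B_\ell$ with $B_\ell\stackrel{i.i.d.}{\sim}\chi_1^2$ and $\lambda_\ell$ the eigenvalues of $\vUpsilon^{1/2}\vC^\top\vE\vC\vUpsilon^{1/2}$ --- exactly the limit appearing in \Cref{Verteilung}. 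For the second version (with $\vE_N(\vC,\widehat\vUpsilon^\dagger)$) I would additionally use $\widehat\vUpsilon^\dagger\stackrel{\mathcal P}{\To}\vUpsilon$ from \Cref{PBTheorem1} together with the assumed continuity of $\vA\mapsto\vE_N(\vC,\vA)$ in its second argument, so that $\vE_N(\vC,\widehat\vUpsilon^\dagger)\stackrel{\mathcal P}{\To}\vE$, and then run a conditional Slutsky argument: since $(\vx,\vA)\mapsto\vx^\top\vA\vx$ is jointly continuous, the pair $\big(\sqrt N\,\vC\overline\vY^\dagger,\ \vE_N(\vC,\widehat\vUpsilon^\dagger)\big)$ converges, conditionally on the data and in probability, to $(\vZ,\vE)$, which yields the same limit.

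It remains to turn these two convergences into the claimed uniform bound. The limiting law $G(x):=P\big(\sum_{\ell=1}^{ap_u}\lambda_\ell B_\ell\le x\big)$ has a continuous distribution function, since a non-trivial linear combination of independent $\chi_1^2$ variables is absolutely continuous (the degenerate case $\lambda_1=\dots=\lambda_{ap_u}=0$ is immediate). On the one hand, under $P_{\vr_0}$ we have $\widehat Q_{\vr}\stackrel{\mathcal D}{\To}G$ by \Cref{Verteilung}, so P\'olya's theorem gives $\sup_{x\in\mathbb R}\big|P_{\vr_0}(\widehat Q_{\vr}\le x)-G(x)\big|\to 0$. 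On the other hand, writing $H_N(x):=P_{\vr}(Q_{\vr}^\dagger\le x\mid\vX)$ for the random bootstrap distribution function, the previous step gives $H_N(x)\stackrel{\mathcal P}{\To}G(x)$ for each fixed $x$; since the $H_N$ are non-decreasing and $G$ is continuous and non-decreasing with limits $0$ and $1$, pointwise convergence in probability forces $\sup_{x\in\mathbb R}|H_N(x)-G(x)|\stackrel{\mathcal P}{\To}0$. This is the in-probability version of the classical fact that monotone functions converging to a continuous limit do so uniformly; it is proved by cutting $\mathbb R$ at finitely many points where $G$ attains the levels $j/K$, bounding the oscillation of both $G$ and $H_N$ between consecutive cut points by monotonicity of $H_N$ and continuity of $G$, and using that a maximum over finitely many points converges in probability. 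A triangle inequality then gives $\sup_{x\in\mathbb R}\big|P_{\vr}(Q_{\vr}^\dagger\le x\mid\vX)-P_{\vr_0}(\widehat Q_{\vr}\le x)\big|\stackrel{\mathcal P}{\To}0$.

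The step I expect to be the main obstacle is the conditional one: pushing conditional-in-probability weak convergence of $\sqrt N\,\vC\overline\vY^\dagger$ through a data-dependent quadratic form (the second version of $Q_{\vr}^\dagger$) and carrying out the conditional P\'olya upgrade rigorously. Both are by now standard in the resampling literature --- one typically invokes a lemma asserting that random distribution functions converging pointwise in probability to a continuous deterministic limit converge uniformly in probability (cf.\ the arguments used in \cite{sattler2022} and the tools in \cite{vanderVaart1996}) --- but they are the places where genuine care is needed, whereas the algebraic identification of the limit via \Cref{Verteilung} is routine.
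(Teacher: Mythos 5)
Your argument is correct and is exactly the route the paper intends: the corollary is left without an explicit proof in the appendix because it follows from \Cref{PBTheorem1} and \Cref{Verteilung} precisely as you describe — identify the conditional weak limit of $Q_{\vr}^\dagger$ with the weighted $\chi^2$ limit of $\widehat Q_{\vr}$ under $\vC\vr_0=\vzeta$ (using the assumed continuity of $\vE_N(\vC,\cdot)$ and $\widehat\vUpsilon^\dagger\stackrel{\mathcal P}{\To}\vUpsilon$ for the data-dependent version), then apply a P\'olya-type uniformity argument to the continuous limiting distribution function and conclude by the triangle inequality. Your additional remarks — that the bootstrap side requires no restriction on the true $\vr$, and that the nondegenerate weighted $\chi^2$ law has a continuous distribution function — are exactly the points that make the statement hold for arbitrary $\vr$.
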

{This motivates the definition of bootstrap tests  as $\varphi_{Q_{\vr}}^\dagger:=\ind\{Q_{\vr}^\dagger \notin (-\infty,c_{Q_{\vr}^\dagger,1-\alpha}]\}$, where
$c_{Q_{\vr}^\dagger, 1-\alpha}$ denotes the conditional quantile of $Q_{\vr}^\dagger$.}
{The above results ensure that these tests are of asymptotic level $\alpha$, and we will use it 
for the ATS.}

Fisher z-transformed vectors are often used in the analysis of correlation matrices instead of the original vectorized correlation matrices. Although the root of this approach is the distribution of the Fisher z-transformed correlation in the case of normally distributed observations, it is also used for tests without this distributional restriction, see, e.g., \cite{steiger1980}.  
In principle, we could also consider our tests together with the transformed vector. This approach assumes that all components of $\vzeta$ differ from one, which is always possible to ensure. We can define tests based on the transformation for each of our quadratic forms, including the tests based on bootstrap or Monte-Carlo simulations. 
Our simulations showed that the tests based on this transformation have more liberal behaviour than the original one, which was also mentioned in \cite{omPa:2012} for the case of bivariate permutation tests. Since all of our test statistics were already a bit liberal, it is not useful to consider these versions further. However, some details on this are given in the supplement. Instead, we propose an additional Taylor approximation of higher order as described below.
\section{Higher order of Taylor approximation}\label{Taylor}
The main result of \Cref{Asymptotics regarding the vectorized correlation} is the connection between the vectorized empirical covariance matrix and the vectorized empirical correlation matrix given through \[\sqrt{n_i}(\widehat \vr_i-\vr_i)=\vM(\vv_i,\vr_i)\sqrt{n_i} (\widehat \vv_i-\vv_i)+\lan_P(1).\] Since this is based on a Taylor approximation, it leads to the question of whether a higher order of approximation could be useful here. In this way we get \begin{equation}\sqrt{n_i}(\widehat \vr_i-\vr_i)=\vM(\widehat\vv_i,\widehat\vr_i) \vY_i+ f_{\widehat\vv_i,\widehat \vR_i}(\vY_i)/\sqrt{n_i}+\lan_P(\sqrt{{n_i^{-1}}}) \end{equation}\label{TayEq}
with 
$\vY_i\sim\mathcal{N}_{p}\left(\vnull_{p},\vSigma_i\right)$ and a function $f_{\widehat\vv_i,\widehat \vR_i}:\mathbb{R}^p\to \mathbb{R}^{p_u}$. This equation's derivation, together with the complex concrete form of the function $f_{\widehat\vv_i,\widehat \vR_i}$, can be found in the supplementary material. Although the part  $f_{\widehat\vv_i,\widehat \vR_i}(\vY_i)/\sqrt{n_i}$ is asymptotically negligible, for small sample sizes, it can affect the performance of the corresponding test.
For this purpose, we propose an additional Monte-Carlo based approach with this  Taylor approximation. Thereto for each group we generate $\vY_i\sim\mathcal{N}_{p}(\vnull_{p},\widehat \vSigma_i)$ and
transform it to $\vY_i^{Tay}:=\vM(\widehat\vv_i,\widehat\vr_i) \vY_i+ f_{\widehat\vv_i,\widehat \vR_i}(\vY_i)/\sqrt{n_i}$. This leads to the pooled vector $\vY^{Tay}=(n_1^{-1/2}{\vY_1^{Tay}}^\top,....,n_a^{-1/2}{\vY_a^{Tay}}^\top)^\top$. {So, we get a version of this quadratic form by calculating}
\[{Q_{\vr}^{Tay} =N[ \vC\vY^{Tay}  ]^\top \vE_N(\vC,\widehat \vUpsilon)[ \vC \vY^{Tay}]}.\] 
If we repeat this frequently enough, an asymptotic correct level $\alpha$ test can be constructed by comparing $\widehat Q_{\vr}$ with the empirical $1-\alpha$ of $Q_{\vr}^{Tay}$. Since there it is only necessary to calculate $\widehat \vUpsilon$ one time, this approach is clearly less time-consuming than the corresponding bootstrap approaches. \\\\
It would also be possible to use a Taylor approximation of higher order. Since the transformation gets more complex and our simulations suggest that this only affects very small sample sizes, we renounce this here.

\section{Combined tests for covariance and correlations}\label{combined}

The equality of two covariance matrices and the equality of two correlation matrices are interesting null hypotheses with a strong connection between them. 
As suggested by the Associate Editor, we here exemplify how the methodology extends to simultaneously comparing covariances and correlations, by using the results from \cite{sattler2022}. Hereto we use a so-called multiple contrast test (see, e.g. \cite{konietschke2013} for more details) and define
\[\vT=\begin{pmatrix}
T_1\\
\vdots\\
T_p
\end{pmatrix}= \sqrt{N}\left(\begin{pmatrix}
\widehat V_{1 11}\\
\widehat V_{1 22}\\
\vdots\\
\widehat V_{1 dd}\\
\widehat \vr_1
\end{pmatrix}- \begin{pmatrix}
\widehat V_{2 11}\\
\widehat V_{2 22}\\
\vdots\\
\widehat V_{2 dd}\\
\widehat \vr_2
\end{pmatrix}\right).\]
This statistic fulfills  $\vT\stackrel{\mathcal{D}}{\to}\mathcal{N}_p(\vnull_p,\vGamma)$ under the null hypothesis $\mathcal{H}_0:\vV_1=\vV_2$. The concrete form of the covariance matrix $\vGamma$ and the derivation can be found in the appendix.
In case of $\Gamma_{11},...,\Gamma_{pp}>0$, we could consider $\widetilde \vT:= \diag(\Gamma_{11},...,\Gamma_{pp})^{-1/2} \vT$, which is under the null hypothesis of equal covariances asymptotically normal distributed with expectation vector $\vnull_p$ and covariance/correlation matrix $\widetilde \vGamma:= \diag(\Gamma_{11},...,\Gamma_{pp})^{-1/2}\vGamma \diag(\Gamma_{11},...,\Gamma_{pp})^{-1/2}$. Then an asymptotic correct level $\alpha$ test would be given through $\max(\lvert\widetilde T_{1}\lvert ,....,\lvert\widetilde T_{p}\lvert)\geq z_{1-\alpha,2,\widetilde \vGamma}$. Here, with $z_{1-\alpha,2,\widetilde \vGamma}$ we denote a two-sided $(1-\alpha)$ equicoordinate quantile of $\mathcal{N}_p(\vnull_p,\widetilde \vGamma)$, see e.g. \cite{bretz2001}.\\
This allows us to  simultaneously consider the hypotheses of equal covariance and equal correlation. If $\max(\lvert\widetilde T_{d+1 }\lvert ,....,\lvert\widetilde T_{p}\lvert)\geq z_{1-\alpha,2,\widetilde \vGamma}$ then both groups have different dependence structures and therefore neither equal covariance matrices nor equal correlation matrices. In cases with $\max(\lvert\widetilde T_{d+1 }\lvert ,....,\lvert\widetilde T_{p}\lvert)< z_{1-\alpha,2,\widetilde \vGamma}$ but $\max(\lvert\widetilde T_{1}\lvert ,....,\lvert\widetilde T_{d}\lvert)\geq z_{1-\alpha,2,\widetilde \vGamma}$ both groups have the same dependence structure but different variances of the components, which implies equal correlation matrices but different covariance matrices. Moreover, from the fact which component of $\widetilde \vT$ is bigger than $z_{1-\alpha,2,\widetilde \vGamma}$, it gets clear where the differences might come from.\\\\
Unfortunately, the condition $\Gamma_{11},...,\Gamma_{pp}>0$ is difficult to verify, so again, a bootstrap approach would be a solution, which was, for example, also done in \cite{friedrich2017mats} or \cite{umlauft2019}. Since it is less complicated and at the same time has a better small sample approximation in \Cref{Simulations}, we introduce another Taylor-based Monte-Carlo approach.
To this end, we use that with matrix $\vA\in\mathbb R^{d\times p}$ given through
$\vA=\sum_{\ell=1}^{d} \ve_{\ell,d}\ \ve_{{\va}_{\ell},p}^\top $
it holds 

\[\sqrt{n_i}\left(
\begin{pmatrix}
\widehat V_{i 11}\\
\widehat V_{i 22}\\
\vdots\\
\widehat V_{i dd}\\
\widehat \vr_i
\end{pmatrix}-
\begin{pmatrix}
V_{i 11}\\
V_{i 22}\\
\vdots\\
V_{i dd}\\
\vr_i
\end{pmatrix}\right)= \begin{pmatrix}
\vA\\
\vM(\vv_i,\vr_i)
\end{pmatrix} \vY_i+ \frac 1 {\sqrt{n_i}}\begin{pmatrix}\vnull_d\\
f_{\widehat\vv_i,\widehat \vR_i}(\vY_i)
\end{pmatrix}+\lan_P(\sqrt{n_i^{-1}})\]
with 
$\vY_i\sim\mathcal{N}_{p}(\vnull_p,\vSigma_i)$. Based on this, we  generate
$\vY_i\sim\mathcal{N}_{p}(\vnull_{p},\widehat \vSigma_i)$ similar to \Cref{Taylor} and
transform it to \[\vY_i^{Tay}:= \begin{pmatrix}
\vA\\
\vM(\widehat \vv_i,\widehat \vr_i)
\end{pmatrix} \vY_i+ \frac 1 {\sqrt{n_i}}\begin{pmatrix}\vnull_d\\
f_{\widehat\vv_i,\widehat \vR_i}(\vY_i)
\end{pmatrix}.\]

With this transformed vectors we calculate $\vT^{1,Tay}=\sqrt{N}(n_1^{-1/2}\vY_1^{Tay}-n_{2}^{-1/2}\vY_2^{Tay})$, which has the same asmyptotic distribution as $\vT$, and repeat this $B$ times to get $\vT^{1,Tay},...,\vT^{B,Tay}$. Then, for $\ell=1,...,p$, we denote with $q_{\ell,\beta}^{Tay}$ the empirical $(1-\beta)$ quantile for $T_\ell^{1,Tay},..., T_\ell^{B,Tay}$. To control the family-wise type-I-error rate by $\alpha$, the appropriate $\beta$ can be found through
\[\widetilde \beta= \max\left(\beta \in \left\lbrace 0,\frac{1}{B},..., \frac{B-1}{B}\right\rbrace\Big\lvert \frac{1}{B} \sum\limits_{b=1}^B \max\limits_{\ell=1,...,p}\left(T_\ell^{b,Tay}>q_{\ell,\beta}^{Tay}\right)\leq \alpha\right).\]

Then we get an asymptotic correct level $\alpha$ test for the null hypothesis of equal covariance matrices by rejecting the null hypothesis if and only if 
\[\max\limits_{\ell=1,...,p}\left( T_\ell>q_{\ell,\widetilde \beta}^{Tay}\right)=1\]
or equivalent

\[\max\limits_{\ell=1,...,p} \left(\frac{T_\ell}{q_{\ell,\widetilde \beta}^{Tay}}\right)>1,\]
where we set $0/0:=1$. Each component of the vector-valued test statistic $\vT$ is treated in the same way since the same $\widetilde \beta<\alpha$ is used. This procedure allows the same conclusions on the reason for the rejection as the above approach base on the equicoordinate quantile.

Of course, this combined test could be generalized to compare more than two groups by using an appropriate Tukey-type contrast matrix (see \cite{konietschke2013}).

\section{Simulations}\label{Simulations}
{We analyze the type-I-error rate and power for different hypotheses to investigate the performance.
Here, we focus on hypotheses with an implemented algorithm to have an appropriate competitor.}
In the  \textsc{R}-package \textit{psych} by \cite{psych} {several} of these tests are included, like $\varphi_{Jennrich}$ from \cite{jennrich1970} and  $\varphi_{Steiger_{Fz}}$ from \cite{steiger1980}  for equality of correlation matrices. Testing whether the correlation matrix is equal to the identity matrix can be investigated with  $\varphi_{Bartlett}$ from \cite{bartlett1951} and again with $\varphi_{Steiger_{Fz}}$ resp. $\varphi_{Steiger}$. Hereby $\varphi_{Steiger_{Fz}} $ is the same test statistic as $\varphi_{Steiger}$ but uses a Fisher z-transformation on the vectorized correlation matrices. Therefore we consider the following hypotheses:
\begin{itemize}
\item[$A_{\vr}$:]Homogeneity of correlation matrices:
 $\mathcal H_0^{\vr}: \vR_1 =  \vR_2 $,\vspace*{0.1cm}
\item[$B_{\vr}$:]Diagonal structure of the covariance matrix $\mathcal  H_0^{\vr}: \vR_1 = \vI_p\text{ resp. }\vr_1=\vnull_{p_u}$,
\end{itemize}
with $\alpha=0.05$. Further hypotheses are investigated in the supplementary material {together with other settings and dimensions}.
The hypothesis matrices are chosen as the projection matrices $\vC(A_{\vr})={\vP_2}\otimes \vI_{p_u}$ and $\vC(B_{\vr})=\vI_{p_u}$ while $\vzeta$ is in both cases a zero vector with 
appropriate dimension.\\
We use 1,000 bootstrap steps for our parametric bootstrap, 10,000 simulation steps for the Monte-Carlo approach and 10,000 runs for all tests to get reliable results.
 For $\varphi_{Steiger}$ and $\varphi_{Steiger_{Fz}}$ {from \cite{psych}},  the actual test statistic is multiplied with the factor $({N-3})/{N}$. This approach is based on a specific result of the Fisher z-transformation of the correlation vector of normally distributed random vectors { for small sample sizes.
Asymptotically this factor has no influence, but it is also used for  $\varphi_{Steiger}$, where no Fisher z-transformation was done}. 
 
To get a better impression of the impact of such a multiplication, we also include our  ATS with parametric bootstrap using such multiplication and denote this with an m for multiplication. This also simplifies the comparison of the tests under equal conditions. At last, we simulated Monte-Carlo based ATS {using} our Fisher z-transformation. We denote it by ATSFz, while an additional version ATSFz-m is simulated, which is formed by multiplication with the factor {$(N-3)/N$}.\\
To have a comparative setting to \cite{sattler2022}  we used d=5 and therefore $p_u=10$, while for one group we have {$n\in \{25,50,125,250\}$} and for two groups we have  
$n_1=0.6\cdot N$ and $n_2=0.4\cdot N$ with $N \in\{50, 100, 250, 500\}$. We considered $5$-dimensional observations generated independently according to the model $\vX_{ik} = \mu_i + \vV^{1/2} \vZ_{ik}, i=1,\dots, a, k=1,\dots,n_i$ with $\vmu_1=(1^2,2^2,...,5^2)/4$ and error terms based on
\begin{itemize}
\item a standardized centred t-distribution with 9 degrees of freedom.
\item a standard normal distribution, i.e. $Z_{ikj} \sim  \mathcal {N}(0,1).$
\item a standardized centred skewed normal distribution with location parameter $\xi=0$, scale parameter $\omega=1$ and $\alpha=4$. The density of a skewed normal distribution is given through $\frac{2}{\omega} \varphi\left(\frac{x-\xi}{\omega}\right)\Phi\left(\alpha\left(\frac{x-\xi}{\omega}\right)\right)$, where $\varphi$ denotes the density of a standard normal distribution and $\Phi$ the according distribution function. 
\end{itemize}
 For $A_{\vr}$ we use $(\vV_{1})_{ij}=1-{\lvert i-j\lvert/2d}$  for the first group and for the second group we multiply this covariance matrix with $\diag(1,1.2,...,1.8)$. Thus we have a setting where the covariance matrices are different, but the correlation matrices are equal. To investigate $B_{\vr}$ we use the matrix,  $\vV=\diag(1,1.2,...,1.8)$.
\subsection{Type-I-error}
The results of hypothesis $A_{\vr}$ can be seen in \Cref{tab:SimA1Cor} for the Toeplitz covariance matrix. Here, the values in the $95\%$ binomial interval $[0.0458, 0.0543]$ are printed in bold.
It is interesting to note that the type-I-error rate of $\varphi_{Steifer_{Fz}}$ and $\varphi_{Jennrich}$ differs more and more from the $5\%$ rate for increasing sample sizes. Therefore, these tests should not be used, at least for our setting.
In contrast, all of our tests are too liberal but show a substantially better type-I-error rate. Moreover, these tests fulfil Bradley's liberal criterion (from \cite{bradley1978}) for $N$ larger than 50. {This criterion is often consulted by applied statisticians, for example in quantitative psychology. It states that a procedure is 'acceptable' if the type-I-error rate is between $0.5\alpha$ and $1.5\alpha$.} \\\\
 Similar to the results for covariance matrices, the bootstrap version has slightly better results than Monte-Carlo based tests, while the error rates get closer to the nominal level for larger sample sizes. In each setting, the Monte-Carlo based ATS formulated for Fisher z-transformed vectors has a better performance than the classical one, especially for smaller sample sizes.
\captionsetup[table]{position=below,skip=-0.1cm}
\setlength{\tabcolsep}{3.2pt}
\begin{table}[htbp]
\begin{center}
\begin{tiny}
\begin{tabular}{x{1.3pt}lx{1.3pt}c|c|c|cx{1.3pt}c|c|c|cx{1.3pt}c|c|c|cx{1.3pt}}\specialrule{1.3pt}{0pt}{0pt}
   \multicolumn{1}{x{1.3pt}lx{1.3pt}}{} &\multicolumn{4}{ cx{1.3pt}}{$t_9$}&\multicolumn{4}{cx{1.3pt}}{Normal}&\multicolumn{4}{cx{1.3pt}}{Skew Normal}\\
\specialrule{1.3pt}{0pt}{0pt}
    \hspace*{.1cm}N&   50&100&250&500&   50&100&250&500&   50&100&250&500
       \\ \specialrule{1.3pt}{0pt}{0pt}
\hspace*{-.2cm} ATS-Par& .0755 & .0657 & .0568 & \bf{.0538} & .0703 & .0627 &\bf{ .0524} & \bf{.0519} & .0734 & .0635 &\bf{ .0530} & .0567 \\  \hline
\hspace*{-.2cm} ATS-Par-m& .0653 & .0600 & .0553 & \bf{.0528} & .0609 & .0579 & \bf{.0502} & \bf{.0511} & .0626 & .0602 &\bf{ .0518 }& .0557 \\  \hline
\hspace*{-.2cm} ATS & .0818 & .0652 & .0581 & \bf{.0542} & .0776 & .0640 & \bf{.0527} & \bf{.0511} & .0798 & .0666 &\bf{ .0536} & .0567 \\   \hline
\hspace*{-.2cm} ATS-Tay  & .0603 &\bf{ .0528 }& \bf{.0523} &\bf{ .0511} & .0567 &\bf{ .0536} &\bf{ .0482} & \bf{.0495} & .0579 & \bf{.0536} & \bf{.0486} & \bf{.0539} \\   \hline
\hspace*{-.2cm}  ATSFz& .0781 & .0634 & .0568 &\bf{ .0540} & .0726 & .0628 & \bf{.0516} & \bf{.0508} & .0757 & .0659 &\bf{ .0528} & .0562 \\  \hline
\hspace*{-.2cm} ATSFz-m & .0683 & .0592 & .0556 &\bf{ .0530 }& .0623 & .0582 & \bf{.0503} & \bf{.0501} & .0642 & .0604 &\bf{ .0512} & .0554 \\ 
  \hline
 \hspace*{-.2cm} SteigerFz & .0931 & .1106 & .1303 & .1296 & .0908 & .1130 & .1249 & .1229 & .0902 & .1158 & .1177 &.1272 \\ 
\specialrule{1.3pt}{0pt}{0pt}
\end{tabular}
\end{tiny}
\end{center}
\caption{Simulated type-I-error rates ($\alpha=5\%$) in scenario $A_r$ ($\mathcal{H}_0^{\vr}:\vR_{1}=\vR_{2}$)  for ATS and Steiger's test.  The observation vectors have dimension 5, covariance matrix $(\vV_1)_{ij}=1-{\lvert i-j\lvert/2d}$ resp. $\vV_2=\diag(1,1.2,...,1.8)\vV_1$ and there is always the same relation between group samples size with $n_1:=0.6\cdot N$ resp. $n_2:=0.4\cdot N$.}\label{tab:SimA1Cor}\end{table}
  It can be seen that a correction factor, like $(N-3)/N$, which was used for ATS-Par-m,  clearly improves the small sample performance. 
 In contrast to the results from \cite{sattler2022}, for testing correlation matrices, the small sample approximation seems a bit worse, making such a correction factor useful. 
The best performance for smaller sample sizes can be seen for our Taylor-based Monte-Carlo approach, which has for {$N>50$} an error rate within the 95\% interval. 
 Therefore, for this setting, we recommend ATS-Tay, while for larger sample size, ATS-Par-m and ATSFz-m also show good results.\\
 
For hypothesis $B_{\vr}$, the results are included in \Cref{tab:SimBCor}.
Here, our Taylor-based approach has slightly conservative values, and therefore ATS-Para-m and ATSFz-m have the best results of our test statistics, while the results are slightly better than for hypothesis $A_{\vr}$.
 For example, the number of values in the $95\%$ binomial interval  $[0.0458, 0.0543]$, printed bold, clearly increases.
  With the correction factor, these tests have a better type-I-error rate than $\varphi_{Steiger}$ in all settings and are comparable to $\varphi_{Steiger_{Fz}}$.
\setlength{\tabcolsep}{3.1pt}
\begin{table}[htp]
\begin{center}
\begin{tiny}
\begin{tabular}{x{1.3pt}lx{1.3pt}c|c|c|cx{1.3pt}c|c|c|cx{1.3pt}c|c|c|cx{1.3pt}}\specialrule{1.3pt}{0pt}{0pt}
   \multicolumn{1}{x{1.3pt}lx{1.3pt}}{} &\multicolumn{4}{ cx{1.3pt}}{$t_9$}&\multicolumn{4}{cx{1.3pt}}{Normal} &\multicolumn{4}{cx{1.3pt}}{Skew Normal}\\
\specialrule{1.3pt}{0pt}{0pt}
  \hspace*{.1cm}N&   25&50&125&250&   25&50&125&250&   25&50&125&250
       \\ \specialrule{1.3pt}{0pt}{0pt}
\hspace*{-.2cm} ATS-Par & .0863 & .0620 & \bf{.0537} & \bf{.0501} & .0835 & .0605 & \bf{.0493} & .0559 & .0846 & .0612 & \bf{.0543} & \bf{.0509} \\  \hline
\hspace*{-.2cm}  ATS-Par-m & \bf{.0507} & .0448 & \bf{.0473} & \bf{.0471} & .0455 & .0422 & .0429 & \bf{.0517} & \bf{.0484} & .0440 & \bf{.0470} &\bf{ .0481} \\    \hline
\hspace*{-.2cm} ATS  &.0974 & .0646 &\bf{ .0533} & \bf{.0492} & .0940 & .0621 & \bf{.0506} & \bf{.0541} & .0961 & .0630 & \bf{.0531} & \bf{.0507} \\  \hline
\hspace*{-.2cm}  ATS-Tay   &\bf{ .0512} & .0394 & .0417 & .0431 & .0556 & .0436 & .0417 &\bf{ .0500} &\bf{ .0504} & .0408 & .0433 & .0456 \\  \hline
\hspace*{-.2cm}  ATSFz & .0916 & .0607 &\bf{ .0522} & \bf{.0490} & .0901 & .0605 & \bf{.0495} & \bf{.0541} & .0914 & .0606 & \bf{.0523} & \bf{.0499} \\ \hline
\hspace*{-.2cm}  ATSFz-m   & \bf{.0523} & \bf{.0459} & .0453 & \bf{.0458} & \bf{.0508} & .0436 & .0420 & \bf{.0512} & \bf{.0526} & .0442 & \bf{.0462} & \bf{.0462} \\  \hline
\hspace*{-.2cm} Steiger  & .0220 & .0326 & .0419 & \bf{.0457} & .0255 & .0355 & .0421 & \bf{.0507} & .0252 & .0333 & \bf{.0458} & \bf{.0479} \\   \hline
\hspace*{-.2cm}   SteigerFz  \hspace{-.1cm}  &\bf{ .0515} & \bf{.0471} & \bf{.0489} & \bf{.0491} & .0558 & \bf{.0517} & \bf{.0492} & \bf{.0536} & .0548 &\bf{.0486} & \bf{.0542} & \bf{.0499} \\  \hline
\hspace*{-.2cm}  Bartlett & \bf{.0470} & \bf{.0480} & \bf{.0488} & \bf{.0500} & \bf{.0525} & \bf{.0535} & \bf{.0479} & \bf{.0539} &\bf{ .0531} & .0456 & \bf{.0516} & \bf{.0506} \\  
\specialrule{1.3pt}{0pt}{0pt}
\end{tabular}
\end{tiny}
\end{center}
\caption{Simulated type-I-error rates ($\alpha=5\%$) in scenario $B_{\vr}$ ($\mathcal{H}_0^{\vr}:\vr_{1}=\vnull_{10}$)  for ATS, Steiger's and Bartlett's test. The observation vectors have dimension 5 and covariance matrix $(\vV)=diag(1,1.2,1.4,1.6,1.8)$.}\label{tab:SimBCor}
\end{table}

 Nevertheless, $\varphi_{Bartlett}$ is a test only developed for this special hypothesis and therefore has an excellent error rate through all distributions. But in addition to the type-I error rate, also other properties are highly relevant.

 \subsection{Power}
The ability to detect deviations from the null hypothesis is also an important test criterion. To this aim, we also investigate the power of some of the tests mentioned above. We choose a quite simple kind of alternative suitable for our situation. 
As covariance matrix we consider $\vV_1+\delta\cdot \vJ_d$ for $\delta \in [0,3.5]$ in hypothesis $A_{\vr}$ and for $\delta\in [0,0.75]$ in hypothesis $B_{\vr}$. 
 The reason for this considerable difference in the $\delta$ range is that for hypothesis $B_{\vr}$, the second summand changes the setting from uncorrelated to correlated. For hypothesis $A_{\vr}$, it just increases the correlations, which is clearly more challenging to detect. 
\captionsetup[figure]{position=below,skip=-0.1cm}
\begin{figure}[h]
\begin{minipage}[t]{0.99\textwidth}\vspace{0pt} 
\includegraphics[width=\textwidth,trim= 11.5mm 3mm 10.mm 6mm,clip]{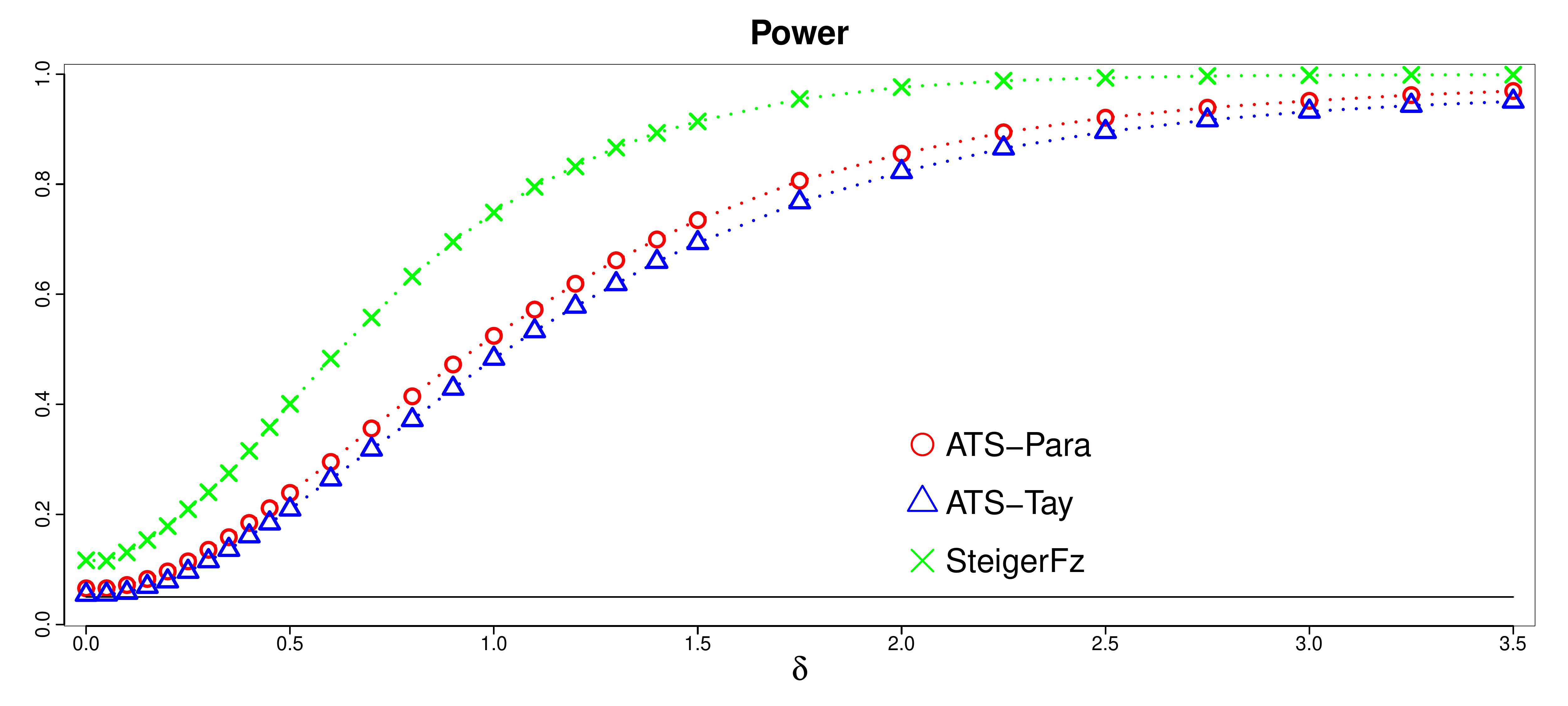} 
\end{minipage}\\ 
\begin{minipage}[t]{0.99\textwidth}\vspace{0pt} 
\includegraphics[width=\textwidth,trim= 11.5mm 4mm 10mm 5mm,clip]{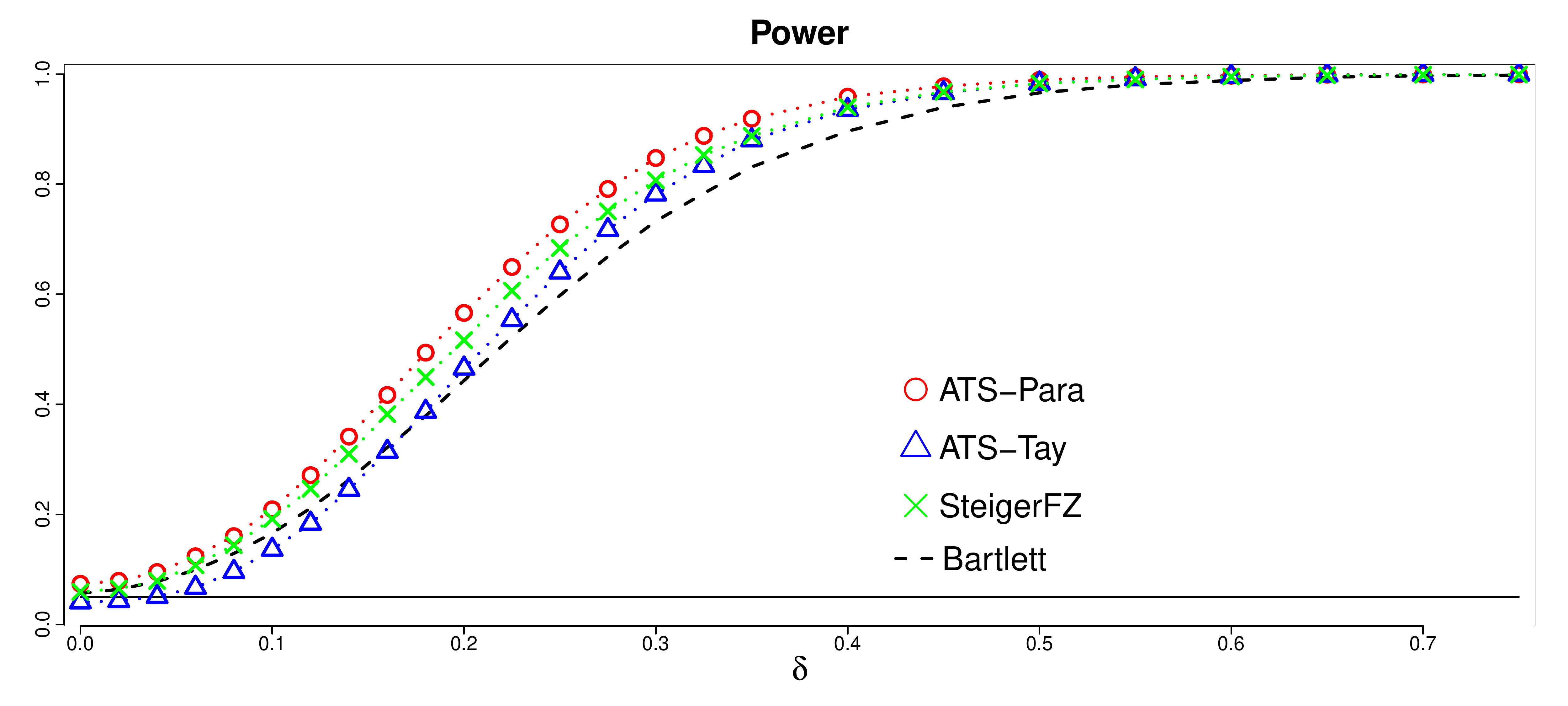} 
\end{minipage} %
\caption{ 
Simulated power curves of different tests for the hypothesis $A_{\vr}$ ($\mathcal H_0^{\vr}: \vR_1 =  \vR_2 $) above, and
hypothesis $B_{\vr}$ ($\mathcal H_0^{\vr}: \vr_1 =  \vnull_{10} $) below, for a 5-dimensional skewed normal distribution. For hypothesis $A_{\vr}$ it holds $n_1=60$, $n_2=40$ and the covariance matrix is
$(\vV_2)_{ij}=1-{\lvert i-j\lvert/2d}$ resp. $\vV_1=\vV_2+\delta \vJ_5$.  The covariance matrix  for hypothesis $B_{\vr}$ is $\vV=\vI_5+\delta \vJ_5$ and $n_1=50$.   }

\label{fig:PowerCorAB} 
\end{figure} 
Due to computational reasons, we simulate only one sample size, which is $N=250$  resp. $n_1=125$ and consider error terms based on skewed normal distribution, while results for the gamma distribution can be found in the supplementary material. We simulate only the tests with good results for their type-I-error rate, which were for $A_{\vr}$  $\varphi_{ATS_{\vr}^\dagger}$ and  $\varphi_{ATS_{\vr}-Tay}$ as well as $\varphi_{SteigerFZ}$ as competitor, despite its performance in \Cref{tab:SimA1Cor}.
Because of the similarity of the results from the parametric bootstrap and the more classical Monte-Carlo based approach, we do not consider further test statistics here. 
For hypothesis $A_{\vr}$ \Cref{fig:PowerCorAB} shows that the Taylor-approximation makes the test {slightly} less liberal, and therefore reduces the power. This effect can be seen as a shift and does not influence the slope. In general, the power of both approaches is quite good, also $\varphi_{SteigerFZ}$ has even higher power, which furthermore increases faster. But since this test becomes even more liberal for increasing sample size, this is not surprising and makes it for this setting not recommendable.
Based on the results from \Cref{tab:SimBCor} for hypothesis $B_{\vr}$ we consider$\varphi_{ATS_{\vr}^\dagger}$ and  $\varphi_{ATS_{\vr}-Tay}$ as well as $\varphi_{SteigerFz}$ and $\varphi_{Bartlett}$ while the setting is the same. 
In \Cref{fig:PowerCorAB} for hypothesis $B_{\vr}$ it can be seen that $\varphi_{ATS_{\vr}-Tay}$ has for smaller $\delta$ similar power as $\varphi_{Bartlett}$ which was specially developed for this hypothesis, while for larger deviation, the power of the Monte-Carlo approach increases. Here, $\varphi_{SteigerFZ}$ has slightly less power than the parametric bootstrap due to the slightly liberal behaviour of the bootstrap approach.\\

The type-I error rate and the power for both hypotheses show that our developed tests are useful in many situations, although partially large sample sizes are necessary for good results. This is a known fact for testing hypotheses regarding correlation matrices, which was, for example, mentioned in \cite{steiger1980}.
 Therefore the results of ATS-Tay investigating $A_{\vr}$ for smaller sample sizes are even more convincing.\\
\enlargethispage{\baselineskip}
{All in all,  the results of this section, together with the additional results from the supplement, allow us to give some general recommendations.
Since, SteigerFz has a type-I-error rate of more than 9\% in \Cref{tab:SimA1Cor} which also grows for increasing sample sizes, it is not useful apart from single hypotheses like $\mathcal{H}_0^{\vr}:\vr_{1}=\vnull_{10}$. This hypothesis can be checked with Bartlett, which had good results but only allows this hypothesis.

On the contrary, for all considered hypotheses, ATS-Par-m and ATS lead to good results for moderate to large sample sizes, while they are liberal for small sample sizes. Such liberality was, for example also mentioned in \cite{perreault2022} for tests based on Kendall's $\tau$. As intended, the bootstrap improve the behaviour for smaller sample sizes, but not enough.
In case of small sample sizes, the Taylor-based approach is recommended as it exhibited good small sample performance in all settings. Only for larger samples sizes it is outperformed by ATS-Par-m.\\
Although tests regarding correlation matrices are challenging and known to need large sample size, this lead to useful tests, which also provides more flexibility and possible applications than existing ones.}


\section{\textsc{Illustrative Data Analysis}}\label{Illustrative Data Analysis} 
To illustrate the method, we take a closer look at the EEG data set from the \textsc{R}-package \textit{manova.rm} by \cite{manova}. In this study from \cite{staffen2014},  conducted at the University Clinic of Salzburg (Department of Neurology), electroencephalography (EEG) data were measured from 160 patients with different diagnoses of impairments. These are Alzheimer's disease (AD), mild cognitive impairment (MCI), and subjective cognitive complaints (SCC). Thereby, the last diagnosis can be differentiated between subjective cognitive complaints with minimal cognitive dysfunction (SCC+) and without (SCC-).

MANOVA-based comparisons were already made in \cite{bathke2018}. However, covariance and correlation comparisons were only made in a descriptive way. We now complement their analyses.
In \Cref{tab:EEG1} the number of patients divided by sex and diagnosis can be found. Since in \cite{bathke2018} and \cite{sattler2022} no distinction between SCC+ and SCC- {was} made, we consider both together as diagnosis SCC. 

\begin{table}[h]

\centering
\begin{small}
\caption{Number of observations for the different factor level combinations of sex and diagnosis.}
\label{tab:EEG1}

\begin{tabular}{l|c|c|c|c|}
&AD&MCI&SCC+&SCC-\\
\hline
male&12&27&14&6\\
\hline
female&24&30&31&16\\
\hline
\end{tabular}
\end{small}
\end{table}

The observation vector's dimension is $d=6$, since there are two kinds of measurements (z-score for brain rate and Hjorth complexity) and three different electrode positions (frontal, temporal, and central), and therefore $p_u=15$. For the evaluation of our results, we should keep in mind that all sample sizes are rather small in relation to this dimension.
The considered hypotheses are:
\begin{itemize}
\item[a)] Homogeneity of correlation matrices between different diagnoses,
\item[b)] Homogeneity of correlation matrices between different sexes,

\end{itemize} 
while we will denote the corresponding hypothesis regarding the covariance matrix with $\mathcal{H}_0^{\vv}$.\\

In \cite{sattler2022}, homogeneity of covariance matrices between different diagnoses as well as different sexes were investigated. Here, we consider the more general hypothesis of equal correlation matrices between the diagnoses and the sexes. Thereby, it is interesting to compare the results from the homogeneity of covariance matrices with those from testing the homogeneity of correlation matrices. We expect higher p-values for equality of correlation through the larger hypothesis, but each rejection of equal correlation matrices directly allows us to reject the corresponding equality of covariance matrices. In \Cref{EEGResultate} for both hypotheses, the p-values for the ATS with parametric bootstrap are displayed, while for equality of correlations, we additionally use our test based on the Taylor-based Monte-Carlo approach. For all considered bootstrap tests, 10,000 bootstrap runs are done, as well as 10,000 Monte-Carlo-steps.\\

\captionsetup[table]{position=below,skip=.4cm}
\begin{table}[htp]
\centering
\begin{small}
\begin{tabular}{x{1.3pt}llx{1.3pt}cx{1.3pt}cx{1.3pt}cx{1.3pt}cx{1.3pt}}
\specialrule{1.3pt}{0pt}{0pt}

&&ATS-Par for $\mathcal{H}_0^{\vv}$ &ATS-Par   for $\mathcal{H}_0^{\vr}$&ATS-Tay   for $\mathcal{H}_0^{\vr}$ \\
 &&p-value   &  p-value&  p-value \\
\specialrule{1.3pt}{0pt}{0pt}
male&AD vs. MCI&.1000 & \textbf{.0389}&.0866  \\ \hline
male&AD vs. SCC& \textbf{.0452 } & \textbf{.0077}&\textbf{.0363}  \\ \hline
male&MCI vs. SCC & \textbf{.0289} & .0712&.1279 \\ \hline
female&AD vs. MCI&  .0613 & .3626&.3749  \\ \hline
female&AD vs. SCC& \textbf{.0128} & .4937&.4892  \\ \hline
female&MCI vs. SCC & .5656 & .8788&.8693 \\ \hline
AD& male vs. female & .1008 & \textbf{.0290}&.0658 \\ \hline
MCI& male vs. female   & .2455 & .6702&.6577   \\ \hline
SCC& male vs. female   & .2066 & .1744&.2238 \\ 

\specialrule{1.3pt}{0pt}{0pt}\end{tabular}
\caption{P-values of different ATS for testing equality of correlation matrices and equality of covariance matrices. }\label{EEGResultate}
\end{small}

\end{table}

Interestingly, for two hypotheses, the p-value ATS-Par for equal correlation matrices is rejected at level $5\%$, while we could not reject the smaller hypothesis of equal covariance matrices. But for both hypotheses, the sample sizes are rather small with $N<40$. Our simulation results for $d=5$ showed that the ATS with parametric bootstrap is too liberal for small sample sizes, which might be the reason why the larger hypotheses can be rejected, and the smaller ones can not. But also ATS-Tay, which had a better small sample performance, rejected $\mathcal{H}_0^{\vr}$ one time, while $\mathcal{H}_0^{\vv}$ was not rejected.
Moreover, it can be seen that the difference between some hypotheses is relatively small, like for the first three hypotheses, but it can also be quite large as for the comparison of women with $AD$ and with SCC. This shows that from a rejection of $\mathcal{H}_0^{\vv}$, no conclusion on $\mathcal{H}_0^{\vr}$ can be drawn.

Due to the small sample size in relation to the dimension of the vectorized correlation matrix $p_u=15$, even the results of ATS-Tay are a bit liberal. But nevertheless, the corresponding rejections allow various ideas for further investigations.

\section{\textsc{Conclusion \& Outlook}}\label{Conclusion & Outlook}

In the present paper, a series of new test statistics was developed to check general null hypotheses formulated in terms of correlation matrices. The proposed method can be used for many popular quadratic forms, and the low restrictions allow their application for a variety of settings. In fact, existing procedures have more restrictive assumptions or could only be used for special hypotheses or settings. The diversity of possible null hypotheses, these low restrictions and the easy possibility of expansion, like using a Fisher z-transformation, make our approach attractive.\\
We proved the asymptotic normality of the estimation error of the vectorized empirical correlation matrix under the assumption of finite fourth moments of all components. Based on this, test statistics from a quite general group of quadratic forms were presented, and a bootstrap technique was developed to match their asymptotic distribution. To investigate the properties of the corresponding bootstrap test, an extensive simulation study was done. This also allows checking our test statistic based on a Fisher z-transformation. Here, hypotheses for one and two groups were considered, and the type-I-error control and the power to detect deviations from the null hypothesis were compared to existing test procedures. The developed tests outperform existing procedures for some hypotheses, while they offer good and interesting alternatives for others. Also, it is a known fact that for testing correlations, a large sample size is required. Here, for group sample sizes larger than 50, Bradley's liberal criterion was often fulfilled. 
Especially our Taylor-based approach was convincing for small sample sizes with multiple groups.
An illustrative data analysis completed our investigations.\\

In future research, {we will take a closer look at our newly proposed combined test for simultaneously testing the hypotheses of equal covariance matrices and correlation matrices. Thereto extensive simulations will be done, to, among other things, examine the performance of the corresponding Taylor-approach. This relates to studying} the large number of possible null hypotheses included in our model. For
example, tests for given covariance structures (such as compound symmetry or autoregressive) or structures of correlation matrices with unknown parameters are of great interest. Since there are heterogeneous versions of many popular structures, testing such structures or other patterns can be seen as a combination of testing hypotheses regarding covariance matrices and hypotheses regarding correlation matrices. 
Moreover, an investigation of Monte-Carlo approaches for a higher order of Taylor approximation for real small sample sizes could be interesting.

\section{Statements and Declarations}
\subsection{\textsc{Acknowledgment}}
{We like to thank two anonymous referees and the editor. A special thank goes to the Associate Editor for suggesting the idea of simultaneously studying covariances and correlations.
Moreover, we} would like to thank the German Research Foundation for the support received within project PA 2409/3-2.
\subsection{Competing Interests}
The authors report there are no competing interests to declare.

\newpage
\part*{Appendix}

\section{Further test statistics}
Here we want to introduce two further approaches that define corresponding quadratic form base test procedures.
\subsection{Fisher z transformation}
The Fisher z transformation, based on the function $f:\mathbb{R}{\to} \mathbb{R}$, $x\mapsto  \frac 1 2 \ln\left(\frac{1+x}{1-x}\right)$ is frequently used to transform correlation coefficients. Working with vectorized covariance matrices, this means transforming $\vr$ and afterwards multiplicating with the hypothesis matrix. Since this approach was too liberal in our simulations, which was similarly mentioned in \cite{omPa:2012}, we use it in another way.
With $\vf:\mathbb{R}^{ap_u}{\to } \mathbb{R}^{ap_u}$, $(x_1,...,x_{ap_u}){\mapsto} \left( \frac 1 2 \ln\left(\frac{1+x_1}{1-x_1}\right),...,\frac 1 2 \ln\left(\frac{1+x_{ap_u}}{1-x_{ap_u}}\right)\right)$ and the delta method,  we get
\[\sqrt{N} (\vf(\vC\widehat \vr) -\vf(\vzeta))\stackrel{\mathcal H_0}{=}\sqrt{N} (\vf(\vC\widehat \vr) -\vf(\vC\vr))\stackrel{\mathcal {D}}{\longrightarrow} {\mathcal{N}_{m}\left(\vnull_{m},\vf'(\vC\vr)\vC\vUpsilon\vC^\top \vf'(\vC\vr)\right)},\]

while $ \vf'(x_1,...,x_{ap_u})=\diag(1-x_1^2,....,1-x_{ap_u}^2)^{-1}$. Through the {consistency} of $\widehat \vUpsilon$ we can estimate this unknown covariance matrix through   
$\vf'(\vC\widehat\vr)\vC\widehat\vUpsilon\vC^\top  \vf'(\vC\widehat \vr)$. 
This allows us to formulate appropriate quadratic forms to check the null hypothesis $\mathcal{H}_0:\vC\vr=\vzeta$. Another consistent estimator under the null hypothesis would be
$ \vf'(\vzeta)\vC\widehat\vUpsilon \vC^\top  \vf'(\vzeta)$, but we expect less power to detect derivation from the null hypothesis.
\subsection{Wild Bootstrap}
With i.i.d. random weights  $W_{i1},...,W_{i n_i}$, $i=1,...,a,$ independent of the data, with $\E(W_{i1})=0$ and $\Var(W_{i1})=1$ and $\widetilde\vX_{ik}:=\vX_{ik}-\overline \vX_i$ we define the wild bootstrap sample through
\[\vY_{i k}^\star=W_{ik}\cdot  \vM(\widehat \vv_i,\widehat \vr_i)\left[\vech(\widetilde \vX_{ik} \widetilde \vX_{ik}^\top)-n_i^{-1} \sum\nolimits_{\ell=1}^{n_i}\vech(\widetilde \vX_{i\ell}\widetilde {\vX}_{i\ell}^\top) \right].\]
Hereby we multiplicated the wild bootstrap sample from \cite{sattler2022}, which we here notated with $\vZ_{ik}^\star$,  with the matrix $\vM(\widehat \vv_i,\widehat \vr_i)$ to adapted it for the required covariance matrix $\vUpsilon_i$. With $\widehat{\vSigma}_i^\star$ as the empirical covariance matrix of $\vY_{i1}^\star,...,\vY_{i n_i}^\star$ and $\widehat{\vSigma}^\star:=\bigoplus_{i=1}^a N/n_i\cdot \widehat{\vSigma}_i^\star$, we can formulate the wild bootstrap versions of a quadratic form by

\[\begin{array}{cl}Q_{\vr}^\star &=N[ \vC\overline \vY^\star  ]^\top \vE(\vC,\widehat \vUpsilon^\star)[ \vC\ \overline \vY^\star  ].
\end{array}\]
The following Lemma allows us to use these bootstrap versions to calculate critical values for tests.

\begin{Le}\label{WBTheorem}
If  Assumption (A1) is fulfilled, it holds: {Given the data, the conditional distribution of}\\
\begin{itemize}

\item[(a)]  { $\sqrt{N}\ \overline \vY_i^\star $, for i=1,...a, converges weakly to $ \mathcal{N}_{p_u}\left(\vnull_{p_u},{\kappa_i}^{-1}\vUpsilon_i\right)$ in probability}.
\item[(b)]  {$\sqrt{N}\ \overline \vY^\star $  converges weakly to $ \mathcal{N}_{ap_u}\left(\vnull_{a p_u}, \vUpsilon\right)$ in probability.} 
\end{itemize}
{
Since we have $\widehat \vUpsilon_i^\star  \stackrel{\mathcal{P}}{\to} \vUpsilon_i$ and $\widehat \vUpsilon^\star  \stackrel{\mathcal{P}}{\to}  \vUpsilon$ the unknown covariance matrices can be estimated through these estimators.}\\
\begin{proof}
Since it holds $\vY_{ik}^\star\stackrel{\mathcal{D}}{=}\vM(\widehat \vv_i,\widehat \vr_i)\vZ_{ik}^\star$ and therefore $\overline \vY_{i}^\star\stackrel{\mathcal{D}}{=}\vM(\widehat \vv_i,\widehat \vr_i)\overline \vZ_{i}^\star$ we can use part (a) of Theorem 4 from \cite{sattler2022}. With the consistency of $\vM(\widehat \vv_i,\widehat \vr_i)$ from Slutzky's theorem it directly follows 
\[\sqrt{N}\ \overline \vY_{i}^\star\stackrel{\mathcal{D}}{=} \vM(\widehat \vv_i,\widehat \vr_i) \sqrt{N}\ \overline \vZ_{i}^\star\stackrel{\mathcal{D}}{\to} \mathcal{N}_{p_u}(\vnull_{p_u},\kappa_i^{-1}\vM( \vv_i, \vr_i)\vSigma_i \vM( \vv_i, \vr_i)^\top)=\mathcal{N}_{p_u}(\vnull_{p_u},\kappa_i^{-1} \vUpsilon_i).\]

Similar we can show the consistency of  $\widehat\vSigma^\star(\vY)$ the empirical covariance matrix for $\vY_{i1}^\star,...,\vY_{in_i}^\star$. {The empirical covariance matrix $\widehat\vSigma^\star(\vZ)$ for $\vZ_{i1}^\star,...,\vZ_{in_i}^\star$ fulfills $\widehat \vSigma_i^\star(\vZ)\stackrel{\mathcal{P}}{\to}\vSigma_i^\star(\vZ)$ and therefore with Slutzky's theorem it holds}
\[\widehat \vSigma_i^\star(\vY)\stackrel{\mathcal{D}}{=}\vM(\widehat \vv_i,\widehat \vr_i) \widehat \vSigma_i^\star(\vZ) \vM(\widehat \vv_i,\widehat \vr_i)^\top\stackrel{\mathcal{P}}{\to}\vM( \vv_i, \vr_i)\vSigma_i\vM( \vv_i, \vr_i).\]

The results from part (b) follow through the independence of groups.
\end{proof}
\end{Le}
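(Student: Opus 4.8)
The plan is to reduce the whole statement to the analogous wild bootstrap result for vectorized \emph{covariance} matrices from \cite{sattler2022} and then transport it through the linear map $\vM(\widehat\vv_i,\widehat\vr_i)$. The structural observation that makes this work is that, writing $\vZ_{ik}^\star:=W_{ik}\bigl[\vech(\widetilde\vX_{ik}\widetilde\vX_{ik}^\top)-n_i^{-1}\sum_{\ell=1}^{n_i}\vech(\widetilde\vX_{i\ell}\widetilde\vX_{i\ell}^\top)\bigr]$ for the covariance-type wild bootstrap variables of \cite{sattler2022}, one has the \emph{exact} identity $\vY_{ik}^\star=\vM(\widehat\vv_i,\widehat\vr_i)\,\vZ_{ik}^\star$, since $\vM(\widehat\vv_i,\widehat\vr_i)$ is a function of the original data only and does not involve the random weights. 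Hence, conditionally on the data, $\sqrt{N}\,\overline\vY_i^\star=\vM(\widehat\vv_i,\widehat\vr_i)\,\sqrt{N}\,\overline\vZ_i^\star$, and the empirical covariance matrix of the $\vY_{ik}^\star$ equals $\widehat\vUpsilon_i^\star=\vM(\widehat\vv_i,\widehat\vr_i)\,\widehat\vSigma_i^\star(\vZ)\,\vM(\widehat\vv_i,\widehat\vr_i)^\top$, where $\widehat\vSigma_i^\star(\vZ)$ is the empirical covariance of the $\vZ_{ik}^\star$.

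First I would invoke part~(a) of Theorem~4 of \cite{sattler2022}: given the data, $\sqrt{N}\,\overline\vZ_i^\star$ converges weakly in probability to $\mathcal N_p(\vnull_p,\kappa_i^{-1}\vSigma_i)$, and $\widehat\vSigma_i^\star(\vZ)\stackrel{\mathcal P}{\to}\vSigma_i$. Next I would record $\vM(\widehat\vv_i,\widehat\vr_i)\stackrel{\mathcal P}{\to}\vM(\vv_i,\vr_i)$, which follows from $(\widehat\vv_i,\widehat\vr_i)\stackrel{\mathcal P}{\to}(\vv_i,\vr_i)$ and the continuity of $\vM$; the only point to verify is that the factor $\diag\bigl(\vech((v_{i11},\dots,v_{idd})^\top(v_{i11},\dots,v_{idd}))\bigr)^{-1/2}$ is continuous at the true parameter, which holds because $V_{i11},\dots,V_{idd}>0$ by assumption. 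A conditional version of Slutsky's lemma then gives
\[\sqrt{N}\,\overline\vY_i^\star\stackrel{\mathcal D}{\to}\mathcal N_{p_u}\bigl(\vnull_{p_u},\kappa_i^{-1}\vM(\vv_i,\vr_i)\vSigma_i\vM(\vv_i,\vr_i)^\top\bigr)=\mathcal N_{p_u}(\vnull_{p_u},\kappa_i^{-1}\vUpsilon_i)\]
in probability, which is (a); likewise $\widehat\vUpsilon_i^\star\stackrel{\mathcal P}{\to}\vM(\vv_i,\vr_i)\vSigma_i\vM(\vv_i,\vr_i)^\top=\vUpsilon_i$ by the continuous mapping theorem. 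Part~(b) and $\widehat\vUpsilon^\star\stackrel{\mathcal P}{\to}\vUpsilon$ then follow by stacking over $i$ and using the independence of the groups: the joint conditional law of $\sqrt{N}\,\overline\vY^\star$ factorises and converges to $\bigoplus_{i=1}^a\mathcal N_{p_u}(\vnull_{p_u},\kappa_i^{-1}\vUpsilon_i)=\mathcal N_{ap_u}(\vnull_{ap_u},\vUpsilon)$, while $\widehat\vUpsilon^\star=\bigoplus_{i=1}^a N n_i^{-1}\widehat\vUpsilon_i^\star\stackrel{\mathcal P}{\to}\bigoplus_{i=1}^a\kappa_i^{-1}\vUpsilon_i=\vUpsilon$ by (A1).

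The one genuinely delicate step is the conditional Slutsky argument: here $\vM(\widehat\vv_i,\widehat\vr_i)$ is random (a function of the data) while the conditional weak convergence of $\sqrt{N}\,\overline\vZ_i^\star$ holds only in probability over the data, so the classical Slutsky lemma does not apply verbatim. I would make this rigorous with a subsequence argument: along any subsequence extract a further subsequence on which $\vM(\widehat\vv_i,\widehat\vr_i)$ converges almost surely to $\vM(\vv_i,\vr_i)$ and the conditional CLT for $\sqrt{N}\,\overline\vZ_i^\star$ holds for almost every data sequence, apply the ordinary Slutsky lemma conditionally along that subsequence, and conclude via the characterisation of convergence in probability through almost-sure convergence of subsequences of, say, the bounded-Lipschitz distance between the conditional law of $\sqrt{N}\,\overline\vY_i^\star$ and the target Gaussian. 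Everything else is routine matrix algebra and bookkeeping.
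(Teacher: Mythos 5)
Your proposal is correct and follows essentially the same route as the paper: it reduces the claim to part (a) of Theorem 4 of \cite{sattler2022} via the identity $\vY_{ik}^\star=\vM(\widehat\vv_i,\widehat\vr_i)\vZ_{ik}^\star$, applies a (conditional) Slutsky argument using the consistency of $\vM(\widehat\vv_i,\widehat\vr_i)$, and obtains part (b) and the consistency of $\widehat\vUpsilon^\star$ from the independence of the groups. Your added care about the continuity of the diagonal scaling at the true parameter and the subsequence justification of the conditional Slutsky step only makes explicit what the paper's proof leaves implicit.
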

   \section{Proofs from the main paper}

\begin{proof}[Proof of \Cref{CorTheorem1}]\label{proofmain}

This proof is based on the proof from \citet{browne} and \citet{nel1985}, where a similar situation is considered. Some {adaptions} must be done because we are just interested in the matrix's upper triangular.  \\\\

With $\vDelta_i:= \sqrt{n_i}(\widehat \vV_i- \vV_i)$ and  $\vU_i=\vV_{i,0}^{-1/2} \vDelta_i \vV_{i,0}^{-1/2}$, it can be calculated\\

$\begin{array}{ll}\widehat \vR_i&= \widehat \vV_{i,0}^{-1/2}  \widehat\vV_i \widehat \vV_{i,0}^{-1/2}\\&=\left(\vV_{i,0}+\frac 1 {\sqrt{n_i}} \vDelta_{i,0}\right)^{-1/2} \left(\vV_{i}+\frac{1}{\sqrt{n_i}} \vDelta_i\right)\left(\vSigma_0+\frac 1 {\sqrt{n_i}} \vDelta_{i,0}\right)^{-1/2}\\
[1.1ex]&= \left(\vI_p+\frac 1 {\sqrt{n_i}} \vU_{i,0}\right)^{-1/2} \left(\vR_i+\frac{1}{\sqrt{n_i}}\vU_i\right)\left(\vI_p+\frac 1 {\sqrt{n_i}} \vU_{i,0}\right)^{-1/2}.\end{array}$\\\\

The Taylor series of $x\mapsto x^{-1/2}$  in point $1$ leads to $x^{-1/2}=1-(x-1)/2+\frac 3 8 (x-1)^2+\Lan((x-1)^3)$.

For diagonal matrices, we can use this Taylor expansion for each component separately, thereby getting a result as considered in the above equation.

{We first consider the corresponding remainder by using the single components of $1+n_i^{-1/2} \vU_{i,0}$ for $x$}. Since $\vU_{i,0}$ converges to a normally distributed random variable with independent components, from Slutzky's theorem, we know that the remainder is $\lan_P\left(n_i^{-q-1/2}\right)$ for some $q\in(0,1)$.
This leads to

\[\left(\vI_p+\frac 1 {\sqrt{n_i}} \vU_{i,0}\right)^{- 1/ 2}=\left(\vI_p-\frac 1 {2\sqrt{n_i}} \vU_{i,0}\right)+\frac 3 8\left(\frac 1 {\sqrt{n_i}} \vU_{i,0}\right)^2+\lan_P\left(n_i^{-q-1/2}\right)\] and hence\\\\
$\begin{array}{lll}\widehat \vR_i
&=& \left(\vI_p-\frac 1 {2\sqrt{n_i}} \vU_{i,0}\right)\left(\vR_i+\frac{1}{\sqrt{n_i}}\vU_i\right)\left(\vI_p-\frac 1 {2\sqrt{n_i}} \vU_{i,0}\right)
\\[1.3ex]&& +\frac 3 8\left(\frac 1 {\sqrt{n_i}} \vU_{i,0}\right)^2\vR_i+\frac 3 8\vR_i\left(\frac 1 {\sqrt{n_i}} \vU_{i,0}\right)^2

+\lan_P\left(n_i^{-q-1/2}\right)
\\[1.3ex]
&=& \vR_i+\frac{1}{\sqrt{n_i}}\vU_i
-\frac 1 {2\sqrt{n_i}} \vU_{i,0}\vR_i-\frac 1 {2\sqrt{n_i}}\vR_i \vU_{i,0} +\lan_P\left(n_i^{-q-1/2}\right)\\&&
+\frac 1 {4{n_i}} \vU_{i,0}\vR_i \vU_{i,0}
-\frac 1 {2{n_i}} \vU_{i,0}\vU_i
+\frac 3 {8n_i}\left(\vU_{i,0}\right)^2\vR_i+\frac 3 {8n_i}\vR_i\left( \vU_{i,0}\right)^2

-\frac{1}{2{n_i}}\vU_i\vU_{i,0}

,\end{array}$\\\\\\
Here, we used again that $\vU_{i,0}$ and $\vU_{i}$ converge to normal distributed random variables. Again with Slutzky's theorem, the product of such a random variable and an expression like $\lan_P\left(n_i^{-q-1/2}\right)$ has to be also $\lan_P\left(n_i^{-q-1/2}\right)$. Therefore many parts of the initial product are now cumulated in  $\lan_P\left(n_i^{-q-1/2}\right)$. For the 
derivation of the Taylor-base Monte-Carlo approach, we will use the above expression, but for the asymptotic normality, we use Slutzky's theorem and get \\\\
$\widehat \vR_i
= \left(\vI_p-\frac 1 {2\sqrt{n_i}} \vU_{i,0}\right)\left(\vR_i+\frac{1}{\sqrt{n_i}}\vU_i\right)\left(\vI_p-\frac 1 {2\sqrt{n_i}} \vU_{i,0}\right)+\lan_P\left(n_i^{-q-1/2}\right)
$.\\\\

Multiplication with $\sqrt{n_i}$ leads to\\

$
\sqrt{n_i}\ \widehat \vR_i=\sqrt{n_i}\ \vR_i+\left[\vU_i-\frac 1 2\left(\vU_{i,0 }\vR_i +\vR_i\vU_{i,0}\right)\right] 
+ \lan_P\left(n_i^{-q}\right).
$\\\\

We define
\[\vM_1=\sum\nolimits_{\ell=1}^{p_u} \ve_{\ell,p_u} (\ve_{{\vh_1}_{\ell},p}+\ve_{{\vh_2}_\ell,p})^\top\] and

  \[\vM_2:= \sum\nolimits_{\ell=1}^p \ve_{\ell,p} \ \ve_{{\vh_4}_\ell,p}^\top\quad\vM_3:=\sum\nolimits_{\ell=1}^p \ve_{\ell,p}\ \ve_{{\vh_3}_\ell,p}^\top\]\[ \vM_4:=\vM_2+\vM_3\quad \vM_5:=\diag(\vech(\vI_d)), \]
  
  with  $\vh_3=\vech(\vH)$ and $\vh_4=\vech(\vH^\top)$. With these matrices, it is easy to check that the following equations 
\[\begin{array}{ll}\vech(\vU_{i,0} \vR_i)&=\diag(\vech(\vR_i))  \vM_2 \vech(\vU_{i,0}),\\ \vech(\vR_i\vU_{i,0})&=\diag(\vech(\vR_i))\cdot \vM_3\Cdot \vech(\vU_{i,0})\end{array}\]
and \[\vech(\vU_{i,0} \vR_i) +\vech(\vR_i\vU_{i,0})=\vech(\vR_i)) \left(  \vM_2 + \vM_3\right)  \]

hold, and therefore with $\vech(\vU_{i,0})=\vM_5\vech(\vU_{i})$ we get\\\\
$\begin{array}{ll}
&\sqrt{n_i}\vech(\widehat \vR_i-\vR_i)\\
=&\left[\vech(\vU_i)-\frac 1 2\left(\vech(\vU_{i,0} \vR_i) +\vech(\vR_i\vU_{i,0})\right)\right]+\lan_P\left(n_i^{-q}\right)\\
=&\left[\vech(\vU_i)-\frac 1 2\diag(\vech(\vR_i)) \left(  \vM_2 + \vM_3\right)  \vech(\vU_{i,0})  \right]+\lan_P\left(n_i^{-q}\right)\\
=&\left[\vech(\vU_i)-\frac 1 2 \diag(\vech(\vR_i)) \left(  \vM_2  + \vM_3\right) \vM_5   \vech(\vU_i)  \right]+\lan_P\left(n_i^{-q}\right)\\
=&\left[\vI_{p}-\frac 1 2 \diag(\vech(\vR_i))  \left( \vM_2 + \vM_3\right)\vM_5  \right]\ \vech(\vU_i) +\lan_P\left(n_i^{-q}\right).\\
\end{array}$\\\\

Multiplication with the matrix $\vM_5$ changes nothing in this case because it just picks the columns unequal to zero and drops the rest.
So all in all  with $\vM_4=\vM_2+\vM_3$  it holds\\

$ \begin{array}{l}
\sqrt{n_i}\vech(\widehat \vR_i-\vR_i)
=\left[\vI_{p}-\frac 1 2 \diag(\vech(\vR_i))  \vM_4   \right] \vech(\vU_i) +\lan_P\left(n_i^{-q}\right)
\end{array}$\\

Now to adopt this result for the upper-half-vectorization, we use the particular elimination matrix $\vL_p^u$ which gives a connection between $\vech$ and $\vech^-$

$ \begin{array}{ll}&\sqrt{n_i}(\widehat{ \vr}_i-\vr_i)\\
=&\vL_p^u\left[\vI_{p}-\frac 1 2 \diag(\vech(\vR_i))  \vM_4   \right] \vech(\vU_i) +\lan_P\left(n_i^{-q}\right)\\[1.1ex]
=&\left[\vL_p^u-\frac 1 2 \diag(\vr_i)\vL_p^u  \vM_4   \right] \vech(\vU_i) +\lan_P\left(n_i^{-q}\right)\\[1.1ex]
=&\left[\vL_p^u-\frac 1 2 \diag(\vr_i) \vM_1   \right] \vech(\vU_i) +\lan_P\left(n_i^{-q}\right).

\end{array}$\\\\\\
Here, we used the relation $\vL_p^u  \vM_4=\vM_1$ and because of 
\[ \vech(\vU_i)= \diag(\vech((v_{i11},...,v_{i dd})^\top (v_{i11},...,v_{i dd})))^{-\frac 1 2}\vech(\vDelta_i)\] it is useful to define
\[\vM(\vv_i,\vr_i)=\left[\vL_p^u-\frac 1 2\diag(\vr_i)\vM_1   \right]\diag(\vech((v_{i11},...,v_{i dd})^\top (v_{i11},...,v_{i dd})))^{-\frac 1 2}.\]\\
Therefore, it holds
\[\sqrt{n_i}(\widehat{ \vr}_i-\vr_i)=
{\vM(\vv_i,\vr_i)}\vech(\vDelta_i) +\lan_P(1)\]
\\\
 and because of Theorem 1 from \cite{sattler2022} it follows
\[\sqrt{n_i}(\widehat \vr_i-\vr_i)\stackrel{\mathcal{D}}{\To}\mathcal{N}_{p_u}\Big(\vnull_{p_u},\underbrace{\vM(\vv_i,\vr_i)\vSigma_i\vM(\vv_i,\vr_i)^\top}_{=:\vUpsilon_i}\Big).\]

We could get the same result by using the {delta} method on the results for the vectorized covariance matrices. {We believe} the approach of \citet{browne}, together with \citet{nel1985}, is preferable due to its stepwise structure. Therefore it is more suitable to get an understanding of the used matrices. Another important argument is that we later have a Monte-Carlo approximation based on this Taylor approximation.
\end{proof}

\begin{proof}[Proof of \Cref{Verteilung}]
With the result from \Cref{CorTheorem1}, the asymptotic distribution of the quadratic form follows exactly from Theorem 2 from \cite{sattler2022}.
\end{proof}

\begin{proof}[Proof of \Cref{PBTheorem1}]
We only prove the first part because the second part directly follows the single groups' results.\\
For an application of the multivariate Lindeberg-Feller-Theorem (given the data), we need to check the conditions. As $\vY_{ik}^\dagger$ under $\vX$ is $p_u$-dimensional normal distributed with  expectation $\vnull_{p_u}$ and variance  $\widehat{\vUpsilon}_{i}$ it holds:\\

$\begin{array}{lll}1.)\hspace*{-0.3cm}&\textcolor{white}{=} &\sum\limits_{k=1}^{n_i}\E\left(\frac {\sqrt{N}} {n_i}\vY_{i k}^\dagger\Big\lvert \vX\right)=\sum\limits_{k=1}^{n_i}\frac {\sqrt{N}} {n_i}\cdot \E\left(\vY_{i k}^\dagger\Big\lvert \vX\right)=\vnull.\end{array}$\\\\
$\begin{array}{lll}2.)\hspace*{-0.3cm}&\textcolor{white}{=} &\sum\limits_{k=1}^{n_i}\Cov\left(\frac {\sqrt{N}} {n_i}\vY_{i k}^\dagger\Big\lvert \vX\right)= \sum\limits_{k=1}^{n_i}\frac {{N}} {n_i^2}  \widehat\vUpsilon_{i}\stackrel{\mathcal P}{\To} \frac 1 {\kappa_i} \vUpsilon_{i}.\end{array}$\\

$\begin{array}{lll}3).\hspace*{-0.3cm}&& \lim\limits_{N\to \infty}\sum\limits_{k=1}^{n_i} \E\left( \Big \lvert \Big \lvert \frac {\sqrt{N}} {n_i}\vY_{i k}^\dagger  \Big \lvert \Big \lvert^2\cdot \ind_{ \big \lvert \big \lvert \frac {\sqrt{N}} {n_i}\vY_{i k}^\dagger  \big \lvert \big \lvert>\delta}\ \Big\lvert \vX\right)\\[1.4ex]

&=& \lim\limits_{N\to \infty}  \frac {{N}} {n_i^2}\sum\limits_{k=1}^{n_i} \E\left( \big  \lvert \big  \lvert \vY_{i 1}^\dagger  \big  \lvert \big  \lvert^2\cdot \ind_{ \lvert \lvert \vY_{i 1}^\dagger   \lvert  \lvert>\delta \frac {n_i}{\sqrt{N}} }\ \Big\lvert \vX\right)\\[1.6ex]
&=& \frac 1 {\kappa_i}  \cdot \lim\limits_{N\to \infty}  \E\left( \big  \lvert \big  \lvert \vY_{i 1}^\dagger\big \lvert \big \lvert^2\cdot \ind_{  \lvert \lvert \vY_{i 1}^\dagger\lvert \lvert>\delta\frac {n_i}{\sqrt{N}} }\ \Big\lvert \vX\right)\\[1.6ex]
&\leq &\frac 1 {\kappa_i}  \cdot \lim\limits_{N\to \infty} \sqrt{ \E\left(  \lvert   \lvert \vY_{i 1}^\dagger \lvert \lvert^4 \ \lvert \vX\right)}\cdot \sqrt{\E\left(\ind_{  \lvert  \lvert \vY_{i 1}^\dagger \lvert  \lvert>\delta\frac {n_i}{\sqrt{N}} }\ \Big\lvert \vX\right)}=0. 

\end{array}$\\\\\\
For the last part, we used the Cauchy-Bunjakowski-Schwarz-Inequality and that we know $\E\left( \big  \lvert \big  \lvert \vY_{i 1}^\dagger\big \lvert \big \lvert^4\ \big\lvert \vX\right)<\infty$. Finally through  $ {n_i}/ N \to \kappa_i$ and therefore $\delta\cdot  {n_i}/{\sqrt{N}}\to \infty $, it holds $P\left( \big \lvert \big \lvert \vY_{i 1}^\dagger \big \lvert \big \lvert>\delta \cdot {n_i}/\sqrt{N} \right)\to 0$, which leads to the result.\\

Given the data $\vX$, it follows that ${\sqrt{N}}\ \overline \vY_{i }^\dagger$  converges in distribution to $\vLambda_i\sim\mathcal{N}_{p_u}\left(\vnull_{p_u}, 1/ {\kappa_i}\cdot  \vUpsilon_i\right)$ and therefore  because of independence of groups $ {\sqrt{N}}\ \overline \vY^\dagger$ converges in distribution to $\vLambda\sim\mathcal{N}_{a p_u}\left(\vnull_{a p_u},\vUpsilon\right)$. \\

We can use the results from \cite{sattler2022} for the consistency of the empirical covariance matrix. Through the construction of the bootstrap sample it holds $\vY_{ik}^\dagger
\stackrel{\mathcal{D}}{=} \vM(\widehat \vv_i,\widehat \vr_i) \vZ_{ik}^\dagger$, while $\vZ_{ik}^\dagger\sim \mathcal N_{p}(\vnull_p,\widehat \vSigma_i)$. Therefore with the consistency of $\widehat \vSigma_i^\dagger(\vZ)$, the empirical covariance matrix for $\vZ_{i1}^\dagger,...,\vZ_{in_i}^\dagger$ it holds

\[\widehat \vSigma_i^\dagger(\vY)\stackrel{\mathcal{D}}{=}\vM(\widehat \vv_i,\widehat \vr_i) \widehat \vSigma_i^\dagger(\vZ) \vM(\widehat \vv_i,\widehat \vr_i)^\top\stackrel{\mathcal{P}}{\to}\vM( \vv_i, \vr_i)\vSigma_i\vM( \vv_i, \vr_i)^\top.\]

Again, with the independence of groups, the results from part (b) follow directly.
\end{proof}  

\begin{Le}\label{TayLe}
Let be
\[\vM_6=\sum\nolimits_{\ell=1}^{d} \ve_{\ell,d}\cdot \ve_{a_\ell,p}^\top
\]
 and
 
\[\vLambda_i(\widehat\vv_i):=\diag(\vech((\widehat v_{i11},...,\widehat v_{i dd})^\top (\widehat v_{i11},...,\widehat v_{i dd})))^{-\frac 1 2}.\]
Then for $\vY_i\sim\mathcal{N}_{p}\left(\vnull_{p},\vSigma_i\right)$ it holds \[\sqrt{n_i}(\widehat \vr_i-\vr_i)=\vM(\widehat\vv_i,\widehat\vr_i) \vY_i+ f_{\widehat\vv_i,\widehat \vR_i}(\vY_i)/\sqrt{n_i}+\lan_P\left(\sqrt{{n_i^{-1}}}\right) \]
with
\[\begin{array}{lll}f_{\widehat\vv_i,\widehat \vR_i}(\vx)&=&\frac{1}{4}\vL_p^u\diag(\vech(\vM_6 \vLambda_i(\widehat\vv_i)\vx \vx^\top\vLambda_i(\widehat\vv_i)^\top \vM_6^\top))  \vech(\widehat\vR_i)\\
&&- \frac{1}{2} \vL_p^u\diag(\vLambda_i(\widehat\vv_i) \vx) \vM_4  \vM_5 \vLambda_i(\widehat\vv_i) \vx    \\
          &&+ \frac{3}{8} \diag(\widehat \vr_i) \vM_1   \vech((\vM_6 \vLambda_i(\widehat\vv_i)\vx)(\vM_6\vLambda_i(\widehat\vv_i)\vx)^\top).\end{array}\]
\end{Le}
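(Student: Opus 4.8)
The plan is to refine by one further order the Taylor expansion already carried out in the proof of \Cref{CorTheorem1}, retaining the $n_i^{-1}$-terms that Slutsky's theorem discarded there, and then to push everything through the $\vech^-$-machinery. Concretely, that computation gives, with $\vU_i=\vV_{i,0}^{-1/2}\vDelta_i\vV_{i,0}^{-1/2}$, $\vU_{i,0}$ its diagonal part and $\vDelta_i:=\sqrt{n_i}(\widehat\vV_i-\vV_i)$,
\[\widehat\vR_i=\vR_i+\frac{1}{\sqrt{n_i}}\Bigl[\vU_i-\frac12(\vU_{i,0}\vR_i+\vR_i\vU_{i,0})\Bigr]+\frac{1}{n_i}\boldsymbol{G}_i+\lan_P\bigl(n_i^{-q-1/2}\bigr),\]
where $\boldsymbol{G}_i=\frac38(\vU_{i,0}^2\vR_i+\vR_i\vU_{i,0}^2)+\frac14\vU_{i,0}\vR_i\vU_{i,0}-\frac12(\vU_{i,0}\vU_i+\vU_i\vU_{i,0})$ collects the order-$n_i^{-1}$ contributions — coming from the $\frac38(x-1)^2$ coefficient of $x\mapsto x^{-1/2}$ and from the cross products of the two linear terms — which were absorbed into the remainder in \Cref{CorTheorem1}. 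Multiplying by $\sqrt{n_i}$ and reading off the linear block exactly as there yields $\sqrt{n_i}(\widehat\vR_i-\vR_i)=[\vU_i-\frac12(\vU_{i,0}\vR_i+\vR_i\vU_{i,0})]+n_i^{-1/2}\boldsymbol{G}_i+\lan_P(n_i^{-q})$, whose $\vech^-$-image has leading term $\vM(\vv_i,\vr_i)\vech(\vDelta_i)$.

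The second step is to vectorize $\boldsymbol{G}_i$. Here I would use that $\vU_{i,0}$ is diagonal with diagonal vector $\vu_i:=\vM_6\vech(\vU_i)$, so that the $(j,k)$ entries of $\vU_{i,0}\vR_i\vU_{i,0}$, of $\vU_{i,0}^2\vR_i+\vR_i\vU_{i,0}^2$ and of $\vU_{i,0}\vU_i+\vU_i\vU_{i,0}$ read $(\vu_i)_j(\vR_i)_{jk}(\vu_i)_k$, $(\vR_i)_{jk}[(\vu_i)_j^2+(\vu_i)_k^2]$ and $(\vU_i)_{jk}[(\vu_i)_j+(\vu_i)_k]$, respectively. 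Translating these patterns into matrix notation gives $\vech(\vU_{i,0}\vR_i\vU_{i,0})=\diag(\vech(\vu_i\vu_i^\top))\vech(\vR_i)$, $\vech(\vU_{i,0}^2\vR_i+\vR_i\vU_{i,0}^2)=\diag(\vech(\vR_i))\vM_4\vech(\vu_i\vu_i^\top)$ and $\vech(\vU_{i,0}\vU_i+\vU_i\vU_{i,0})=\diag(\vech(\vU_i))\vM_4\vM_5\vech(\vU_i)$, using $\vM_4=\vM_2+\vM_3$ and $\vM_5\vech(\vU_i)=\vech(\vU_{i,0})$ from the proof of \Cref{CorTheorem1}. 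Applying $\vL_p^u$ together with the identities $\vL_p^u\diag(\vech(\vR_i))=\diag(\vr_i)\vL_p^u$ and $\vL_p^u\vM_4=\vM_1$, and rewriting $\vech(\vU_i)=\vLambda_i(\vv_i)\vech(\vDelta_i)$ as well as $\vu_i=\vM_6\vLambda_i(\vv_i)\vech(\vDelta_i)$, collapses $n_i^{-1/2}\vL_p^u\vech(\boldsymbol{G}_i)$ into precisely the three displayed summands of $f$ (with the true parameters still in place, and the argument playing the role of $\vech(\vDelta_i)$).

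The third step is to pass to estimators and to $\vY_i$. First I identify the generic argument $\vY_i$ with $\vech(\vDelta_i)=\sqrt{n_i}(\widehat\vv_i-\vv_i)$, which by Theorem 1 of \cite{sattler2022} converges in distribution to $\mathcal{N}_p(\vnull_p,\vSigma_i)$. Then I replace the true $\vv_i,\vR_i$ by $\widehat\vv_i,\widehat\vR_i$: in the $n_i^{-1/2}$-block every term already carries a factor $n_i^{-1/2}$, so by consistency of the estimators and Slutsky each such replacement costs only $\lan_P(n_i^{-1/2})$, whence that block becomes exactly $n_i^{-1/2}f_{\widehat\vv_i,\widehat\vR_i}(\vY_i)$; in the leading linear block the replacement is controlled just as in \Cref{CorTheorem1}. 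Finally, the remainders hidden in $\lan_P(n_i^{-q-1/2})$ — the cubic Taylor term of $x^{-1/2}$, all triple products of $\vU_i,\vU_{i,0}$, and the cross-remainders — are handled exactly as there, via Slutsky combined with $\E(||\vep_{i1}||^4)<\infty$, which makes $\vU_i,\vU_{i,0}$ stochastically bounded and these pieces of order at most $n_i^{-1}$ after the $\sqrt{n_i}$-scaling, hence $\lan_P(n_i^{-1/2})$.

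The main obstacle is the purely algebraic bookkeeping of the second step: expressing the five matrix double/triple products in $\boldsymbol{G}_i$ through $\diag(\cdot)$, $\vech(\cdot\,\cdot^\top)$ and the selection matrices $\vM_1,\dots,\vM_6$, and verifying the structural identities $\vu_i=\vM_6\vech(\vU_i)$, $\vM_5\vech(\vU_i)=\vech(\vU_{i,0})$, $\vL_p^u\vM_4=\vM_1$ and $\vL_p^u\diag(\vech(\vR_i))=\diag(\vr_i)\vL_p^u$, so that the three summands of $f$ — and in particular their coefficients $\frac14,-\frac12,\frac38$ and their signs — fall out cleanly. Keeping straight which true-versus-estimated substitutions are genuinely harmless at the $\lan_P(n_i^{-1/2})$ level and which only inherit the coarser $\lan_P(1)$ bound already available from \Cref{CorTheorem1} is the only other point requiring care.
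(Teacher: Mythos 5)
Your proposal is correct and follows essentially the same route as the paper: it restarts from the second-order Taylor expansion of $x\mapsto x^{-1/2}$ already written out in the proof of \Cref{CorTheorem1}, collects the five order-$n_i^{-1}$ matrix products with coefficients $\tfrac38,\tfrac14,-\tfrac12$, vectorizes them via the same identities involving $\vM_2,\vM_3,\vM_4,\vM_5,\vM_6$ and $\vu_i=\vM_6\vech(\vU_i)$, applies $\vL_p^u$ with $\vL_p^u\vM_4=\vM_1$ and $\vL_p^u\diag(\vech(\vR_i))=\diag(\vr_i)\vL_p^u$, and finally substitutes $\vech(\vU_i)=\vLambda_i(\widehat\vv_i)\vY_i+\lan_P(1)$ and the consistent estimators, exactly as in the paper's argument. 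No gaps.
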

\begin{proof}
From the proof of \Cref{PBTheorem1} we know\\\\
$\begin{array}{lll}\widehat \vR_i

&=& \vR_i+\frac{1}{\sqrt{n_i}}\vU_i
-\frac 1 {2\sqrt{n_i}} \vU_{i,0}\vR_i-\frac 1 {2\sqrt{n_i}}\vR_i \vU_{i,0}
+\lan_P\left(n_i^{-q-0.5}\right)
\\&&
+\frac 1 {4{n_i}} \vU_{i,0}\vR_i \vU_{i,0}
-\frac 1 {2{n_i}} \vU_{i,0}\vU_i
+\frac 3 8\left(\frac 1 {\sqrt{n_i}} \vU_{i,0}\right)^2\vR_i+\frac 3 8\vR_i\left(\frac 1 {\sqrt{n_i}} \vU_{i,0}\right)^2 
-\frac{1}{2{n_i}}\vU_i\vU_{i,0}

.\end{array}$\\

Again the aim is to express the vectorization of these matrices by using $\vech(\vR_i)$ and $\vech(\vU_i)$. 
Analogous to the proof of \Cref{PBTheorem1} it follows directly 
\[\begin{array}{l} \vech(\vU_{i,0}\vU_i)=\diag(\vech(\vU_i))\cdot \vM_2\cdot \vech(\vU_{i,0}),\\
\vech(\vU_i\vU_{i,0})=\diag(\vech(\vU_i))\cdot \vM_3\cdot \vech(\vU_{i,0}),\\
\vech((\vU_{i,0})^2\vR_i)=\diag(\vech(\vR_i))\cdot \vM_2\cdot \vech((\vU_{i,0})^2),\\
\vech(\vR_i(\vU_{i,0})^2)=\diag(\vech(\vR_i))\cdot \vM_3\cdot \vech((\vU_{i,0})^2).

\end{array}
\]
Moreover, with  $\vM_6$ it is easy to check \[\vech(\vU_{i,0}\vR_i \vU_{i,0})=\diag(\vech(\vM_6 \vech(\vU_{i})\cdot\vech(\vU_{i})^\top \vM_6^\top))  \cdot \vech(\vR_i) \]
as well as 
\[\vech((\vU_{i,0})^2)= \vech((\vM_6 \vech(\vU_{i}))(\vM_6 \vech(\vU_{i}))^\top).\]

So together this leads to\\
$\begin{array}{lll}&&\sqrt{n_i}(\widehat \vr_i-\vr_i)\\                               
  &=&\vM(\vv_i,\vr_i)\vech(\vDelta_i) +\lan_P\left(n_i^{-q}\right)  \\
         &&+\frac{1}{4\sqrt{n_i}}\vL_p^u\diag(\vech(\vM_6 \vech(\vU_{i})\vech(\vU_{i})^\top \vM_6^\top))   \vech(\vR_i)\\
&&- \frac{1}{2\sqrt{n_i}} \vL_p^u\diag(\vech(\vU_i)) (\vM_2+\vM_3)  \vM_5 \vech(\vU_{i})     \\
          &&+ \frac{3}{8\sqrt{n_i}} \vL_p^u\diag(\vech(\vR_i)) (\vM_2+\vM_3)   \vech((\vM_6 \vech(\vU_{i}))(\vM_6 \vech(\vU_{i}))^\top)\\
            &=&\vM(\vv_i,\vr_i)\vech(\vDelta_i) +\lan_P\left(n_i^{-q}\right)  \\
         &&+\frac{1}{4\sqrt{n_i}}\vL_p^u\diag(\vech(\vM_6 \vech(\vU_{i})\vech(\vU_{i})^\top \vM_6^\top))   \vech(\widehat\vR_i)\\
&&- \frac{1}{2\sqrt{n_i}} \vL_p^u\diag(\vech(\vU_i)) \vM_4  \vM_5 \vech(\vU_{i})     \\
          &&+ \frac{3}{8\sqrt{n_i}} \diag(\widehat \vr_i) \vM_1   \vech((\vM_6 \vech(\vU_{i}))(\vM_6 \vech(\vU_{i}))^\top),

\end{array}$\\
where we used the consistency of $\widehat \vR$. With $\vLambda_i(\widehat\vv_i)$ and the results from \cite{sattler2022}
it holds
\[ \vech(\vU_i)= \vLambda_i(\widehat\vv_i)\vY_i+\lan_P\left(1\right)\] while $\vY_i\sim\mathcal{N}_{p}\left(\vnull_{p},\vSigma_i\right)$.
From this reason we define,

\[\begin{array}{lll}f_{\widehat\vv_i,\widehat \vR_i}(\vx)&=&\frac{1}{4}\vL_p^u\diag(\vech(\vM_6 \vLambda_i(\widehat\vv_i)\vx\vx^\top\vLambda_i(\widehat\vv_i)^\top \vM_6^\top))   \vech(\widehat\vR_i)\\
&&- \frac{1}{2} \vL_p^u\diag(\vLambda_i(\widehat\vv_i) \vx) \vM_4  \vM_5 \vLambda_i(\widehat\vv_i) \vx    \\
          &&+ \frac{3}{8} \diag(\widehat \vr_i) \vM_1   \vech((\vM_6 \vLambda_i(\widehat\vv_i)\vx)(\vM_6\vLambda_i(\widehat\vv_i)\vx)^\top).\end{array}\]
\end{proof}
From a theoretical point of view, $f_{\widehat\vv_i,\widehat \vR_i}$ is no function but a family of functions since it strongly depends on $\widehat\vv_i$ and $\widehat \vR_i$.\\\\
Through the construction of $\vY^{Tay}$ it is clear that $\sqrt{N}\vY^{Tay}\stackrel{\mathcal{D}}{\To} \vZ^{Tay}\sim\mathcal{N}_{a p_u}(\vnull_{ap_u},\vUpsilon)$ and from \Cref{Verteilung} we know $\sqrt{N}(\widehat \vr- \vr)\stackrel{\mathcal{D}}{\To} \vZ\sim\mathcal{N}_{a p_u}(\vnull_{ap_u},\vUpsilon)$.
Therefore, with $\vE(\vC,\widehat\vUpsilon)\stackrel{\mathcal{D}}{\To}\vE(\vC,\vUpsilon)$ it follows that our test based on this approach is asymptotic correct.

\section{Combined tests for covariance and correlations}

To investigate the asymptotic distribution of $\vT$  we use the matrix $\vA$ fullfilling

\[\vA\widehat \vv_i=\begin{pmatrix}
\widehat V_{i11}\\
\vdots\\
\widehat V_{idd}
\end{pmatrix}.\]
Together with
\[\sqrt{n_i}(\widehat \vr_i-\vr_i)=\vM(\vv_i,\vr_i)\sqrt{n_i} (\widehat \vv_i-\vv_i)+\lan_P(1)\]
this lead to
\[\sqrt{N}\left(
\begin{pmatrix}
\widehat V_{i 11}\\
\widehat V_{i 22}\\
\vdots\\
\widehat V_{i dd}\\
\widehat \vr_i
\end{pmatrix}-
\begin{pmatrix}
V_{i 11}\\
V_{i 22}\\
\vdots\\
V_{i dd}\\
\vr_i
\end{pmatrix}\right)\stackrel{\mathcal{D}}{\To} \mathcal{N}_p\left(\vnull_p,\kappa_i^{-1} \begin{pmatrix}
\vA\\
\vM(\vv_i,\vr_i)
\end{pmatrix}\vSigma_i\begin{pmatrix}
\vA\\
\vM(\vv_i,\vr_i)
\end{pmatrix}^\top\right)\]
and therefore, under the hypothesis of equal covariance matrices
\[\sqrt{N}\left(
\begin{pmatrix}
\widehat V_{1 11}\\
\widehat V_{1 22}\\
\vdots\\
\widehat V_{1 dd}\\
\widehat \vr_1
\end{pmatrix}-
\begin{pmatrix}
\widehat V_{2 11}\\
\widehat V_{2 22}\\
\vdots\\
\widehat V_{2 dd}\\
\widehat \vr_2
\end{pmatrix}\right)\stackrel{\mathcal{D}}{\To} \mathcal{N}_p\left(\vnull_p,\underbrace{\sum\limits_{i=1}^2\kappa_i^{-1} \begin{pmatrix}
\vA\\
\vM(\vv_i,\vr_i)
\end{pmatrix}\vSigma_i\begin{pmatrix}
\vA\\
\vM(\vv_i,\vr_i)
\end{pmatrix}^\top}_{\vGamma}\right).\]\\

But with \Cref{TayLe}, it also holds

\[\sqrt{n_i}\left(
\begin{pmatrix}
\widehat V_{i 11}\\
\widehat V_{i 22}\\
\vdots\\
\widehat V_{i dd}\\
\widehat \vr_i
\end{pmatrix}-
\begin{pmatrix}
V_{i 11}\\
V_{i 22}\\
\vdots\\
V_{i dd}\\
\vr_i
\end{pmatrix}\right)= \begin{pmatrix}
\vA\\
\vM(\vv_i,\vr_i)
\end{pmatrix} \vY_i+ \frac 1 {\sqrt{n_i}}\begin{pmatrix}\vnull_d\\
f_{\widehat\vv_i,\widehat \vR_i}(\vY_i)
\end{pmatrix}+\lan_P\left(\sqrt{n_i^{-1}}\right)\]
with 
$\vY_i\sim\mathcal{N}_{p}(\vnull_p,\vSigma_i)$. Therefore, for the transformed vectors $\vY_i^{Tay}$ it holds  \[\vY_i^{Tay}\stackrel{\mathcal{D}}{\To} \mathcal{N}_p\left(\vnull_p, \begin{pmatrix}
\vA\\
\vM(\vv_i,\vr_i)
\end{pmatrix}\vSigma_i\begin{pmatrix}
\vA\\
\vM(\vv_i,\vr_i)
\end{pmatrix}^\top\right)\]
and because of independence and Slutzky's theorem
$\vT^{Tay}_1=\sqrt{N}(n_1^{-1/2}\vY_1^{Tay}-n_{2}^{-1/2}\vY_2^{Tay})\stackrel{\mathcal{D}}\To \mathcal{N}_p(\vnull_p, \vGamma)$. So the asymptotic distribution of the $\vT^{Tay}_b$ coincides with the asymptotic distribution of $\vT$ under the null hypothesis, which leads to an asymptotic correct test.

\section*{Further Simulations} 
\subsection*{Type-I-error rate}
Here all hypotheses are investigated with more test statistics, like a wild bootstrap ATS. For hypothesis  $A_{\vr}$ in addition to a Toeplitz covariance matrix, also an autoregressive matrix $\vV_{ij}=0.6^{\lvert i-j\lvert}$ is simulated to see the influence of the chosen covariance matrix. 

Moreover, we investigated hypothesis  $A_{\vr}$ for the special case of dimension $d=2$ and therefore $p_u=1$. We also want to compare with the permutation-based approach from \cite{omPa:2012}, with 999 permutations. Since this procedure was specially developed for this dimension, while we allow higher dimensions, this is particularly interesting. We also adapted the sample sizes for this smaller dimension to have the same relation between $N$ and $d$ and get $\vN=(20,40,100,200)$. {To see the influence of the dimension $d$, we also consider the case $d=7$ and therefore $p_u=21$. Here the sample sizes are  $\vN=(35,70,175,350)$ for one group and $\vN=(70,140,350,700)$ for two groups, which is the same relation as earlier, but substantially smaller regarding the number of unknown parameters $p_u$.}\\ 
Since the diagonality of a covariance matrix can be checked by using the covariance matrix or the correlation matrix, for $B_{\vr}$ we used the covariance-based ATS approaches from \cite{sattler2022} {for  $d=5$}.

We also added one more hypothesis,  $C_{\vr}$ $\mathcal{H}_0^{\vr}:\vr_{1}=...=\vr_{p_u}$ to examine whether all correlation components in a group are the same. This hypothesis corresponds with a compound symmetry structure of the correlation matrix. Finally, we consider the gamma distribution as further distribution for all settings. 

    \begin{table}[htbp]
\centering
\begin{footnotesize}
\begin{tabular}{x{1.3pt}lx{1.3pt}c|c|c|cx{1.3pt}c|c|c|cx{1.3pt}}\specialrule{1.3pt}{0pt}{0pt}
   \multicolumn{1}{x{1.3pt}lx{1.3pt}}{} &\multicolumn{4}{ cx{1.3pt}}{$t_9$}&\multicolumn{4}{cx{1.3pt}}{Normal}\\
\specialrule{1.3pt}{0pt}{0pt}
    \hspace*{.1cm}N&   50&100&250&500&   50&100&250&500
       \\ \specialrule{1.3pt}{0pt}{0pt}
 ATS-Par &  .0755 & .0657 & .0568 & \bf{.0538} & .0703 & .0627 &\bf{ .0524} & \bf{.0519} \\   \hline
 ATS-Wild& .0832 & .0656 & .0593 & .0550 & .0763 & .0661 & \bf{.0536} & \bf{.0526} \\    \hline
 ATS-Par-m & .0653 & .0600 & .0553 & \bf{.0528} & .0609 & .0579 &\bf{ .0502} & \bf{.0511} \\  \hline
 ATS &  .0818 & .0652 & .0581 & \bf{.0542} & .0776 & .0640 & \bf{.0527} & \bf{.0511} \\   \hline
ATS-Tay &.0603 & \bf{.0528} & \bf{.0523} & \bf{.0511} & .0567 & \bf{.0536} & \bf{.0482} &\bf{ .0495} \\ \hline

    ATSFz & .0781 & .0634 & .0568 & \bf{.0540} & .0726 & .0628 &\bf{ .0516} & \bf{.0508} \\ \hline
 ATSFz-m& .0683 & .0592 & .0556 & \bf{.0530} & .0623 & .0582 & \bf{.0503} & \bf{.0501} \\   \hline
   Steiger & $<$.001&$<$.001 &$<$.001&$<$.001 &$<$.001 &$<$.001 &$<$.001&$<$.001 \\ \hline
   SteigerFz & .0931 & .1106 & .1303 & .1296 & .0908 & .1130 & .1249 & .1229 \\   \hline
 Jennrich & .3683 & .4214 & .4508 & .4621 & .3657 & .4045 & .4249 & .4336  \\  
 \specialrule{1.3pt}{0pt}{0pt}\multicolumn{9}{x{1.3pt}cx{1.3pt}}{}\\\specialrule{1.3pt}{0pt}{0pt}
   &\multicolumn{4}{cx{1.3pt}}{Skew Normal} &\multicolumn{4}{cx{1.3pt}}{Gamma}\\\specialrule{1.3pt}{0pt}{0pt}

    \hspace*{.1cm}N&   50&100&250&500&   50&100&250&500
       \\ \specialrule{1.3pt}{0pt}{0pt}
 ATS-Par & .0734 & .0635 & \bf{.0530} & .0567 & .0861 & .0707 & .0605 & .0575 \\  \hline
 ATS-Wild & .0803 & .0671 & .0551 & .0578 & .0912 & .0732 & .0628 & .0566 \\  \hline
 ATS-Par-m & .0626 & .0602 & \bf{.0518} & .0557 & .0753 & .0670 & .0586 & .0565 \\  \hline
 ATS & .0798 & .0666 & \bf{.0536} & .0567 & .0915 & .0723 & .0613 & .0565 \\ \hline
ATS-Tay &  .0579 & \bf{.0536} & \bf{.0486} & \bf{.0539} & .0653 & .0591 & .0547 & \bf{.0528} \\ \hline

    ATSFz & .0757 & .0659 & \bf{.0528} & .0562 & .0883 & .0713 & .0607 & .0559 \\ \hline
 ATSFz-m & .0642 & .0604 &\bf{ .0512} & .0554 & .0766 & .0666 & .0588 & .0546 \\  \hline
   Steiger & $<$.001&$<$.001 &$<$.001&$<$.001 &$<$.001 &$<$.001 &$<$.001&$<$.001 \\ \hline
   SteigerFz & .0902 & .1158 & .1177 & .1272 & .0952 & .1212 & .1311 & .1372 \\  \hline
 Jennrich & .3738 & .4133 & .4484 & .4522 & .4006 & .4576 & .4995 & .5006 \\  
\specialrule{1.3pt}{0pt}{0pt}
\end{tabular}
\end{footnotesize}

\caption{Simulated type-I-error rates ($\alpha=5\%$) in scenario $A_{\vr}$ ($\mathcal{H}_0^{\vr}:\vR_{1}=\vR_{2}$)  for ATS, Steiger's and Jennrich's test. The observation vectors have dimension 5, covariance matrix $(\vV_1)_{ij}=1-{\lvert i-j\lvert/2d}$ resp. $\vV_2=\diag(1,1.2,...,1.8)\vV_1$ and it always holds $n_1:=0.6\cdot N$ resp. $n_2:=0.4\cdot N$.}\label{tab:SimA1CorS}\end{table}

The wild bootstrap approach seems to be too liberal in all settings, particularly in comparison with the parametric bootstrap or the Monte-Carlo approach. So this bootstrap technique is not recommendable for testing hypotheses regarding correlation matrices.

In \Cref{tab:SimA1CorS} and \Cref{tab:SimA2CorS}, it seems that the test performance partially depends on the underlying covariance structure. The results of our tests for the autoregressive covariance matrix are more liberal than for the Toeplitz covariance matrix.
{However, the influence is remarkable for the test from \cite{jennrich1970}, which has for a Toeplitz covariance matrix type-I error rates all the time higher than $0.36$.} It is {better} for the autoregressive one but worse than most of our tests.
Also, the test of Steiger without the Fisher transformation performs clearly better for the autoregressive covariance matrix, although large sample sizes are required to fulfil Bradley's liberal criterion.

{ Our test results} for dimension 2 from \Cref{tab:SimA3CorS} and \Cref{tab:SimA4CorS} are slightly more liberal than for dimension 5. For the Toeplitz matrix, Steiger has disastrous error rates and is not usable in our opinion, while SteigerFz seems to perform clearly better for small dimensions.
It is noticeable that while for dimension 5, ATSFz was favourable to the parametric bootstrap most of the time, it is now vice versa. Finally, for dimension 2 the type-I error rate of the permutation test from \cite{omPa:2012} as well as Jennrich and SteigerFz seem to depend on the covariance matrix and the distribution. Although ATS-Par, ATS-Par-m, ATS-Tay and ATSFz-m here often need 100 or more observations to fulfil Bradley's liberal criterion, they seem less dependent on the setting. Our approach works better for dimensions larger than 2, which was the main focus. Moreover, these tables show that the required large number of observations for correlation tests (see, for example, \cite{steiger1980}) are not in relation to the number of unknown parameters $p_u$.

\begin{table}[htbp]
\centering
\begin{footnotesize}
\begin{tabular}{x{1.3pt}lx{1.3pt}c|c|c|cx{1.3pt}c|c|c|cx{1.3pt}}\specialrule{1.3pt}{0pt}{0pt}
   \multicolumn{1}{x{1.3pt}lx{1.3pt}}{} &\multicolumn{4}{ cx{1.3pt}}{$t_9$}&\multicolumn{4}{cx{1.3pt}}{Normal}\\
\specialrule{1.3pt}{0pt}{0pt}
    \hspace*{.1cm}N&   50&100&250&500&   50&100&250&500
       \\ \specialrule{1.3pt}{0pt}{0pt}
 ATS-Par & .1142 & .0778 & .0614 & .0558 & .1030 & .0770 & .0579 & .0551 \\  \hline
 ATS-Wild & .1365 & .0889 & .0656 & .0586 & .1283 & .0844 & .0616 & .0569 \\   \hline
 ATS-Par-m & .0977 & .0721 & .0592 & .0545 & .0879 & .0696 & .0560 & \bf{.0539} \\    \hline
 ATS & .1215 & .0788 & .0615 & .0552 & .1101 & .0777 & .0585 & .0550 \\  \hline
ATS-Tay & .0829 & .0636 & .0554 &\bf{ .0511} & .0723 & .0606 & \bf{.0512} & \bf{.0530} \\   \hline

    ATSFz & .1137 & .0757 & .0604 & .0546 & .1029 & .0744 & .0576 & .0547 \\  \hline
 ATSFz-m & .0960 & .0688 & .0582 & \bf{.0532} & .0870 & .0678 & .0544 & \bf{.0540} \\   \hline
   Steiger & .0103 & .0195 & .0232 & .0254 & .0123 & .0204 & .0244 & .0259 \\  \hline
   SteigerFz &.0564 & .0790 & .0922 & .0946 & .0601 & .0764 & .0899 & .0921 \\   \hline
  Jennrich & .3355 & .2869 & .2571 & .2531 & .3344 & .2817 & .2559 & .2456 \\    
 \specialrule{1.3pt}{0pt}{0pt}\multicolumn{9}{x{1.3pt}cx{1.3pt}}{}\\\specialrule{1.3pt}{0pt}{0pt}
   &\multicolumn{4}{cx{1.3pt}}{Skew Normal} &\multicolumn{4}{cx{1.3pt}}{Gamma}\\\specialrule{1.3pt}{0pt}{0pt}

    \hspace*{.1cm}N&   50&100&250&500&   50&100&250&500
       \\ \specialrule{1.3pt}{0pt}{0pt}
 ATS-Par & .1079 & .0812 & .0575 & .0558 & .1206 & .0895 & .0667 & .0606 \\\hline
 ATS-Wild & .1320 & .0896 & .0619 & .0581 & .1451 & .1031 & .0727 & .0619 \\    \hline
 ATS-Par-m &.0919 & .0739 & .0545 & .0546 & .1026 & .0831 & .0640 & .0596 \\ \hline
 ATS   & .1150 & .0812 & .0571 & .0551 & .1275 & .0921 & .0668 & .0598 \\   \hline
ATS-Tay & .0819 & .0658 & \bf{.0519} &\bf{ .0534} & .0833 & .0702 & .0584 & .0561 \\ \hline

    ATSFz &.1089 & .0791 & .0565 & \bf{.0540} & .1203 & .0896 & .0663 & .0590 \\  \hline
 ATSFz-m & .0912 & .0713 & .0548 & \bf{.0533} & .1022 & .0821 & .0641 & .0582 \\    \hline
   Steiger & .0099 & .0193 & .0233 & .0260 & .0079 & .0178 & .0259 & .0275 \\ \hline
   SteigerFz &.0563 & .0784 & .0842 & .0931 & .0596 & .0828 & .0959 & .0993 \\   \hline
  Jennrich& .3280 & .2815 & .2556 & .2485 & .3400 & .3007 & .2712 & .2607 \\  
\specialrule{1.3pt}{0pt}{0pt}
\end{tabular}

\end{footnotesize}

\caption{Simulated type-I-error rates ($\alpha=5\%$) in scenario $A_{\vr}$ ($\mathcal{H}_0^{\vr}:\vR_{1}=\vR_{2}$)  for ATS, Steiger's and Jennrich's test. The observation vectors have dimension 5, covariance matrices $(\vV_1)_{ij}=0.6^{\lvert i-j\lvert}$   resp. $\vV_2=\diag(1,1.2,...,1.8)\vV_1$ and it always holds $n_1:=0.6\cdot N$ resp. $n_2:=0.4\cdot N$.}\label{tab:SimA2CorS}
\end{table}

    \begin{table}[htbp]
\centering
\begin{footnotesize}
\begin{tabular}{x{1.3pt}lx{1.3pt}c|c|c|cx{1.3pt}c|c|c|cx{1.3pt}}\specialrule{1.3pt}{0pt}{0pt}
   \multicolumn{1}{x{1.3pt}lx{1.3pt}}{} &\multicolumn{4}{ cx{1.3pt}}{$t_9$}&\multicolumn{4}{cx{1.3pt}}{Normal}\\
\specialrule{1.3pt}{0pt}{0pt}
    \hspace*{.1cm}N&   20&40&100&200&   20&40&100&200
       \\ \specialrule{1.3pt}{0pt}{0pt}
 ATS-Par & .0954 & .0676 & .0611 & .0567 & .0847 & .0625 & .0561 & \bf{.0535} \\  \hline
 ATS-Wild & .1026 & .0723 & .0645 & .0590 & .0899 & .0662 & .0572 & \bf{.0541} \\  \hline
 ATS-Par-m & .0699 & .0579 & .0583 & .0553 & .0623 & \bf{.0524} & \bf{.0528} & \bf{.0517} \\  \hline
 ATS & .1327 & .0807 & .0666 & .0584 & .1203 & .0754 & .0588 &\bf{ .0539} \\  \hline
 ATS-Tay& .0964 & .0651 & \bf{.0489} & \bf{.0536} & .0904 & .0608 & .0545 & \bf{.0473} \\  \hline
  ATSFz & .1217 & .0763 & .0652 & .0581 & .1095 & .0720 & .0573 & .0544 \\ \hline
 ATSFz-m & .0924 & .0647 & .0604 & .0558 & .0828 & .0606 & \bf{.0535} &\bf{ .0518} \\  \hline
 Steiger &$<$.001 & $<$.001&$<$.001 & $<$.001 & $<$.001 &$<$.001 & $<$.001 &$<$.001 \\ \hline
  SteigerFz& .0101 & .0318 & \bf{.0469} & \bf{.0471} & .0082 & .0280 & .0398 & .0457 \\  \hline
 Jennrich  & .0303 & .0308 & .0351 & .0350 & .0294 & .0277 & .0292 & .0312 \\ 
  \hline
 Om-Pa & .0786 & .0644 & .0654 & .0628 & .0967 & .0851 & .0825 & .1033 \\ 

 \specialrule{1.3pt}{0pt}{0pt}\multicolumn{9}{x{1.3pt}cx{1.3pt}}{}\\\specialrule{1.3pt}{0pt}{0pt}
   &\multicolumn{4}{cx{1.3pt}}{Skew Normal} &\multicolumn{4}{cx{1.3pt}}{Gamma}\\\specialrule{1.3pt}{0pt}{0pt}

    \hspace*{.1cm}N&   20&40&100&200&   20&40&100&200
       \\ \specialrule{1.3pt}{0pt}{0pt}
 ATS-Par &.0931 & .0747 & .0659 & .0597 & .0936 & .0901 & .0791 & .0679 \\  \hline
 ATS-Wild & .0987 & .0774 & .0675 & .0601 & .0995 & .0935 & .0809 & .0699 \\ \hline
 ATS-Par-m &.0678 & .0634 & .0621 & .0576 & .0704 & .0792 & .0748 & .0656 \\ \hline
 ATS & .1307 & .0866 & .0691 & .0603 & .1339 & .1035 & .0806 & .0708 \\  \hline
 ATS-Tay&  .0974 & .0704 & .0552 & .0544 & .0905 & .0795 & .0654 & .0595 \\  \hline
   ATSFz &  .1222 & .0832 & .0678 & .0592 & .1254 & .1003 & .0791 & .0707 \\  \hline
 ATSFz-m &.0911 & .0716 & .0640 & .0575 & .0927 & .0865 & .0758 & .0685 \\  \hline
Steiger  & $<$ .001 & $<$ .001 & $<$ .001 &$<$ .001 &$<$ .001 & $<$ .001 & $<$ .001& $<$ .001\\ \hline
   SteigerFz &.0093 & .0311 & \bf{.0461} &\bf{ .0479} & .0100 & .0322 & \bf{.0461} & \bf{.0484} \\  \hline
Jennrich & .0263 & .0287 & .0333 & .0332 & .0237 & .0288 & .0332 & .0324 \\  \hline
 Om-Pa  & .0916 & .0837 & .0777 & .0785 & .0855 & .0676 &\bf{ .0542} & .0437 \\  
\specialrule{1.3pt}{0pt}{0pt}
\end{tabular}
\end{footnotesize}
\caption{Simulated type-I-error rates ($\alpha=5\%$) in scenario $A_{\vr}$ ($\mathcal{H}_0^{\vr}:\vR_{1}=\vR_{2}$) for ATS,  Steiger's, Jennrich's test and the test from \cite{omPa:2012}.  The observation vectors have dimension 2, covariance matrix $(\vV_1)_{ij}=1-{\lvert i-j\lvert/2d}$ resp. $\vV_2=\diag(1,1.5)\vV_1$ and it always holds $n_1:=0.6\cdot N$ resp. $n_2:=0.4\cdot N$.}\label{tab:SimA3CorS}\end{table}

\begin{table}[htbp]
\centering
\begin{footnotesize}
\begin{tabular}{x{1.3pt}lx{1.3pt}c|c|c|cx{1.3pt}c|c|c|cx{1.3pt}}\specialrule{1.3pt}{0pt}{0pt}
   \multicolumn{1}{x{1.3pt}lx{1.3pt}}{} &\multicolumn{4}{ cx{1.3pt}}{$t_9$}&\multicolumn{4}{cx{1.3pt}}{Normal}\\
 
\specialrule{1.3pt}{0pt}{0pt}
    \hspace*{.1cm}N&   20&40&100&200&   20&40&100&200
       \\ \specialrule{1.3pt}{0pt}{0pt}
  ATS-Par &  .1263 & .0839 & .0700 & .0607 & .1163 & .0787 & .0622 & .0568 \\  \hline
  ATS-Wild & .1371 & .0902 & .0722 & .0623 & .1271 & .0824 & .0626 & .0575 \\  \hline
    ATS-Par-m & .1033 & .0735 & .0666 & .0577 & .0941 & .0683 & .0581 & .0548 \\  \hline
  ATS & .1652 & .0984 & .0747 & .0622 & .1538 & .0919 & .0645 & .0576 \\ \hline
  ATS-Tay & .1308 & .0824 & .0579 & .0579 & .1271 & .0780 & .0617 &\bf{.0521} \\  \hline
    ATSFz & .1481 & .0913 & .0719 & .0613 & .1367 & .0841 & .0619 & .0568 \\  \hline
   ATSFz-m & .1170 & .0775 & .0669 & .0593 & .1052 & .0714 & .0587 & .0546 \\  \hline
   Steiger & $<$ .001 & .0026 & .0018 & .0027 & $<$ .001& .0032 & .0037 & .0027 \\ \hline
     SteigerFz &.0113 & .0320 &  \bf{.0472} &  \bf{.0479} & .0089 & .0288 & .0398 &  \bf{.0459} \\ \hline
   Jennrich & .0402 & .0353 & .0357 & .0348 & .0375 & .0324 & .0301 & .0307  \\ \hline
     Om-Pa &.0655 & .0570 & .0548 & \bf{ .0537} & .0854 & .0766 & .0783 & .0959 \\ 
  \specialrule{1.3pt}{0pt}{0pt}\multicolumn{9}{x{1.3pt}cx{1.3pt}}{}\\\specialrule{1.3pt}{0pt}{0pt}
   &\multicolumn{4}{cx{1.3pt}}{Skew Normal} &\multicolumn{4}{cx{1.3pt}}{Gamma}\\\specialrule{1.3pt}{0pt}{0pt}

    \hspace*{.1cm}N&   20&40&100&200&   20&40&100&200
       \\ \specialrule{1.3pt}{0pt}{0pt}
  ATS-Par & .1259 & .0906 & .0729 & .0605 & .1268 & .1042 & .0833 & .0716\\ \hline
  ATS-Wild &  .1329 & .0948 & .0742 & .0623 & .1326 & .1088 & .0854 & .0725 \\   \hline
    ATS-Par-m & .1000 & .0790 & .0694 & .0593 & .0989 & .0921 & .0799 & .0700 \\  \hline
  ATS & .1617 & .1031 & .0750 & .0624 & .1673 & .1185 & .0856 & .0730 \\  \hline
   ATS-Tay & .1327 & .0922 & .0616 & .0579 & .1276 & .0963 & .0732 & .0625 \\ \hline
    ATSFz & .1451 & .0948 & .0732 & .0608 & .1522 & .1122 & .0835 & .0719 \\ \hline 
   ATSFz-m  & .1167 & .0839 & .0686 & .0588 & .1170 & .0991 & .0798 & .0698 \\  \hline
  Steiger & $<$ .001 & .0013 & .0016 & .0017 & $<$ .001 & $<$ .001 & .0018 & .0021 \\  \hline 
    SteigerFz  & .0092 & .0318 &  \bf{.0460} &  \bf{.0474} & .0102 & .0322 &  \bf{.0458} &  \bf{.0474} \\  \hline
    Jennrich &.0369 & .0312 & .0346 & .0330 & .0307 & .0326 & .0336 & .0317 \\ \hline
    Om-Pa &.0818 & .0744 & .0699 & .0716 & .0747 & .0572 & .0448 & .0347 \\ 
\specialrule{1.3pt}{0pt}{0pt}
\end{tabular}
\end{footnotesize}

\caption{Simulated type-I-error rates ($\alpha=5\%$) in scenario $A_{\vr}$ ($\mathcal{H}_0^{\vr}:\vR_{1}=\vR_{2}$)  for ATS,  Steiger's, Jennrich's test and the test from \cite{omPa:2012}.  The observation vectors have dimension 2, covariance matrices $(\vV_1)_{ij}=0.6^{\lvert i-j\lvert}$  resp. $\vV_2=\diag(1,1.5)\vV_1$ and it always holds $n_1:=0.6\cdot N$ resp. $n_2:=0.4\cdot N$.}\label{tab:SimA4CorS}
\end{table}

\begin{table}[htp]
\centering
\begin{footnotesize}
\begin{tabular}{x{1.3pt}lx{1.3pt}c|c|c|cx{1.3pt}c|c|c|cx{1.3pt}}\specialrule{1.3pt}{0pt}{0pt}
   \multicolumn{1}{x{1.3pt}lx{1.3pt}}{} &\multicolumn{4}{ cx{1.3pt}}{$t_9$}&\multicolumn{4}{cx{1.3pt}}{Normal}\\

\specialrule{1.3pt}{0pt}{0pt} 
    \hspace*{.1cm}N&   25&50&125&250&   25&50&125&250
       \\ \specialrule{1.3pt}{0pt}{0pt}
  ATS-Par& .0863 & .0620 & \bf{.0537} & \bf{.0501} & .0835 & .0605 & \bf{.0493} & .0559 \\   \hline
  ATS-Wild & .1783 & .1119 & .0732 & .0619 & .1577 & .0989 & .0650 & .0638 \\  \hline
  ATS-Par-m & \bf{.0507} & .0448 & \bf{.0473} & \bf{.0471} & .0455 & .0422 & .0429 & \bf{.0517} \\    \hline
ATS & .0974 & .0646 & \bf{.0533} & \bf{.0492} & .0940 & .0621 & \bf{.0506} & \bf{.0541} \\  \hline
  ATS-Tay & \bf{.0512} & .0394 & .0417 & .0431 & .0556 & .0436 & .0417 & \bf{.0500} \\  \hline
  ATSFz  & .0916 & .0607 & \bf{.0522} & \bf{.0490} & .0901 & .0605 & \bf{.0495} & \bf{.0541} \\  \hline
 ATSFz-m &\bf{ .0523} & \bf{.0459} & .0453 & \bf{.0458} &\bf{ .0508} & .0436 & .0420 & \bf{.0512} \\  \hline
  ATS-PCov & .0725 & \bf{.0517} & \bf{.0478} & .0443 & .0741 & \bf{.0538} & .0457 & \bf{.0499} \\  \hline
  ATS-WCov & .0774 & .0670 & .0632 & .0566 & .0832 & .0687 & .0593 & .0605 \\ \hline
  ATS-Cov & .0824 & .0552 & \bf{.0473} & .0441 & .0840 & .0559 &\bf{ .0465} & \bf{.0498} \\ \hline
  Steiger & .0220 & .0326 & .0419 &\bf{ .0457} & .0255 & .0355 & .0421 & \bf{.0507} \\   \hline
 SteigerFz & \bf{.0515} & \bf{.0471} & \bf{.0489} & \bf{.0491} & .0558 & \bf{.0517} & \bf{.0492} & \bf{.0536} \\   \hline
  Bartlett & \bf{.0470} & \bf{.0480} & \bf{.0488} & \bf{.0500} & \bf{.0525} & \bf{.0535} & \bf{.0479} & \bf{.0539} \\   
\specialrule{1.3pt}{0pt}{0pt}\multicolumn{9}{x{1.3pt}cx{1.3pt}}{}\\\specialrule{1.3pt}{0pt}{0pt}
   &\multicolumn{4}{cx{1.3pt}}{Skew Normal}&\multicolumn{4}{cx{1.3pt}}{Gamma}\\\specialrule{1.3pt}{0pt}{0pt}

    \hspace*{.1cm}N&   25&50&125&250&   25&50&125&250
       \\ \specialrule{1.3pt}{0pt}{0pt}
  ATS-Par &  .0846 & .0612 & \bf{.0543} & \bf{.0509} & .1134 & .0712 & .0594 & \bf{.0514} \\   \hline
  ATS-Wild & .1712 & .1030 & .0754 & .0596 & .2214 & .1297 & .0884 & .0668 \\   \hline
  ATS-Par-m & \bf{.0484} & .0440 & \bf{.0470} & \bf{.0481} & .0662 & \bf{.0516} & \bf{.0518} & \bf{.0485} \\   \hline
 ATS & .0961 & .0630 & \bf{.0531} & \bf{.0507} & .1287 & .0758 & .0596 & \bf{.0507} \\  \hline
  ATS-Tay  &\bf{ .0504} & .0408 & .0433 & .0456 & \bf{.0526} & .0366 & .0401 & .0420 \\ \hline
 ATSFz & .0914 & .0606 & \bf{.0523} & \bf{.0499} & .1163 & .0698 & .0589 & \bf{.0500} \\    \hline
  ATSFz-m & \bf{.0526} & .0442 & \bf{.0462} & \bf{.0462} & .0657 & \bf{.0485} & \bf{.0514} & \bf{.0472} \\   \hline
 ATS-PCov & .0727 & \bf{.0527} & \bf{.0488} & \bf{.0456} & .0908 & .0567 & \bf{.0500} & \bf{.0470} \\  \hline
   ATS-WCov & .0768 & .0663 & .0606 & .0576 & .0806 & .0630 & .0632 & .0570 \\  \hline
  ATS-Cov & .0820 & .0565 & \bf{.0484} & .0452 & .1034 & .0598 &\bf{ .0509} & .0453 \\   \hline
   Steiger& .0252 & .0333 & \bf{.0458} & \bf{.0479} & .0277 & .0384 & \bf{.0462} & \bf{.0464} \\  \hline
   SteigerFz& .0548 & \bf{.0486} & \bf{.0542} & \bf{.0499} & .0640 & \bf{.0541} & .0555 & \bf{.0498} \\   \hline
   Bartlett & \bf{.0531} & \bf{.0456} & \bf{.0516} & \bf{.0506} & .0612 & \bf{.0527 }& \bf{.0525} &\bf{ .0504 }\\   
  \specialrule{1.3pt}{0pt}{0pt}
\end{tabular}
\end{footnotesize}

\caption{Simulated type-I-error rates ($\alpha=5\%$) in scenario $B_{\vr}$ ($\mathcal{H}_0^{\vr}:\vr=\vnull_{10}$)  for ATS,  Steiger's and Bartlett's test. The observation vectors have dimension 5 and covariance matrix $\vV=\diag(1,1.2,1.4,1.6,1.8)$.}\label{tab:SimBCorS}
\end{table}

In \Cref{tab:SimBCorS}, the three tests based on the vectorized covariance matrix {behave} comparably as for other hypotheses from \cite{sattler2022}. The corresponding ATS with a parametric bootstrap has overall error rates that are as good as the parametric bootstrap ATS for vectorized correlation matrices. It can be assumed that the variance of the components determines whether the correlation matrix or the covariance matrix is more suitable for this hypothesis. Variances greater than 1 make the corresponding correlation coefficients smaller than the components from the covariance matrix. This increases the type-I error rate for the covariance-based test. Similar for variances smaller than 1, which lowers the type-I error rate. To illustrate, we multiplicated the covariance matrix with an appropriate diagonal matrix, where the results are displayed in \Cref{tab:SimECor}. We included only individual test statistics since this does not influence the correlation matrix. For these small variances of the single components, the type-I error rate of the covariance-based tests clearly decreased. This dependency on the variance of the components makes the correlation-based approach more reliable, also through the standardization with the covariances, there could be more variance.

\begin{table}[htp]
\centering
\begin{footnotesize}
\begin{tabular}{x{1.3pt}lx{1.3pt}c|c|c|cx{1.3pt}c|c|c|cx{1.3pt}}\specialrule{1.3pt}{0pt}{0pt}
   \multicolumn{1}{x{1.3pt}lx{1.3pt}}{} &\multicolumn{4}{ cx{1.3pt}}{$t_9$}&\multicolumn{4}{cx{1.3pt}}{Normal}\\
\specialrule{1.3pt}{0pt}{0pt} 
    \hspace*{.1cm}N&   25&50&125&250&   25&50&125&250
       \\ \specialrule{1.3pt}{0pt}{0pt}
ATS-Par & .0928 & .0639 & \bf{.0515} & \bf{ .0543} & .0798 & .0587 &\bf{ .0486} & \bf{.0526} \\ \hline
   ATS-Wild & .1859 & .1128 & .0768 & .0662 & .1584 & .0984 & .0644 & .0579 \\ \hline
  ATS & .1050 & .0681 & \bf{.0531} & \bf{.0532} & .0893 & .0617 & \bf{.0480} & \bf{.0514} \\ \hline
   ATS-PCov & .0645 & .0403 & .0314 & .0331 & .0587 & .0381 & .0288 & .0295 \\ \hline
  ATS-Cov & .0746 & .0426 & .0306 & .0335 & .0682 & .0398 & .0293 & .0300 \\

\specialrule{1.3pt}{0pt}{0pt}\multicolumn{9}{x{1.3pt}cx{1.3pt}}{}\\\specialrule{1.3pt}{0pt}{0pt}
   &\multicolumn{4}{cx{1.3pt}}{Skew Normal}&\multicolumn{4}{cx{1.3pt}}{Gamma}\\
\specialrule{1.3pt}{0pt}{0pt} 
    \hspace*{.1cm}N&   25&50&125&250&   25&50&125&250\\
\specialrule{1.3pt}{0pt}{0pt}

ATS-Par & .0872 & .0630 & .0568 &\bf{ .0516} & .1066 & .0740 & .0573 & \bf{.0519} \\ \hline
   ATS-Wild & .1686 & .1079 & .0792 & .0601 & .2132 & .1342 & .0810 & .0663 \\ \hline
  ATS & .0966 & .0674 & .0575 & \bf{.0507} & .1169 & .0776 & .0563 &\bf{ .0511} \\ \hline
   ATS-PCov & .0611 & .0415 & .0344 & .0306 & .0675 & \bf{.0489} & .0351 & .0339 \\ \hline
   ATS-Cov & .0689 & .0439 & .0358 & .0303 & .0779 & \bf{.0523} & .0356 & .0334 \\ 

  \specialrule{1.3pt}{0pt}{0pt}
\end{tabular}
\end{footnotesize}

\caption{Simulated type-I-error rates ($\alpha=5\%$) in scenario $B_{\vr}$ ($\mathcal{H}_0^{\vr}:\vr=\vnull_{10}$)  for the  ATS.  The observation vectors have dimension 5 and covariance matrix $\vV=\diag(0.2,0.4,0.6,0.8,1)$.}\label{tab:SimECor}
\end{table}

\begin{table}[ht]
\centering
\begin{footnotesize}
\begin{tabular}{x{1.3pt}lx{1.3pt}c|c|c|cx{1.3pt}c|c|c|cx{1.3pt}}\specialrule{1.3pt}{0pt}{0pt}
   \multicolumn{1}{x{1.3pt}lx{1.3pt}}{} &\multicolumn{4}{ cx{1.3pt}}{$t_9$}&\multicolumn{4}{cx{1.3pt}}{Normal}\\
  
\specialrule{1.3pt}{0pt}{0pt}
    \hspace*{.1cm}N&   25&50&125&250&   25&50&125&250
       \\ \specialrule{1.3pt}{0pt}{0pt} 
  ATS-Par &.0632 &\bf{ .0496} & \bf{.0476} & \bf{.0499} & \bf{.0523} & \bf{.0517} & .0456 & \bf{.0531} \\ \hline
   ATS-Wild & .1150 & .0789 & .0634 & .0591 & .0962 & .0752 & .0565 & .0585 \\  \hline
  ATS-Par-m &  .0328 & .0365 & .0419 & \bf{.0469} & .0282 & .0360 & .0395 &\bf{ .0501} \\ \hline
 ATS & .0713 & \bf{.0526} & .0477 & \bf{.0490} & .0618 & \bf{.0538} & .0454 & \bf{.0522} \\  \hline
  ATS-Tay& .0290 & .0285 & .0354 & .0409 & .0275 & .0300 & .0345 & \bf{.0459} \\  \hline

  ATSFz &.0666 & \bf{.0513} & \bf{.0463} & \bf{.0488} & .0563 & \bf{.0525} & .0454 & \bf{.0519} \\   \hline
   ATSFz-m & .0346 & .0369 & .0412 & \bf{.0460} & .0289 & .0353 & .0392 & \bf{.0486} \\  \hline

 \specialrule{1.3pt}{0pt}{0pt}\multicolumn{9}{x{1.3pt}cx{1.3pt}}{}\\\specialrule{1.3pt}{0pt}{0pt}
   &\multicolumn{4}{cx{1.3pt}}{Skew Normal}&\multicolumn{4}{cx{1.3pt}}{Gamma}\\\specialrule{1.3pt}{0pt}{0pt}

    \hspace*{.1cm}N&   25&50&125&250&   25&50&125&250\\ \specialrule{1.3pt}{0pt}{0pt}

  ATS-Par &  .0695 & \bf{.0521} & \bf{.0508} & \bf{.0508} & .0921 & .0727 & .0556 & \bf{.0526} \\  \hline
   ATS-Wild & .1197 & .0786 & .0637 & .0546 & .1575 & .1081 & .0726 & .0630 \\  \hline
  ATS-Par-m & .0388 & .0391 & .0447 & \bf{.0479} & \bf{.0531} & .0551 & \bf{.0500} & \bf{.0499} \\  \hline
 ATS & .0795 & .0551 & \bf{.0502} & \bf{.0497} & .1028 & .0752 & .0565 &\bf{ .0517} \\   \hline
  ATS-Tay& .0336 & .0313 & .0383 & .0439 & .0365 & .0339 & .0378 & .0403 \\    \hline
 ATSFz & .0751 & \bf{.0523} & \bf{.0490} & \bf{.0493} & .0986 & .0734 & .0551 & \bf{.0511} \\  \hline
 ATSFz-m& .0409 & .0399 & .0447 & \bf{.0470} & .0576 & .0575 & \bf{.0504} & \bf{.0481} \\  \hline
  \specialrule{1.3pt}{0pt}{0pt}
\end{tabular}
\end{footnotesize}

\caption{Simulated type-I-error rates ($\alpha=5\%$) in scenario  $C_{\vr}$ ($\mathcal{H}_0^{\vr}:r_{1}=r_{2}=...=r_{10}$)  for the ATS.  The observation vectors have dimension 5 and covariance matrix $\vV=\vI_5+\vJ_5$.}\label{tab:SimCCor}
\end{table}

In \Cref{tab:SimCCor} again $\varphi_{ATS^\dagger},\varphi_{ATS}$ and $\varphi_{ATSFz}$ have the best performance as well as their multiplicated versions. This multiplication makes the test less conservative, which is especially useful for smaller sample sizes but can lead to slightly conservative behaviour for larger sample sizes. But these tests fulfil Bradley's liberal criterion for $n_1\geq 50$ and often also for $n_1=25$. Overall,  it is noticeable that the difference between wild bootstrap and parametric bootstrap is essentially larger for one group than for two.
Moreover, the Taylor-based approach performs better for two groups since it gets slightly liberal for the settings with only one group.\\

{For higher dimensions, a worse small sample performance would be expected since the sample size is smaller in relation to the number of unknown parameters $p_u$. Indeed, this holds only for ATS-Par-m and ATSFz-m, while for other ones like the parametric bootstrap, the performance seems even better, as it can be seen in \Cref{tab:SimA1d7Cor}-\ref{tab:SimCd7Cor}. This result shows that the introduced techniques are also interesting for higher dimensions.}

    \begin{table}[htbp]
\centering
\begin{footnotesize}
\begin{tabular}{x{1.3pt}lx{1.3pt}c|c|c|cx{1.3pt}c|c|c|cx{1.3pt}}\specialrule{1.3pt}{0pt}{0pt}
   \multicolumn{1}{x{1.3pt}lx{1.3pt}}{} &\multicolumn{4}{ cx{1.3pt}}{$t_9$}&\multicolumn{4}{cx{1.3pt}}{Normal}\\
\specialrule{1.3pt}{0pt}{0pt}
    \hspace*{.1cm}N&   70&140&350&700&   70&140&350&700
       \\ \specialrule{1.3pt}{0pt}{0pt}
 ATS-Par &  .0660 & .0601 & .0572 & \bf{.0492} & .0685 & .0584 & \bf{.0539} & \bf{.0505} \\  \hline
 ATS-Wild & .0707 & .0618 & .0586 & \bf{.0520} & .0734 & .0593 & \bf{.0542} & \bf{.0517} \\  \hline
 ATS-Par-m & .0598 & .0573 & .0557 & \bf{.0488} & .0620 & .0553 & \bf{.0526} & \bf{.0501} \\    \hline
 ATS & .0695 & .0608 & .0571 &\bf{ .0499 }& .0720 & .0574 & \bf{.0543} & \bf{.0514}\\  \hline
ATS-Tay &.0548 & \bf{.0534} & \bf{.0541} & \bf{.0477} & .0578 & \bf{.0516} & \bf{.0508} & \bf{.0490} \\   \hline

    ATSFz & .0680 & .0600 & .0569 &\bf{ .0496} & .0704 & .0572 & \bf{.0527} & \bf{.0512} \\  \hline
 ATSFz-m &.0618 & .0573 & .0555 & \bf{.0493} & .0629 & .0545 & \bf{.0518} & \bf{.0505}\\   \hline
   Steiger & $<$.001&$<$.001 &$<$.001&$<$.001 &$<$.001 &$<$.001 &$<$.001&$<$.001 \\ \hline
   SteigerFz &.1372 & .1601 & .1705 & .1744 & .1413 & .1472 & .1670 & .1698 \\   \hline
 Jennrich & .6115 & .7344 & .7978 & .8113 & .5985 & .7058 & .7738 & .7888 \\  
 \specialrule{1.3pt}{0pt}{0pt}\multicolumn{9}{x{1.3pt}cx{1.3pt}}{}\\\specialrule{1.3pt}{0pt}{0pt}
   &\multicolumn{4}{cx{1.3pt}}{Skew Normal} &\multicolumn{4}{cx{1.3pt}}{Gamma}\\\specialrule{1.3pt}{0pt}{0pt}

    \hspace*{.1cm}N&   70&140&350&700&   70&140&350&700
       \\ \specialrule{1.3pt}{0pt}{0pt}
 ATS-Par & .0701 & .0627 & \bf{.0537} & .0561 & .0756 & .0657 & .0602 & \bf{.0534} \\  \hline
 ATS-Wild &.0752 & .0654 & .0546 & .0564 & .0798 & .0698 & .0603 & \bf{.0535} \\   \hline
 ATS-Par-m & .0623 & .0592 & \bf{.0529} & .0556 & .0690 & .0630 & .0588 & \bf{.0532} \\    \hline
 ATS & .0724 & .0639 & \bf{.0542} & .0552 & .0783 & .0693 & .0581 & \bf{.0530} \\  \hline
ATS-Tay &.0586 & .0551 & \bf{.0513} & \bf{.0542} & .0602 & .0587 & \bf{.0542} & \bf{.0506} \\   \hline

    ATSFz & .0700 & .0627 & \bf{.0539} & .0554 & .0759 & .0665 & .0577 & \bf{.0531}\\  \hline
 ATSFz-m &.0636 & .0598 & \bf{.0529} & { .0549} & .0699 & .0641 & .0562 & \bf{.0521} \\    \hline
   Steiger & $<$.001&$<$.001 &$<$.001&$<$.001 &$<$.001 &$<$.001 &$<$.001&$<$.001 \\ \hline
   SteigerFz & .1398 & .1593 & .1639 & .1725 & .1504 & .1730 & .1868 & .1753 \\  \hline
 Jennrich & .6231 & .7257 & .7819 & .8018 & .6307 & .7456 & .8251 & .8374  \\  
\specialrule{1.3pt}{0pt}{0pt}
\end{tabular}
\end{footnotesize}

\caption{Simulated type-I-error rates ($\alpha=5\%$) in scenario $A_{\vr}$ ($\mathcal{H}_0^{\vr}:\vR_{1}=\vR_{2}$)  for ATS, Steiger's and Jennrich's test. The observation vectors have dimension 7, covariance matrix $(\vV_1)_{ij}=1-{\lvert i-j\lvert/2d}$ resp. $\vV_2=\diag(7,8,9,10,11,12,13)/7\vV_1$ and it always holds $n_1:=0.6\cdot N$ resp. $n_2:=0.4\cdot N$.}\label{tab:SimA1d7Cor}\end{table}

\begin{table}[htbp]
\centering
\begin{footnotesize}
\begin{tabular}{x{1.3pt}lx{1.3pt}c|c|c|cx{1.3pt}c|c|c|cx{1.3pt}}\specialrule{1.3pt}{0pt}{0pt}
   \multicolumn{1}{x{1.3pt}lx{1.3pt}}{} &\multicolumn{4}{ cx{1.3pt}}{$t_9$}&\multicolumn{4}{cx{1.3pt}}{Normal}\\
\specialrule{1.3pt}{0pt}{0pt}
    \hspace*{.1cm}N&   70&140&350&700&   70&140&350&700
       \\ \specialrule{1.3pt}{0pt}{0pt}
 ATS-Par & .0921 & .0750 & .0583 & .0588 & .0895 & .0688 & .0591 & \bf{.0533} \\  \hline
 ATS-Wild & .1111 & .0843 & .0633 & .0592 & .1082 & .0762 & .0630 & .0559 \\   \hline
 ATS-Par-m &  .0799 & .0705 & .0568 & .0580 & .0790 & .0635 & .0568 & \bf{.0526} \\    \hline
 ATS &  .0938 & .0764 & .0582 & .0561 & .0921 & .0687 & .0588 & .0546 \\  \hline
ATS-Tay &.0649 & .0585 & \bf{.0524} & \bf{.0535} & .0668 & .0567 & .0551 & \bf{.0500} \\   \hline

    ATSFz & .0901 & .0747 & .0574 & .0563 & .0894 & .0672 & .0587 & \bf{.0530} \\  \hline
 ATSFz-m & .0794 & .0700 & .0562 & .0556 & .0782 & .0628 & .0569 & \bf{.0525} \\   \hline
   Steiger &  .0073 & .0122 & .0125 & .0127 & .0083 & .0117 & .0119 & .0112 \\   \hline
   SteigerFz &.0908 & .1129 & .1177 & .1190 & .0890 & .1016 & .1180 & .1195 \\   \hline
  Jennrich & .3053 & .2304 & .1864 & .1825 & .2975 & .2143 & .1771 & .1587  \\    
 \specialrule{1.3pt}{0pt}{0pt}\multicolumn{9}{x{1.3pt}cx{1.3pt}}{}\\\specialrule{1.3pt}{0pt}{0pt}
   &\multicolumn{4}{cx{1.3pt}}{Skew Normal} &\multicolumn{4}{cx{1.3pt}}{Gamma}\\\specialrule{1.3pt}{0pt}{0pt}

    \hspace*{.1cm}N&   70&140&350&700&   70&140&350&700
       \\ \specialrule{1.3pt}{0pt}{0pt}
 ATS-Par & .0913 & .0737 & .0563 & .0591 & .1049 & .0791 & .0649 & .0553 \\ \hline
 ATS-Wild & .1132 & .0818 & .0606 & .0597 & .1259 & .0900 & .0702 & .0585 \\   \hline
 ATS-Par-m &.0796 & .0689 & .0554 & .0580 & .0922 & .0734 & .0636 & \bf{.0541} \\ \hline
 ATS   &.0956 & .0751 & .0554 & .0584 & .1062 & .0793 & .0651 & \bf{.0538} \\ \hline
ATS-Tay & .0643 & .0585 & \bf{.0510} & .0550 & .0679 & .0598 & .0575 & \bf{.0505} \\ \hline

    ATSFz &.0930 & .0737 & .0554 & .0584 & .1043 & .0783 & .0641 & .0545 \\   \hline
 ATSFz-m &  .0792 & .0681 & \bf{ .0537} & .0574 & .0921 & .0724 & .0627 & \bf{.0531} \\  \hline
   Steiger & .0080 & .0117 & .0108 & .0137 & .0082 & .0091 & .0157 & .0133  \\ \hline
   SteigerFz &.0923 & .1068 & .1156 & .1207 & .0982 & .1189 & .1315 & .1280 \\  \hline
  Jennrich& .3017 & .2257 & .1759 & .1654 & .3125 & .2366 & .2018 & .1879 \\  
\specialrule{1.3pt}{0pt}{0pt}
\end{tabular}

\end{footnotesize}

\caption{Simulated type-I-error rates ($\alpha=5\%$) in scenario $A_{\vr}$ ($\mathcal{H}_0^{\vr}:\vR_{1}=\vR_{2}$)  for ATS, Steiger's and Jennrich's test. The observation vectors have dimension 7, covariance matrices $(\vV_1)_{ij}=0.6^{\lvert i-j\lvert}$   resp. $\vV_2=\diag(7,8,9,10,11,12,13)/7\vV_1$ and it always holds $n_1:=0.6\cdot N$ resp. $n_2:=0.4\cdot N$.}\label{tab:SimA2d7Cor}
\end{table}

\begin{table}[htp]
\centering
\begin{footnotesize}
\begin{tabular}{x{1.3pt}lx{1.3pt}c|c|c|cx{1.3pt}c|c|c|cx{1.3pt}}\specialrule{1.3pt}{0pt}{0pt}
   \multicolumn{1}{x{1.3pt}lx{1.3pt}}{} &\multicolumn{4}{ cx{1.3pt}}{$t_9$}&\multicolumn{4}{cx{1.3pt}}{Normal}\\

\specialrule{1.3pt}{0pt}{0pt} 
    \hspace*{.1cm}N&   35&70&175&350&   35&70&175&350
       \\ \specialrule{1.3pt}{0pt}{0pt}
  ATS-Par& .0615 & \bf{.0520} & .0452 & .0450 & \bf{.0538} & .0456 & \bf{.0483} & \bf{.0508} \\   \hline
  ATS-Wild &  .1821 & .1149 & .0708 & .0606 & .1591 & .0908 & .0695 & .0598 \\ \hline
  ATS-Par-m & .0304 & .0368 & .0399 & .0423 & .0272 & .0325 & .0433 & \bf{.0483}  \\  \hline
ATS & .0658 & \bf{.0528} & .0444 & \bf{.0458} & .0588 & \bf{.0459} & \bf{.0496} & \bf{.0497} \\ \hline
  ATS-Tay & .0245 & .0304 & .0349 & .0393 & .0296 & .0304 & .0413 & .0452\\  \hline
  ATSFz  & .0646 & \bf{.0496} & .0436 & .0454 & .0589 & .0455 & \bf{.0484} & \bf{.0491} \\ \hline
 ATSFz-m & .0318 & .0363 & .0391 & .0424 & .0295 & .0325 & .0422 & \bf{.0463} \\  \hline
 
  Steiger &  .0266 & .0407 & .0429 & \bf{.0477} & .0255 & .0350 & \bf{.0481} & \bf{.0482} \\   \hline
 SteigerFz & \bf{.0530} & .0554 & \bf{.0483} & \bf{.0511} & .0548 & \bf{.0479} & \bf{.0531} & \bf{.0512} \\   \hline
  Bartlett & \bf{.0506} & .0549 & \bf{.0469} & \bf{.0514} & \bf{.0531} & \bf{.0465} & \bf{.0519} & \bf{.0501}\\   
\specialrule{1.3pt}{0pt}{0pt}\multicolumn{9}{x{1.3pt}cx{1.3pt}}{}\\\specialrule{1.3pt}{0pt}{0pt}
   &\multicolumn{4}{cx{1.3pt}}{Skew Normal}&\multicolumn{4}{cx{1.3pt}}{Gamma}\\\specialrule{1.3pt}{0pt}{0pt}

    \hspace*{.1cm}N&   35&70&175&350&  35&70&175&350
       \\ \specialrule{1.3pt}{0pt}{0pt}
  ATS-Par &  .0604 & \bf{.0487} & .0443 & \bf{.0491} & .0737 & \bf{.0520} & .0456 & \bf{.0491} \\    \hline
  ATS-Wild &.1748 & .0999 & .0692 & .0627 & .2242 & .1306 & .0813 & .0667 \\   \hline
  ATS-Par-m &  .0306 & .0348 & .0392 & \bf{.0460} & .0411 & .0370 & .0397 & \bf{.0464} \\   \hline
 ATS &.0655 & \bf{.0482} & .0440 & \bf{.0491} & .0785 & \bf{.0534} & \bf{.0463} & \bf{.0475} \\   \hline
  ATS-Tay  &.0261 & .0288 & .0349 & .0449 & .0260 & .0222 & .0288 & .0381 \\  \hline
 ATSFz & .0659 & \bf{.0469} & .0432 & \bf{.0494} & .0730 & \bf{.0508} & .0455 & \bf{.0480} \\   \hline
  ATSFz-m &.0331 & .0342 & .0390 & \bf{.0464} & .0413 & .0353 & .0390 & .0445 \\    \hline
   Steiger& .0271 & .0374 & .0419 & \bf{.0492} & .0288 & .0379 & \bf{.0464} & \bf{.0510} \\   \hline
   SteigerFz& .0582 & \bf{.0518} & \bf{.0474} & \bf{.0523} & .0578 & .0547 & \bf{.0521} & \bf{.0536} \\   \hline
   Bartlett & .0556 & \bf{.0500} & \bf{.0468} & \bf{.0510} & .0566 & \bf{.0506} & \bf{.0536} & \bf{.0541} \\    
  \specialrule{1.3pt}{0pt}{0pt}
\end{tabular}
\end{footnotesize}

\caption{Simulated type-I-error rates ($\alpha=5\%$) in scenario $B_{\vr}$ ($\mathcal{H}_0^{\vr}:\vr=\vnull_{21}$)  for ATS,  Steiger's and Bartlett's test. The observation vectors have dimension 7 and covariance matrix $\vV=\diag(7,8,9,10,11,12,13)/7$.}\label{tab:SimBd7Cor}
\end{table}

\begin{table}[ht]
\centering
\begin{footnotesize}
\begin{tabular}{x{1.3pt}lx{1.3pt}c|c|c|cx{1.3pt}c|c|c|cx{1.3pt}}\specialrule{1.3pt}{0pt}{0pt}
   \multicolumn{1}{x{1.3pt}lx{1.3pt}}{} &\multicolumn{4}{ cx{1.3pt}}{$t_9$}&\multicolumn{4}{cx{1.3pt}}{Normal}\\
  
\specialrule{1.3pt}{0pt}{0pt}
    \hspace*{.1cm}N&   35&70&175&350&   35&70&175&350
       \\ \specialrule{1.3pt}{0pt}{0pt} 
  ATS-Par &\bf{.0459} & .0443 & .0436 & \bf{.0461} & .0438 & .0431 & \bf{.0468} & \bf{.0496} \\  \hline
   ATS-Wild & .1117 & .0837 & .0644 & .0551 & .0995 & .0749 & .0588 & .0556 \\   \hline
  ATS-Par-m &   .0249 & .0340 & .0385 & .0428 & .0242 & .0318 & .0420 & \bf{.0475} \\ \hline
 ATS &  \bf{.0501} & \bf{.0466} & .0433 & .0447 & \bf{.0476} & .0440 & \bf{.0469} & \bf{.0490} \\   \hline
  ATS-Tay&  .0154 & .0218 & .0288 & .0369 & .0167 & .0226 & .0338 & .0425 \\  \hline

  ATSFz &\bf{.0467} & .0448 & .0431 & .0440 & {.0457} & .0427 & \bf{.0469} & \bf{.0488} \\  \hline
   ATSFz-m &  .0259 & .0322 & .0384 & .0421 & .0232 & .0311 & .0422 & .\bf{0464} \\   \hline

 \specialrule{1.3pt}{0pt}{0pt}\multicolumn{9}{x{1.3pt}cx{1.3pt}}{}\\\specialrule{1.3pt}{0pt}{0pt}
   &\multicolumn{4}{cx{1.3pt}}{Skew Normal}&\multicolumn{4}{cx{1.3pt}}{Gamma}\\\specialrule{1.3pt}{0pt}{0pt}

    \hspace*{.1cm}N&   35&70&175&350&   35&70&175&350\\ \specialrule{1.3pt}{0pt}{0pt} 

  ATS-Par & \bf{ .0505} & .0453 & .0457 & \bf{ .0461} & .0693 & .0575 & \bf{ .0518} & \bf{ .0523} \\   \hline
   ATS-Wild & .1127 & .0781 & .0604 & .0547 & .1546 & .1031 & .0722 & .0624 \\   \hline
  ATS-Par-m &  .0284 & .0326 & .0404 & .0439 & .0422 & { .0453} & \bf{ .0460} & \bf{ .0503} \\   \hline
 ATS & .0546 & .0455 & .0456 & \bf{ .0463} & .0731 & .0603 & \bf{ .0513} & \bf{ .0518} \\  \hline
  ATS-Tay& .0207 & .0231 & .0322 & .0378 & .0211 & .0234 & .0330 & .0425 \\    \hline
 ATSFz &\bf{  .0525} & .0447 & .0453 & .0454 & .0727 & .0590 & \bf{ .0507} & \bf{ .0512} \\   \hline
 ATSFz-m&  .0294 & .0334 & .0396 & .0426 & .0433 & .0449 & \bf{ .0464} & \bf{ .0490} \\  \hline
  \specialrule{1.3pt}{0pt}{0pt}
\end{tabular}
\end{footnotesize}

\caption{Simulated type-I-error rates ($\alpha=5\%$) in scenario  $C_{\vr}$ ($\mathcal{H}_0^{\vr}:r_{1}=r_{2}=...=r_{21}$)  for the ATS.  The observation vectors have dimension 7 and covariance matrix $\vV=\vI_7+\vJ_7$.}\label{tab:SimCd7Cor}
\end{table}

\subsection*{Power}
To examine the powers dependency on the underlying distribution, the power simulation is now done with the same setting but a gamma distribution instead of a skewed normal distribution.
As it can be seen in   \Cref{fig:PowerCorABS} the results are very similar to the power for the skewed normal distribution. This suggests that the results do not depend on the underlying distribution.\\

\captionsetup[figure]{position=below}
\begin{figure}[htb]
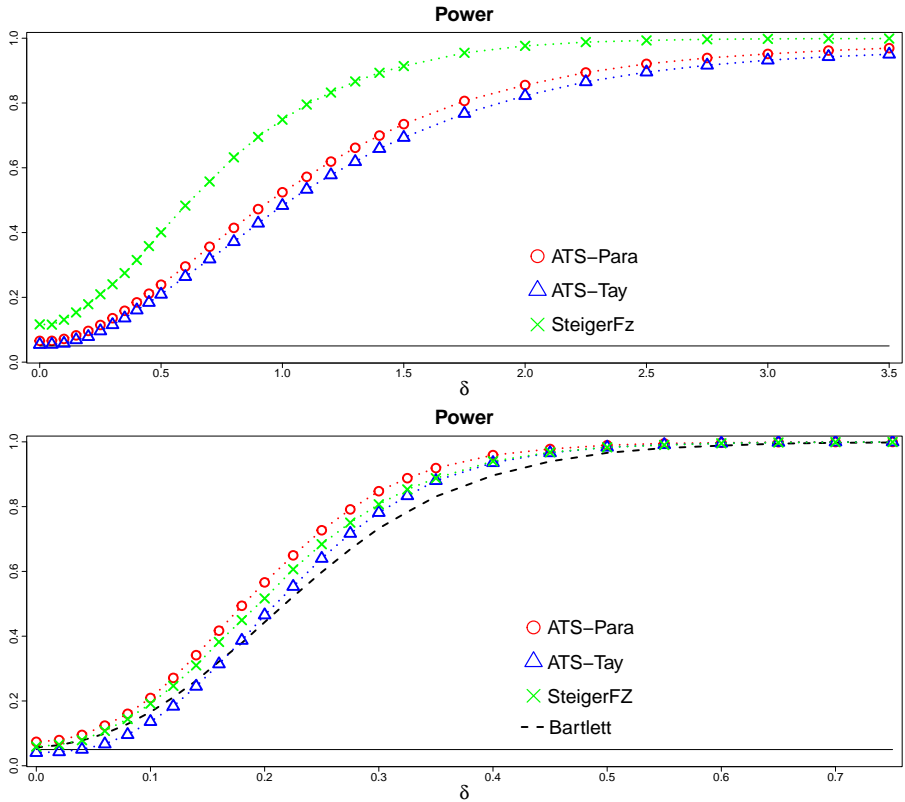

\begin{minipage}[t]{0.99\textwidth}\vspace{0pt} 
\includegraphics[width=\textwidth,trim= 11.5mm 3mm 10.mm 6mm,clip]{PowerCorA4} 
\end{minipage}\\ 
\begin{minipage}[t]{0.99\textwidth}\vspace{0pt} 
\includegraphics[width=\textwidth,trim= 11.5mm 4mm 10mm 5mm,clip]{PowerCorB4} 
\end{minipage} %
\caption{ 
Simulated power curves of different tests for the hypothesis $A_{\vr}$ ($\mathcal H_0^{\vr}: \vR_1 =  \vR_2 $) above, and
hypothesis $B_{\vr}$ ($\mathcal H_0^{\vr}: \vr_1 =  \vnull_{10} $) below, for a 5-dimensional gamma distribution. For hypothesis $A_{\vr}$ it holds $n_1=60$, $n_2=40$ and the covariance matrix is $(\vV_2)_{ij}=1-\lvert i-j\lvert/2d$ resp. $\vV_1=\vV_2+\delta \vJ_5$.  The covariance matrix  for hypothesis $B_{\vr}$ is $\vV=\vI_5+\delta \vJ_5$ and $n_1=50$.   }

\label{fig:PowerCorABS} 
\end{figure} 
One last time, this shows that our developed test offers an alternative to the existing test procedure, which is often preferable due to its flexibility and good power and type-I error rate in different settings.

\bibliographystyle{apacite}
\newpage
\phantomsection
\addcontentsline{toc}{chapter}{Bibliography}

\clearpage
\bibliography{Literatur}

\begin{thebibliography}{}

\bibitem [\protect \citeauthoryear {%
Aitkin%
, Nelson%
\BCBL {}\ \BBA {} Reinfurt%
}{%
Aitkin%
\ \protect \BOthers {.}}{%
{\protect \APACyear {1968}}%
}]{%
aitkin1968}
\APACinsertmetastar {%
aitkin1968}%
\begin{APACrefauthors}%
Aitkin, M\BPBI A.%
, Nelson, W\BPBI C.%
\BCBL {}\ \BBA {} Reinfurt, K\BPBI H.%
\end{APACrefauthors}%
\unskip\
\newblock
\APACrefYearMonthDay{1968}{07}{}.
\newblock
{\BBOQ}\APACrefatitle {{Tests for correlation matrices}} {{Tests for
  correlation matrices}}.{\BBCQ}
\newblock
\APACjournalVolNumPages{Biometrika}{55}{2}{327-334}.
\newblock
\begin{APACrefURL} \url{https://doi.org/10.1093/biomet/55.2.327}
  \end{APACrefURL}
\newblock
\begin{APACrefDOI} \doi{10.1093/biomet/55.2.327} \end{APACrefDOI}
\PrintBackRefs{\CurrentBib}

\bibitem [\protect \citeauthoryear {%
Bartlett%
}{%
Bartlett%
}{%
{\protect \APACyear {1951}}%
}]{%
bartlett1951}
\APACinsertmetastar {%
bartlett1951}%
\begin{APACrefauthors}%
Bartlett, M\BPBI S.%
\end{APACrefauthors}%
\unskip\
\newblock
\APACrefYearMonthDay{1951}{}{}.
\newblock
{\BBOQ}\APACrefatitle {The Effect of Standardization on a chi square
  Approximation in Factor Analysis} {The effect of standardization on a chi
  square approximation in factor analysis}.{\BBCQ}
\newblock
\APACjournalVolNumPages{Biometrika}{38}{3/4}{337--344}.
\newblock
\begin{APACrefURL} \url{http://www.jstor.org/stable/2332580} \end{APACrefURL}
\PrintBackRefs{\CurrentBib}

\bibitem [\protect \citeauthoryear {%
Bartlett%
\ \BBA {} Rajalakshman%
}{%
Bartlett%
\ \BBA {} Rajalakshman%
}{%
{\protect \APACyear {1953}}%
}]{%
bartlett1953}
\APACinsertmetastar {%
bartlett1953}%
\begin{APACrefauthors}%
Bartlett, M\BPBI S.%
\BCBT {}\ \BBA {} Rajalakshman, D\BPBI V.%
\end{APACrefauthors}%
\unskip\
\newblock
\APACrefYearMonthDay{1953}{}{}.
\newblock
{\BBOQ}\APACrefatitle {Goodness of Fit Tests for Simultaneous Autoregressive
  Series} {Goodness of fit tests for simultaneous autoregressive
  series}.{\BBCQ}
\newblock
\APACjournalVolNumPages{Journal of the Royal Statistical Society. Series B
  (Methodological)}{15}{1}{107--124}.
\newblock
\begin{APACrefURL} \url{http://www.jstor.org/stable/2983727} \end{APACrefURL}
\PrintBackRefs{\CurrentBib}

\bibitem [\protect \citeauthoryear {%
Bathke%
\ \protect \BOthers {.}}{%
Bathke%
\ \protect \BOthers {.}}{%
{\protect \APACyear {2018}}%
}]{%
bathke2018}
\APACinsertmetastar {%
bathke2018}%
\begin{APACrefauthors}%
Bathke, A\BPBI C.%
, Friedrich, S.%
, Pauly, M.%
, Konietschke, F.%
, Staffen, W.%
, Strobl, N.%
\BCBL {}\ \BBA {} Höller, Y.%
\end{APACrefauthors}%
\unskip\
\newblock
\APACrefYearMonthDay{2018}{03}{}.
\newblock
{\BBOQ}\APACrefatitle {Testing Mean Differences among Groups: Multivariate and
  Repeated Measures Analysis with Minimal Assumptions} {Testing mean
  differences among groups: Multivariate and repeated measures analysis with
  minimal assumptions}.{\BBCQ}
\newblock
\APACjournalVolNumPages{Multivariate Behavioral Research}{53}{}{348-359}.
\newblock
\begin{APACrefDOI} \doi{10.1080/00273171.2018.1446320} \end{APACrefDOI}
\PrintBackRefs{\CurrentBib}

\bibitem [\protect \citeauthoryear {%
Boos%
\ \BBA {} Brownie%
}{%
Boos%
\ \BBA {} Brownie%
}{%
{\protect \APACyear {2004}}%
}]{%
boos2004}
\APACinsertmetastar {%
boos2004}%
\begin{APACrefauthors}%
Boos, D\BPBI D.%
\BCBT {}\ \BBA {} Brownie, C.%
\end{APACrefauthors}%
\unskip\
\newblock
\APACrefYearMonthDay{2004}{11}{}.
\newblock
{\BBOQ}\APACrefatitle {Comparing Variances and Other Measures of Dispersion}
  {Comparing variances and other measures of dispersion}.{\BBCQ}
\newblock
\APACjournalVolNumPages{Statist. Sci.}{19}{4}{571--578}.
\newblock
\begin{APACrefURL} \url{https://doi.org/10.1214/088342304000000503}
  \end{APACrefURL}
\newblock
\begin{APACrefDOI} \doi{10.1214/088342304000000503} \end{APACrefDOI}
\PrintBackRefs{\CurrentBib}

\bibitem [\protect \citeauthoryear {%
Box%
}{%
Box%
}{%
{\protect \APACyear {1950}}%
}]{%
box1950}
\APACinsertmetastar {%
box1950}%
\begin{APACrefauthors}%
Box, G\BPBI E\BPBI P.%
\end{APACrefauthors}%
\unskip\
\newblock
\APACrefYearMonthDay{1950}{}{}.
\newblock
{\BBOQ}\APACrefatitle {Problems in the Analysis of Growth and Wear Curves}
  {Problems in the analysis of growth and wear curves}.{\BBCQ}
\newblock
\APACjournalVolNumPages{Biometrics}{6}{4}{362--389}.
\newblock
\begin{APACrefURL} \url{http://www.jstor.org/stable/3001781} \end{APACrefURL}
\PrintBackRefs{\CurrentBib}

\bibitem [\protect \citeauthoryear {%
Bradley%
}{%
Bradley%
}{%
{\protect \APACyear {1978}}%
}]{%
bradley1978}
\APACinsertmetastar {%
bradley1978}%
\begin{APACrefauthors}%
Bradley, J\BPBI V.%
\end{APACrefauthors}%
\unskip\
\newblock
\APACrefYearMonthDay{1978}{}{}.
\newblock
{\BBOQ}\APACrefatitle {Robustness?} {Robustness?}{\BBCQ}
\newblock
\APACjournalVolNumPages{British Journal of Mathematical and Statistical
  Psychology}{31}{2}{144-152}.
\newblock
\begin{APACrefURL}
  \url{https://onlinelibrary.wiley.com/doi/abs/10.1111/j.2044-8317.1978.tb00581.x}
  \end{APACrefURL}
\newblock
\begin{APACrefDOI} \doi{10.1111/j.2044-8317.1978.tb00581.x} \end{APACrefDOI}
\PrintBackRefs{\CurrentBib}

\bibitem [\protect \citeauthoryear {%
Bretz%
, Genz%
\BCBL {}\ \BBA {} A.~Hothorn%
}{%
Bretz%
\ \protect \BOthers {.}}{%
{\protect \APACyear {2001}}%
}]{%
bretz2001}
\APACinsertmetastar {%
bretz2001}%
\begin{APACrefauthors}%
Bretz, F.%
, Genz, A.%
\BCBL {}\ \BBA {} A.~Hothorn, L.%
\end{APACrefauthors}%
\unskip\
\newblock
\APACrefYearMonthDay{2001}{}{}.
\newblock
{\BBOQ}\APACrefatitle {On the Numerical Availability of Multiple Comparison
  Procedures} {On the numerical availability of multiple comparison
  procedures}.{\BBCQ}
\newblock
\APACjournalVolNumPages{Biometrical Journal}{43}{5}{645-656}.
\newblock
\begin{APACrefURL}
  \url{https://onlinelibrary.wiley.com/doi/abs/10.1002/1521-4036%28200109%2943%3A5%3C645%3A%3AAID-BIMJ645%3E3.0.CO%3B2-F}
  \end{APACrefURL}
\newblock
\begin{APACrefDOI}
  \doi{https://doi.org/10.1002/1521-4036(200109)43:5<645::AID-BIMJ645>3.0.CO;2-F}
  \end{APACrefDOI}
\PrintBackRefs{\CurrentBib}

\bibitem [\protect \citeauthoryear {%
Browne%
\ \BBA {} Shapiro%
}{%
Browne%
\ \BBA {} Shapiro%
}{%
{\protect \APACyear {1986}}%
}]{%
browne}
\APACinsertmetastar {%
browne}%
\begin{APACrefauthors}%
Browne, M.%
\BCBT {}\ \BBA {} Shapiro, A.%
\end{APACrefauthors}%
\unskip\
\newblock
\APACrefYearMonthDay{1986}{}{}.
\newblock
{\BBOQ}\APACrefatitle {The asymptotic covariance matrix of sample correlation
  coefficients under general conditions} {The asymptotic covariance matrix of
  sample correlation coefficients under general conditions}.{\BBCQ}
\newblock
\APACjournalVolNumPages{Linear Algebra and its Applications}{82}{}{169-176}.
\newblock
\begin{APACrefURL}
  \url{http://www.sciencedirect.com/science/article/pii/0024379586901503}
  \end{APACrefURL}
\newblock
\begin{APACrefDOI} \doi{https://doi.org/10.1016/0024-3795(86)90150-3}
  \end{APACrefDOI}
\PrintBackRefs{\CurrentBib}

\bibitem [\protect \citeauthoryear {%
Efron%
}{%
Efron%
}{%
{\protect \APACyear {1988}}%
}]{%
efron1988}
\APACinsertmetastar {%
efron1988}%
\begin{APACrefauthors}%
Efron, B.%
\end{APACrefauthors}%
\unskip\
\newblock
\APACrefYearMonthDay{1988}{}{}.
\newblock
{\BBOQ}\APACrefatitle {Bootstrap confidence intervals: good or bad?} {Bootstrap
  confidence intervals: good or bad?}{\BBCQ}
\newblock
\APACjournalVolNumPages{Psychological bulletin}{104}{2}{293}.
\PrintBackRefs{\CurrentBib}

\bibitem [\protect \citeauthoryear {%
Fisher%
}{%
Fisher%
}{%
{\protect \APACyear {1921}}%
}]{%
fisher1921}
\APACinsertmetastar {%
fisher1921}%
\begin{APACrefauthors}%
Fisher, R.%
\end{APACrefauthors}%
\unskip\
\newblock
\APACrefYearMonthDay{1921}{}{}.
\newblock
{\BBOQ}\APACrefatitle {On the “probable error” of a correlation coefficient
  deduced from a small sample} {On the “probable error” of a correlation
  coefficient deduced from a small sample}.{\BBCQ}
\newblock
\APACjournalVolNumPages{Metron}{1}{4}{3--32}.
\PrintBackRefs{\CurrentBib}

\bibitem [\protect \citeauthoryear {%
Friedrich%
, Brunner%
\BCBL {}\ \BBA {} Pauly%
}{%
Friedrich%
\ \protect \BOthers {.}}{%
{\protect \APACyear {2017}}%
}]{%
friedrich2017permuting}
\APACinsertmetastar {%
friedrich2017permuting}%
\begin{APACrefauthors}%
Friedrich, S.%
, Brunner, E.%
\BCBL {}\ \BBA {} Pauly, M.%
\end{APACrefauthors}%
\unskip\
\newblock
\APACrefYearMonthDay{2017}{}{}.
\newblock
{\BBOQ}\APACrefatitle {Permuting longitudinal data in spite of the
  dependencies} {Permuting longitudinal data in spite of the
  dependencies}.{\BBCQ}
\newblock
\APACjournalVolNumPages{Journal of Multivariate Analysis}{153}{}{255--265}.
\PrintBackRefs{\CurrentBib}

\bibitem [\protect \citeauthoryear {%
Friedrich%
, Konietschke%
\BCBL {}\ \BBA {} Pauly%
}{%
Friedrich%
\ \protect \BOthers {.}}{%
{\protect \APACyear {2019}}%
}]{%
manova}
\APACinsertmetastar {%
manova}%
\begin{APACrefauthors}%
Friedrich, S.%
, Konietschke, F.%
\BCBL {}\ \BBA {} Pauly, M.%
\end{APACrefauthors}%
\unskip\
\newblock
\APACrefYearMonthDay{2019}{}{}.
\newblock
{\BBOQ}\APACrefatitle {MANOVA.RM: Analysis of Multivariate Data and Repeated
  Measures Designs} {Manova.rm: Analysis of multivariate data and repeated
  measures designs}{\BBCQ}\ [\bibcomputersoftwaremanual].
\newblock
\begin{APACrefURL} \url{https://cran.r-project.org/package=MANOVA.RM}
  \end{APACrefURL}
\newblock
\APACrefnote{R package version 0.3.2}
\PrintBackRefs{\CurrentBib}

\bibitem [\protect \citeauthoryear {%
Friedrich%
\ \BBA {} Pauly%
}{%
Friedrich%
\ \BBA {} Pauly%
}{%
{\protect \APACyear {2017}}%
}]{%
friedrich2017mats}
\APACinsertmetastar {%
friedrich2017mats}%
\begin{APACrefauthors}%
Friedrich, S.%
\BCBT {}\ \BBA {} Pauly, M.%
\end{APACrefauthors}%
\unskip\
\newblock
\APACrefYearMonthDay{2017}{04}{}.
\newblock
{\BBOQ}\APACrefatitle {MATS: Inference for potentially singular and
  heteroscedastic MANOVA} {Mats: Inference for potentially singular and
  heteroscedastic manova}.{\BBCQ}
\newblock
\APACjournalVolNumPages{Journal of Multivariate Analysis}{165}{}{166-179}.
\newblock
\begin{APACrefDOI} \doi{10.1016/j.jmva.2017.12.008} \end{APACrefDOI}
\PrintBackRefs{\CurrentBib}

\bibitem [\protect \citeauthoryear {%
Gaißer%
\ \BBA {} Schmid%
}{%
Gaißer%
\ \BBA {} Schmid%
}{%
{\protect \APACyear {2010}}%
}]{%
gaißer2010}
\APACinsertmetastar {%
gaißer2010}%
\begin{APACrefauthors}%
Gaißer, S.%
\BCBT {}\ \BBA {} Schmid, F.%
\end{APACrefauthors}%
\unskip\
\newblock
\APACrefYearMonthDay{2010}{}{}.
\newblock
{\BBOQ}\APACrefatitle {On testing equality of pairwise rank correlations in a
  multivariate random vector} {On testing equality of pairwise rank
  correlations in a multivariate random vector}.{\BBCQ}
\newblock
\APACjournalVolNumPages{Journal of Multivariate Analysis}{101}{10}{2598-2615}.
\newblock
\begin{APACrefURL}
  \url{https://www.sciencedirect.com/science/article/pii/S0047259X10001557}
  \end{APACrefURL}
\newblock
\begin{APACrefDOI} \doi{https://doi.org/10.1016/j.jmva.2010.07.008}
  \end{APACrefDOI}
\PrintBackRefs{\CurrentBib}

\bibitem [\protect \citeauthoryear {%
Gupta%
\ \BBA {} Xu%
}{%
Gupta%
\ \BBA {} Xu%
}{%
{\protect \APACyear {2006}}%
}]{%
gupta2006}
\APACinsertmetastar {%
gupta2006}%
\begin{APACrefauthors}%
Gupta, A\BPBI K.%
\BCBT {}\ \BBA {} Xu, J.%
\end{APACrefauthors}%
\unskip\
\newblock
\APACrefYearMonthDay{2006}{Mar}{01}.
\newblock
{\BBOQ}\APACrefatitle {On Some Tests of the Covariance Matrix Under General
  Conditions} {On some tests of the covariance matrix under general
  conditions}.{\BBCQ}
\newblock
\APACjournalVolNumPages{Annals of the Institute of Statistical
  Mathematics}{58}{1}{101--114}.
\newblock
\begin{APACrefURL} \url{https://doi.org/10.1007/s10463-005-0010-z}
  \end{APACrefURL}
\newblock
\begin{APACrefDOI} \doi{10.1007/s10463-005-0010-z} \end{APACrefDOI}
\PrintBackRefs{\CurrentBib}

\bibitem [\protect \citeauthoryear {%
Jennrich%
}{%
Jennrich%
}{%
{\protect \APACyear {1970}}%
}]{%
jennrich1970}
\APACinsertmetastar {%
jennrich1970}%
\begin{APACrefauthors}%
Jennrich, R\BPBI I.%
\end{APACrefauthors}%
\unskip\
\newblock
\APACrefYearMonthDay{1970}{}{}.
\newblock
{\BBOQ}\APACrefatitle {An Asymptotic chi square Test for the Equality of Two
  Correlation Matrices} {An asymptotic chi square test for the equality of two
  correlation matrices}.{\BBCQ}
\newblock
\APACjournalVolNumPages{Journal of the American Statistical
  Association}{65}{330}{904-912}.
\newblock
\begin{APACrefURL} \url{https://doi.org/10.1080/01621459.1970.10481133}
  \end{APACrefURL}
\newblock
\begin{APACrefDOI} \doi{10.1080/01621459.1970.10481133} \end{APACrefDOI}
\PrintBackRefs{\CurrentBib}

\bibitem [\protect \citeauthoryear {%
Joereskog%
}{%
Joereskog%
}{%
{\protect \APACyear {1978}}%
}]{%
joereskog}
\APACinsertmetastar {%
joereskog}%
\begin{APACrefauthors}%
Joereskog, K.%
\end{APACrefauthors}%
\unskip\
\newblock
\APACrefYearMonthDay{1978}{}{}.
\newblock
{\BBOQ}\APACrefatitle {Structural analysis of covariance and correlation
  matrices} {Structural analysis of covariance and correlation
  matrices}.{\BBCQ}
\newblock
\APACjournalVolNumPages{Psychometrika}{43}{4}{443-477}.
\newblock
\begin{APACrefURL}
  \url{https://EconPapers.repec.org/RePEc:spr:psycho:v:43:y:1978:i:4:p:443-477}
  \end{APACrefURL}
\PrintBackRefs{\CurrentBib}

\bibitem [\protect \citeauthoryear {%
Konietschke%
, Bösiger%
, Brunner%
\BCBL {}\ \BBA {} Hothorn%
}{%
Konietschke%
\ \protect \BOthers {.}}{%
{\protect \APACyear {2013}}%
}]{%
konietschke2013}
\APACinsertmetastar {%
konietschke2013}%
\begin{APACrefauthors}%
Konietschke, F.%
, Bösiger, S.%
, Brunner, E.%
\BCBL {}\ \BBA {} Hothorn, L\BPBI A.%
\end{APACrefauthors}%
\unskip\
\newblock
\APACrefYearMonthDay{2013}{}{}.
\newblock
{\BBOQ}\APACrefatitle {Are Multiple Contrast Tests Superior to the ANOVA?} {Are
  multiple contrast tests superior to the anova?}{\BBCQ}
\newblock
\APACjournalVolNumPages{The International Journal of
  Biostatistics}{9}{1}{63--73}.
\newblock
\begin{APACrefURL} [{2023-06-07}]\url{https://doi.org/10.1515/ijb-2012-0020}
  \end{APACrefURL}
\newblock
\begin{APACrefDOI} \doi{doi:10.1515/ijb-2012-0020} \end{APACrefDOI}
\PrintBackRefs{\CurrentBib}

\bibitem [\protect \citeauthoryear {%
Magnus%
\ \BBA {} Neudecker%
}{%
Magnus%
\ \BBA {} Neudecker%
}{%
{\protect \APACyear {1980}}%
}]{%
magnus1980}
\APACinsertmetastar {%
magnus1980}%
\begin{APACrefauthors}%
Magnus, J.%
\BCBT {}\ \BBA {} Neudecker, H.%
\end{APACrefauthors}%
\unskip\
\newblock
\APACrefYearMonthDay{1980}{}{}.
\newblock
\APACrefbtitle {The elimination matrix: Some lemmas and applications} {The
  elimination matrix: Some lemmas and applications}\ \APACbVolEdTR {}{Other
  publications TiSEM}.
\newblock
\APACaddressInstitution{}{Tilburg University, School of Economics and
  Management}.
\newblock
\begin{APACrefURL}
  \url{https://EconPapers.repec.org/RePEc:tiu:tiutis:0e3315d3-846c-4bc5-928e-f9f025fa05b5}
  \end{APACrefURL}
\PrintBackRefs{\CurrentBib}

\bibitem [\protect \citeauthoryear {%
Nel%
}{%
Nel%
}{%
{\protect \APACyear {1985}}%
}]{%
nel1985}
\APACinsertmetastar {%
nel1985}%
\begin{APACrefauthors}%
Nel, D.%
\end{APACrefauthors}%
\unskip\
\newblock
\APACrefYearMonthDay{1985}{}{}.
\newblock
{\BBOQ}\APACrefatitle {A matrix derivation of the asymptotic covariance matrix
  of sample correlation coefficients} {A matrix derivation of the asymptotic
  covariance matrix of sample correlation coefficients}.{\BBCQ}
\newblock
\APACjournalVolNumPages{Linear Algebra and its Applications}{67}{}{137-145}.
\newblock
\begin{APACrefURL}
  \url{http://www.sciencedirect.com/science/article/pii/0024379585901910}
  \end{APACrefURL}
\newblock
\begin{APACrefDOI} \doi{https://doi.org/10.1016/0024-3795(85)90191-0}
  \end{APACrefDOI}
\PrintBackRefs{\CurrentBib}

\bibitem [\protect \citeauthoryear {%
Nowak%
\ \BBA {} Konietschke%
}{%
Nowak%
\ \BBA {} Konietschke%
}{%
{\protect \APACyear {2021}}%
}]{%
nowak2021}
\APACinsertmetastar {%
nowak2021}%
\begin{APACrefauthors}%
Nowak, C\BPBI P.%
\BCBT {}\ \BBA {} Konietschke, F.%
\end{APACrefauthors}%
\unskip\
\newblock
\APACrefYearMonthDay{2021}{}{}.
\newblock
{\BBOQ}\APACrefatitle {Simultaneous inference for Kendall’s tau}
  {Simultaneous inference for kendall’s tau}.{\BBCQ}
\newblock
\APACjournalVolNumPages{Journal of Multivariate Analysis}{185}{}{104767}.
\newblock
\begin{APACrefURL}
  \url{https://www.sciencedirect.com/science/article/pii/S0047259X21000452}
  \end{APACrefURL}
\newblock
\begin{APACrefDOI} \doi{https://doi.org/10.1016/j.jmva.2021.104767}
  \end{APACrefDOI}
\PrintBackRefs{\CurrentBib}

\bibitem [\protect \citeauthoryear {%
Omelka%
\ \BBA {} Pauly%
}{%
Omelka%
\ \BBA {} Pauly%
}{%
{\protect \APACyear {2012}}%
}]{%
omPa:2012}
\APACinsertmetastar {%
omPa:2012}%
\begin{APACrefauthors}%
Omelka, M.%
\BCBT {}\ \BBA {} Pauly, M.%
\end{APACrefauthors}%
\unskip\
\newblock
\APACrefYearMonthDay{2012}{}{}.
\newblock
{\BBOQ}\APACrefatitle {Testing equality of correlation coefficients in two
  populations via permutation methods} {Testing equality of correlation
  coefficients in two populations via permutation methods}.{\BBCQ}
\newblock
\APACjournalVolNumPages{Journal of Statistical Planning and
  Inference}{142}{6}{1396--1406}.
\PrintBackRefs{\CurrentBib}

\bibitem [\protect \citeauthoryear {%
Perreault%
, Nešlehová%
\BCBL {}\ \BBA {} Duchesne%
}{%
Perreault%
\ \protect \BOthers {.}}{%
{\protect \APACyear {2022}}%
}]{%
perreault2022}
\APACinsertmetastar {%
perreault2022}%
\begin{APACrefauthors}%
Perreault, S.%
, Nešlehová, J\BPBI G.%
\BCBL {}\ \BBA {} Duchesne, T.%
\end{APACrefauthors}%
\unskip\
\newblock
\APACrefYearMonthDay{2022}{}{}.
\newblock
{\BBOQ}\APACrefatitle {Hypothesis Tests for Structured Rank Correlation
  Matrices} {Hypothesis tests for structured rank correlation matrices}.{\BBCQ}
\newblock
\APACjournalVolNumPages{Journal of the American Statistical
  Association}{0}{0}{1-12}.
\newblock
\begin{APACrefURL} \url{https://doi.org/10.1080/01621459.2022.2096619}
  \end{APACrefURL}
\newblock
\begin{APACrefDOI} \doi{10.1080/01621459.2022.2096619} \end{APACrefDOI}
\PrintBackRefs{\CurrentBib}

\bibitem [\protect \citeauthoryear {%
Revelle%
}{%
Revelle%
}{%
{\protect \APACyear {2019}}%
}]{%
psych}
\APACinsertmetastar {%
psych}%
\begin{APACrefauthors}%
Revelle, W.%
\end{APACrefauthors}%
\unskip\
\newblock
\APACrefYearMonthDay{2019}{}{}.
\newblock
{\BBOQ}\APACrefatitle {psych: Procedures for Psychological, Psychometric, and
  Personality Research} {psych: Procedures for psychological, psychometric, and
  personality research}{\BBCQ}\ [\bibcomputersoftwaremanual].
\newblock
\APACaddressPublisher{Evanston, Illinois}{}.
\newblock
\begin{APACrefURL} \url{https://CRAN.R-project.org/package=psych}
  \end{APACrefURL}
\newblock
\APACrefnote{R package version 1.9.12}
\PrintBackRefs{\CurrentBib}

\bibitem [\protect \citeauthoryear {%
Sakaori%
}{%
Sakaori%
}{%
{\protect \APACyear {2002}}%
}]{%
sakaori2002}
\APACinsertmetastar {%
sakaori2002}%
\begin{APACrefauthors}%
Sakaori, F.%
\end{APACrefauthors}%
\unskip\
\newblock
\APACrefYearMonthDay{2002}{01}{}.
\newblock
{\BBOQ}\APACrefatitle {Permutation test for equality of correlation
  coefficients in two populations} {Permutation test for equality of
  correlation coefficients in two populations}.{\BBCQ}
\newblock
\APACjournalVolNumPages{Communications in Statistics-simulation and
  Computation}{31}{}{641-651}.
\newblock
\begin{APACrefDOI} \doi{10.1081/SAC-120004317} \end{APACrefDOI}
\PrintBackRefs{\CurrentBib}

\bibitem [\protect \citeauthoryear {%
Sattler%
, Bathke%
\BCBL {}\ \BBA {} Pauly%
}{%
Sattler%
\ \protect \BOthers {.}}{%
{\protect \APACyear {2022}}%
}]{%
sattler2022}
\APACinsertmetastar {%
sattler2022}%
\begin{APACrefauthors}%
Sattler, P.%
, Bathke, A\BPBI C.%
\BCBL {}\ \BBA {} Pauly, M.%
\end{APACrefauthors}%
\unskip\
\newblock
\APACrefYearMonthDay{2022}{}{}.
\newblock
{\BBOQ}\APACrefatitle {Testing hypotheses about covariance matrices in general
  MANOVA designs} {Testing hypotheses about covariance matrices in general
  manova designs}.{\BBCQ}
\newblock
\APACjournalVolNumPages{Journal of Statistical Planning and
  Inference}{219}{}{134-146}.
\newblock
\begin{APACrefURL}
  \url{https://www.sciencedirect.com/science/article/pii/S0378375821001269}
  \end{APACrefURL}
\newblock
\begin{APACrefDOI} \doi{https://doi.org/10.1016/j.jspi.2021.12.001}
  \end{APACrefDOI}
\PrintBackRefs{\CurrentBib}

\bibitem [\protect \citeauthoryear {%
Staffen%
\ \protect \BOthers {.}}{%
Staffen%
\ \protect \BOthers {.}}{%
{\protect \APACyear {2014}}%
}]{%
staffen2014}
\APACinsertmetastar {%
staffen2014}%
\begin{APACrefauthors}%
Staffen, W.%
, Strobl, N.%
, Zauner, H.%
, Höller, Y.%
, Dobesberger, J.%
\BCBL {}\ \BBA {} Trinka, E.%
\end{APACrefauthors}%
\unskip\
\newblock
\APACrefYearMonthDay{2014}{}{}.
\newblock
{\BBOQ}\APACrefatitle {Combining SPECT and EEG analysis for assessment of
  disorders with amnestic symptoms to enhance accuracy in early diagnostics}
  {Combining spect and eeg analysis for assessment of disorders with amnestic
  symptoms to enhance accuracy in early diagnostics}.{\BBCQ}
\newblock
\APACjournalVolNumPages{Poster A19 Presented at the 11th Annual Meeting of the
  Austrian Society of Neurology, 26–29 March, Salzburg, Austria}{}{}{}.
\PrintBackRefs{\CurrentBib}

\bibitem [\protect \citeauthoryear {%
Steiger%
}{%
Steiger%
}{%
{\protect \APACyear {1980}}%
}]{%
steiger1980}
\APACinsertmetastar {%
steiger1980}%
\begin{APACrefauthors}%
Steiger, J\BPBI H.%
\end{APACrefauthors}%
\unskip\
\newblock
\APACrefYearMonthDay{1980}{}{}.
\newblock
{\BBOQ}\APACrefatitle {Testing Pattern Hypotheses On Correlation Matrices:
  Alternative Statistics And Some Empirical Results} {Testing pattern
  hypotheses on correlation matrices: Alternative statistics and some empirical
  results}.{\BBCQ}
\newblock
\APACjournalVolNumPages{Multivariate Behavioral Research}{15}{3}{335-352}.
\newblock
\APACrefnote{PMID: 26794186}
\newblock
\begin{APACrefDOI} \doi{10.1207/s15327906mbr1503_7} \end{APACrefDOI}
\PrintBackRefs{\CurrentBib}

\bibitem [\protect \citeauthoryear {%
Tian%
\ \BBA {} Wilding%
}{%
Tian%
\ \BBA {} Wilding%
}{%
{\protect \APACyear {2008}}%
}]{%
tian2008}
\APACinsertmetastar {%
tian2008}%
\begin{APACrefauthors}%
Tian, L.%
\BCBT {}\ \BBA {} Wilding, G\BPBI E.%
\end{APACrefauthors}%
\unskip\
\newblock
\APACrefYearMonthDay{2008}{}{}.
\newblock
{\BBOQ}\APACrefatitle {Confidence interval estimation of a common correlation
  coefficient} {Confidence interval estimation of a common correlation
  coefficient}.{\BBCQ}
\newblock
\APACjournalVolNumPages{Computational statistics \& data
  analysis}{52}{10}{4872--4877}.
\PrintBackRefs{\CurrentBib}

\bibitem [\protect \citeauthoryear {%
Umlauft%
, Placzek%
, Konietschke%
\BCBL {}\ \BBA {} Pauly%
}{%
Umlauft%
\ \protect \BOthers {.}}{%
{\protect \APACyear {2019}}%
}]{%
umlauft2019}
\APACinsertmetastar {%
umlauft2019}%
\begin{APACrefauthors}%
Umlauft, M.%
, Placzek, M.%
, Konietschke, F.%
\BCBL {}\ \BBA {} Pauly, M.%
\end{APACrefauthors}%
\unskip\
\newblock
\APACrefYearMonthDay{2019}{}{}.
\newblock
{\BBOQ}\APACrefatitle {Wild bootstrapping rank-based procedures: Multiple
  testing in nonparametric factorial repeated measures designs} {Wild
  bootstrapping rank-based procedures: Multiple testing in nonparametric
  factorial repeated measures designs}.{\BBCQ}
\newblock
\APACjournalVolNumPages{Journal of Multivariate Analysis}{171}{}{176-192}.
\newblock
\begin{APACrefURL}
  \url{https://www.sciencedirect.com/science/article/pii/S0047259X18301271}
  \end{APACrefURL}
\newblock
\begin{APACrefDOI} \doi{https://doi.org/10.1016/j.jmva.2018.12.005}
  \end{APACrefDOI}
\PrintBackRefs{\CurrentBib}

\bibitem [\protect \citeauthoryear {%
van~der Vaart%
\ \BBA {} Wellner%
}{%
van~der Vaart%
\ \BBA {} Wellner%
}{%
{\protect \APACyear {1996}}%
}]{%
vanderVaart1996}
\APACinsertmetastar {%
vanderVaart1996}%
\begin{APACrefauthors}%
van~der Vaart, A\BPBI W.%
\BCBT {}\ \BBA {} Wellner, J\BPBI A.%
\end{APACrefauthors}%
\unskip\
\newblock
\APACrefYearMonthDay{1996}{}{}.
\newblock
{\BBOQ}\APACrefatitle {Weak Convergence} {Weak convergence}.{\BBCQ}
\newblock
\BIn{} \APACrefbtitle {Weak Convergence and Empirical Processes: With
  Applications to Statistics} {Weak convergence and empirical processes: With
  applications to statistics}\ (\BPGS\ 16--28).
\newblock
\APACaddressPublisher{New York, NY}{Springer New York}.
\newblock
\begin{APACrefURL} \url{https://doi.org/10.1007/978-1-4757-2545-2_3}
  \end{APACrefURL}
\newblock
\begin{APACrefDOI} \doi{10.1007/978-1-4757-2545-2_3} \end{APACrefDOI}
\PrintBackRefs{\CurrentBib}

\bibitem [\protect \citeauthoryear {%
Welz%
, Doebler%
\BCBL {}\ \BBA {} Pauly%
}{%
Welz%
\ \protect \BOthers {.}}{%
{\protect \APACyear {2022}}%
}]{%
welz2022}
\APACinsertmetastar {%
welz2022}%
\begin{APACrefauthors}%
Welz, T.%
, Doebler, P.%
\BCBL {}\ \BBA {} Pauly, M.%
\end{APACrefauthors}%
\unskip\
\newblock
\APACrefYearMonthDay{2022}{}{}.
\newblock
{\BBOQ}\APACrefatitle {Fisher transformation based confidence intervals of
  correlations in fixed-and random-effects meta-analysis} {Fisher
  transformation based confidence intervals of correlations in fixed-and
  random-effects meta-analysis}.{\BBCQ}
\newblock
\APACjournalVolNumPages{British Journal of Mathematical and Statistical
  Psychology}{75}{1}{1--22}.
\PrintBackRefs{\CurrentBib}

\bibitem [\protect \citeauthoryear {%
Wilks%
}{%
Wilks%
}{%
{\protect \APACyear {1946}}%
}]{%
wilks1946}
\APACinsertmetastar {%
wilks1946}%
\begin{APACrefauthors}%
Wilks, S\BPBI S.%
\end{APACrefauthors}%
\unskip\
\newblock
\APACrefYearMonthDay{1946}{}{}.
\newblock
{\BBOQ}\APACrefatitle {Sample criteria for testing equality of means, equality
  of variances, and equality of covariances in a normal multivariate
  distribution} {Sample criteria for testing equality of means, equality of
  variances, and equality of covariances in a normal multivariate
  distribution}.{\BBCQ}
\newblock
\APACjournalVolNumPages{The Annals of Mathematical
  Statistics}{17}{3}{257--281}.
\PrintBackRefs{\CurrentBib}

\bibitem [\protect \citeauthoryear {%
Wu%
, Weng%
, Wang%
, Wang%
\BCBL {}\ \BBA {} Liu%
}{%
Wu%
\ \protect \BOthers {.}}{%
{\protect \APACyear {2018}}%
}]{%
wu2018}
\APACinsertmetastar {%
wu2018}%
\begin{APACrefauthors}%
Wu, L.%
, Weng, C.%
, Wang, X.%
, Wang, K.%
\BCBL {}\ \BBA {} Liu, X.%
\end{APACrefauthors}%
\unskip\
\newblock
\APACrefYearMonthDay{2018}{}{}.
\newblock
{\BBOQ}\APACrefatitle {Test of Covariance and Correlation Matrices} {Test of
  covariance and correlation matrices}.{\BBCQ}
\newblock
\APACjournalVolNumPages{arXiv: Methodology}{}{}{}.
\PrintBackRefs{\CurrentBib}

\end{thebibliography}

\end{document}